\documentclass[11pt,epsfig,amsfonts]{amsart}
\usepackage{stmaryrd}
\usepackage{xcolor}
\usepackage{epsfig}
\usepackage{enumerate}
\usepackage{amsmath}
\usepackage{amssymb}
\usepackage{amscd}
\usepackage{graphicx}
\usepackage{lineno}
\usepackage{pstricks}
\usepackage{tocvsec2}
\usepackage{mathtools}
\allowdisplaybreaks[4]
\usepackage{setspace}

\usepackage [backref, colorlinks, citecolor=red, anchorcolor=black, linkcolor=blue]{hyperref}

\usepackage{amsfonts}
\usepackage[top=35mm, bottom=35mm, left=30mm, right=30mm]{geometry}
\usepackage{verbatim}
\usepackage[normalem]{ulem}
\usepackage{graphicx, amssymb, amsmath,amsthm}
\allowdisplaybreaks[4]
\usepackage{appendix}
 \usepackage{mathrsfs}
 \usepackage{cite}

\usepackage{setspace}
 \makeatletter
 \@namedef{subjclassname@2020}{%
 	\textup{2020} Mathematics Subject Classification}
 \makeatother

\usepackage{xcolor}

\usepackage{xcolor}

\newtheorem{definition}{Definition}[section]
\newtheorem{lemma}{Lemma}[section]
\newtheorem{theorem}{Theorem}
\newtheorem{corollary}{Corollary}[section]
\newtheorem{remark}{Remark}[section]
\newtheorem{example}{Example}[section]

\numberwithin{equation}{section}
\usepackage{epsfig}

\numberwithin{equation}{section}
\renewcommand*{\backref}[1]{}
\renewcommand*{\backrefalt}[4]{\quad \tiny
  \ifcase #1 (\textbf{NOT CITED.})%
  \or    (Cited in Section~#2.)%
  \else   (Cited in Section~#2.)%
  \fi}

\def \P{\mathbb{P}}
  \def \B{\mathcal{B}}
  \def \F{\mathcal{F}}
    
    \def \E{\mathcal{E}}
    \def \Prp{Pr_\P(\E)}
        \def \ip{\mathcal{I}_\P(\E)}
            \def \H{\mathcal{H}}
              \def \Y{\mathcal{Y}}
                \def \O{\mathcal{O}}
                  \def \L{\mathcal{L}}
                  \def \N{\mathcal{N}}
\makeatletter

\def\MRbibitem{\@ifnextchar[\my@lbibitem\my@bibitem}

\def\mybiblabel#1#2{\@biblabel{{\hyperref{http://www.ams.org/mathscinet-getitem?mr=#1}{}{}{#2}}}}

\def\myhyperanchor#1{\Hy@raisedlink{\hyper@anchorstart{cite.#1}\hyper@anchorend}}

\def\my@lbibitem[#1]#2#3#4\par{%
  \item[\mybiblabel{#2}{#1}\myhyperanchor{#3}\hfill]#4%
  \@ifundefined{ifbackrefparscan}{}{\BR@backref{#3}}%
  \if@filesw{\let\protect\noexpand\immediate
    \write\@auxout{\string\bibcite{#3}{#1}}}\fi\ignorespaces%
}

\def\my@bibitem#1#2#3\par{%
  \refstepcounter\@listctr
  \item[\mybiblabel{#1}{\the\value\@listctr}\myhyperanchor{#2}\hfill]#3%
  \@ifundefined{ifbackrefparscan}{}{\BR@backref{#2}}%
  \if@filesw\immediate\write\@auxout
    {\string\bibcite{#2}{\the\value\@listctr}}\fi\ignorespaces%
}

\makeatother

\subjclass[2020]{Primary: 37H15. Secondary: 37A35, 37C45}
\keywords{}

\author{Xue Liu}
\address[Xue Liu]
{School of Mathematics\\ Southeast University\\
 Nanjing 211189, P.R. China}
\email[X.~Liu]{xueliuseu@seu.edu.cn}

\author{Xiao Ma}
\address[Xiao Ma]
{School of Mathematical Sciences, University of Science and Technology of China, Hefei, Anhui 230026, P.R. China}
\email[X.~Ma]{xiaoma@ustc.edu.cn}

\author{Xiaomin Zhou}
\address[Xiaomin Zhou]
{School of Mathematics and Statistics, Huazhong University of Science and Technology, Wuhan, Hubei 430074, P.R. China}
\email[X.~Zhou]{zxm12@mail.ustc.edu.cn}

\begin{document}

\begin{abstract}
We study local stable and unstable sets for two-sided continuous bundle random dynamical
systems. For an ergodic invariant measure with positive fiber measure-theoretic entropy and positive fiberwise maximal Lyapunov exponent, we show that the fiber entropy is determined by the action of the random maps on local unstable sets, and establish a lower bound for the Hausdorff dimension of local unstable sets in terms of the ratio of entropy to the maximal fiber Lyapunov exponent. If the upper box dimension of the phase space is finite, we obtain a weak form of Ruelle's inequality.
\end{abstract}


\title[]{Local stable and unstable sets for random dynamical systems}
\maketitle

\parskip 10pt         

\section{Introduction}
\subsection{Background}
Random dynamical systems (RDSs) provide a natural framework for studying systems evolving in random environments, where both the admissible state space and the evolution maps may depend on the realization of the environment.
They are usually represented as skew products: a measure-preserving transformation describes the evolution of the noise, while a cocycle describes the evolution on the phase space.
This framework is particularly useful for the pathwise study of stochastic and randomly forced systems, where one wants to understand how randomness changes recurrence, instability, entropy production, and geometric complexity.
The basic framework and ergodic theory of RDSs were developed in the work of Arnold \cite{Arnold98}, Kifer \cite{Kifer1986,Kifer1988}, Liu and Qian \cite{liuqiansmooth95}, and Kifer and Liu \cite{KL06}, among others.

A central problem in dynamical systems is to relate a system's complexity to its geometric structure. Entropy quantifies the exponential growth of dynamically distinguishable orbit segments, either statistically with respect to an invariant measure or globally in the topological setting,
while Lyapunov exponents quantify the asymptotic exponential rates at which nearby trajectories separate or converge.
Their interaction is fundamental in both deterministic and random dynamics. In deterministic smooth dynamics, this relationship underlies the dimension theory of measure-preserving dynamical systems. The Kaplan--Yorke conjecture \cite{Kaplan-Yorke-79} and its subsequent development in the work of Frederickson, Kaplan, Yorke, and Yorke \cite{Frederickson1983}, Young's work on dimension, entropy, and Lyapunov exponents \cite{Young1982dim}, and the survey by Eckmann and Ruelle \cite{EckmannRuelle85} all emphasize that dimension is a fundamental quantity for describing chaotic behavior from a geometric viewpoint. In random smooth dynamics, Ledrappier and Young established entropy and dimension formulas for random transformations \cite{LedrYoung88,LedrappierYoung1988}, showing that randomness does not destroy this entropy--exponent--dimension mechanism but instead gives it a genuinely fiberwise form.

In smooth random hyperbolic theory, local stable and unstable manifolds are random smooth manifolds tangent to the stable and unstable Oseledets subspaces, respectively, and they collect points whose trajectories exhibit exponential contraction in forward or backward time along typical noise realizations.
They are central in random Pesin theory and depend on differentiability assumptions, typically $C^{1+\alpha}$ regularity of the fiber maps together with suitable integrability conditions; see \cite{Arnold98,liuqiansmooth95,Liu2001,KL06}.
Results of Ledrappier and Young \cite{LedrappierYoung1985} in the deterministic setting and Bahnm\"uller and Liu \cite{BahnmullerLiu1998} in the random setting showed that measure-theoretic entropy is determined by the action of the random maps on unstable manifolds. For merely continuous RDSs, invariant manifolds are no longer available. Nevertheless, entropy and metric expansion rates remain meaningful. This leads to the main questions of the present paper:
\begin{itemize}
  \item \textit{Is entropy still reflected in the action of random maps on suitable unstable sets? }
  \item  \textit{Moreover, how are the dimensions of local stable and unstable sets related to entropy and the rates of expansion of random orbits, quantified by suitable Lyapunov-type exponents?}
\end{itemize}

The deterministic theory suggests that stable and unstable sets retain
substantial information about positive entropy even outside differentiable settings. Blanchard, Host, and Ruette \cite{Blanchard2002} proved that positive
topological entropy forces the existence of proper asymptotic pairs; moreover,
for an ergodic invariant measure with positive entropy, almost every point
belongs to a non-trivial stable class, and in the invertible case such classes
contain uncountably many points forming Li--Yorke pairs for the inverse
dynamics. Subsequently, Huang, Xu, and Yi \cite{HXY2015} extended these results to positive
entropy $G$-systems, where  $G$ is a countable, discrete, infinite left-orderable amenable group.
Collectively, these works characterize chaotic behavior inside
closures of stable and unstable sets in positive-entropy systems \cite{Sumi2003,Huang2008,FangHuangYiZhang2012,HLY2014}.
Most directly related to our work, Feng, Gao, Huang, and Lian \cite{Fengscichina2022} proved that, for deterministic positive-entropy systems, local stable and unstable sets admit lower bounds on their Hausdorff dimensions in terms of entropy and the maximal Lyapunov exponent. These results indicate that stable and unstable sets retain substantial geometric information even when one works beyond uniformly hyperbolic settings or without a manifold structure on the phase space.

In this paper, we consider two-sided continuous bundle RDSs. 
 For every ergodic invariant measure with positive entropy, we prove that the fiber entropy is
determined by the action of the random maps on the unstable sets (see Theorem \ref{thm local entropy}). Moreover, we prove that the Hausdorff dimension of fiberwise local unstable sets is bounded below by the ratio of entropy to this exponent (see Theorem \ref{Thm main}). If the upper pointwise dimension of the conditional measure is finite, we obtain a weak form of Ruelle's inequality (see Corollary \ref{corollary ruelle}). Thus, the paper connects the entropy theory of RDSs with the metric geometry of local stable and unstable sets and extends the entropy--Lyapunov--dimension philosophy to a nonsmooth random setting.

\subsection{Settings and main results}
Let $(\Omega,\mathcal F)$ be a standard Borel space and let $\mathbb P$ be a probability measure on $(\Omega,\mathcal F)$. Let $\theta:\Omega\to\Omega$ be an invertible bimeasurable, $\mathbb P$-preserving ergodic transformation. 
Let $(X,\B)$ be a compact metric space with the Borel $\sigma$-algebra $\B$, and let $\E\subset \Omega\times X$ be a measurable random compact set; that is, $\E_\omega$ is a nonempty compact set for every $\omega\in\Omega$, and
$\E$ has measurable graph. We denote $\E_\omega:=\{x\in X:\ (\omega,x)\in \E\}$.
\begin{definition}
  A two-sided continuous bundle RDS $F$ over $(\Omega,\F,\P)$ on $\E\subset \Omega\times X$ is a family $F:=\{F_\omega^n:\E_\omega\rightarrow \E_{\theta^n\omega}\}_{\omega\in\Omega,n\in\mathbb{Z}}$ of homeomorphisms satisfying:
\begin{enumerate}
  \item $(\omega,x)\mapsto F_\omega^n(x)$ is jointly measurable from $\E$ to $X$ for all $n\in\mathbb{Z}$,
  \item $F_\omega^0=id$ for all $\omega\in\Omega$,
  \item the cocycle property: $F_\omega^{n+m}=F_{\theta^m\omega}^n\circ F_\omega^m$ for all $n,m\in\mathbb{Z}$, $\omega\in\Omega$.
\end{enumerate}
We simply denote $F_\omega^1$ by $F_\omega$. The skew-product transformation $\Theta:\E\to \E$ generated by $F$ and $\theta$ is defined by $\Theta(\omega,x)=(\theta\omega,F_\omega(x))$ and is invertible on $\E$.
\end{definition}

Continuous RDSs on compact metric spaces provide a natural topological framework for random dynamics beyond the differentiable setting.
Typical examples include stationary ergodic compositions of homeomorphisms of compact spaces,
random subshifts and other random symbolic systems, and random interval and circle homeomorphisms.
Moreover, after passing to their time-one maps, random flows generated by random differential equations and Stratonovich stochastic differential equations on compact manifolds give important classes of two-sided $C^1$ RDSs covered by our framework; see Theorems 2.2.14 and 4.2.10 for RDEs and Theorems 2.3.42 and 4.2.12 for SDEs in \cite{Arnold98}.

Let $\Prp$ denote the space of probability measures on $\Omega\times X$ with marginal $\P$ on $\Omega$ and supported on $\E$. We denote
\begin{equation}\label{eq def FE}
  \F_\E=\sigma\{(A\times X)\cap \E:\ A\in\F\}.
\end{equation}
By Rokhlin's disintegration theorem (see Lemma \ref{lemma measure disintegration}), every $\mu\in\Prp$ admits a disintegration $d\mu(\omega,x)=d\mu_\omega(x)d\P(\omega)$, where for $\P$-a.e. $\omega\in\Omega$, $\mu_\omega$ is a probability measure on the fiber $\E_\omega$, and the family $\{\mu_\omega\}_{\omega\in \Omega}$ is the collection of conditional measures with respect to the base $\sigma$-algebra $\F_\E$.
We denote by $\ip\subset\Prp$ the space of $\Theta$-invariant measures. For $\mu\in\ip$,
the invariance is equivalent to the equivariance relation
$
(F_\omega)_*\mu_{\omega}=\mu_{\theta\omega}$
for $\P$-a.e. $\omega\in\Omega$, where $(F_\omega)_{*}\mu_\omega(B)=\mu_\omega((F_\omega)^{-1}B)$ for any Borel measurable set $B\in \mathcal{B}(\E_{\theta\omega})$.
Denote by $\mathcal{I}_\P^e(\E)\subset \Prp$ the space of $\Theta$-invariant ergodic measures.

Before stating the main results, we introduce a metric expansion quantity that plays the role of the derivative growth rate in the continuous setting. The theory of {\it characteristic exponents} for random homeomorphisms of metric spaces, developed by Kifer \cite{Kifer1983LyapunovExponents, Kifer1984}, provides a fiberwise analogue of the maximal Lyapunov exponent.
For $n\geq 1$ and $\delta>0$, set
\begin{equation*}
  \lambda_n^\delta(\omega,x)=\sup_{y\in B_\omega(x,\delta,n),y\not=x}\frac{d(F_\omega^n(x),F_\omega^n(y))}{d(x,y)},
\end{equation*}
where $B_\omega(x,\delta,n)=\{y\in\E_\omega:\ \max_{0\leq i\leq n-1}d(F_\omega^i(x),F_\omega^i(y))<\delta\}.$
When $n=1$, $\lambda_1^\delta(\omega,x)$ is the local Lipschitz constant of $F_\omega$ near $x$ at scale $\delta$.
For $\delta>0$, the mapping $(\omega,x)\mapsto \lambda_n^\delta(\omega,x)$ is $(\F\otimes \B)\vert_{\E}$-measurable. Moreover, the sequence of functions $\{\log^+\lambda_n^\delta\}_{n\in\mathbb{N}}$ is subadditive with respect to the dynamical system $(\E, F)$, where $\log^{+} \lambda= \max\{0, \log \lambda\}$ for $\lambda>0$ and $\log^{+}0=0$; see Kifer \cite{Kifer1984}.
By Kingman's subadditive ergodic theorem, if $\log^+\lambda_1^\delta\in L^1(\mu)$ for some $\delta>0$, then the limit $\lim_{n\to+\infty}\frac{1}{n}\log^+\lambda_n^\delta(\omega,x)$ exists for $\mu$-a.e. $(\omega,  x)\in\E$ and the convergence holds in $L^1(\mu)$. Moreover, the resulting function is $\Theta$-invariant. Denote this limit by
        \begin{equation}\label{eq def lambda delta}
          \Lambda^\delta(\omega,x):=\lim_{n\to+\infty}\frac{1}{n}\log^+\lambda_n^\delta(\omega,x)\quad \mu\text{-a.e. } (\omega,x)\in\E.
        \end{equation}
For each \(n \geq 1\) and \((\omega,x)\in \E\), the map
\(\delta \mapsto \lambda_n^\delta(\omega,x)\) is monotone increasing in $\delta$ and bounded below by $0$.
Therefore, the  limit
$$\lambda_n(\omega,x):=\lim_{\delta\searrow 0}\lambda_n^\delta(\omega,x)$$
exists.
The sequence $\{\log^+\lambda_n\}_{n\in\mathbb{N}}$ is still subadditive, since it is the pointwise limit of subadditive sequences $\{\log^+\lambda_n^\delta\}_{n\in\mathbb N}$.
Moreover, since
$\lambda_1(\omega,x)\leq \lambda_1^{\delta^*}(\omega,x)$,
the assumption $\log^+\lambda_1^{\delta^*}\in L^1(\mu)$ for some $\delta^*>0$ implies that $\log^+\lambda_1\in L^1(\mu)$.
Hence, by Kingman's subadditive ergodic theorem,
\begin{equation}\label{eq def fiber Lyapunov exponent}
   \chi(\omega,x):=\lim_{n\to+\infty}\frac{1}{n}\log^+\lambda_n(\omega,x)
\end{equation}
exists for $\mu$-a.e. $(\omega,x)\in\E$, and the convergence holds in $L^1(\mu)$. We call $\chi(\omega,x)$ the {\bf fiberwise maximal Lyapunov exponent at $(\omega,x)$}.
In Theorem \ref{thm 3}, we show that this quantity agrees with the classical maximal Lyapunov exponent for $C^1$ RDSs under the usual integrability assumptions. When $\chi(\omega,x)$ exists $\mu$-a.e. and $\chi\in L^1(\mu)$, we define the maximal fiber Lyapunov exponent of the bundle RDS by
\begin{equation}\label{eq def lambda mu F}
  \chi_\mu(F):=\int_\E \chi(\omega,x) d\mu(\omega,x)=\lim_{n\to+\infty}\frac{1}{n}\int_\E\log^+\lambda_n(\omega,x)d\mu(\omega,x).
\end{equation}
Note that when $\mu$ is ergodic, $\chi_\mu(F)=\chi(\omega,x)$ for $\mu$-a.e. $(\omega,x)\in\E$.

We now define the fiberwise local stable and unstable sets. For a two-sided continuous bundle RDS $(\E,F)$, given $(\omega,x)\in\E$ and $\delta>0$, the $\delta$-stable set of $x$ in the fiber $\E_\omega$ is
\begin{equation*}
  W_\omega^s(x,\delta):=\{y\in\E_\omega: d(F_\omega^n(x),F_\omega^n(y))\leq \delta,\ \forall n\geq 0\mbox{ and }\lim_{n\to+\infty}d(F_\omega^n(x),F_\omega^n(y))=0\}.
\end{equation*}
Similarly, the $\delta$-unstable set of $x$ in the fiber $\E_\omega$ is
\begin{equation*}
  W_\omega^u(x,\delta):=\{y\in\E_\omega: d(F_\omega^{-n}(x),F_\omega^{-n}(y))\leq \delta,\ \forall n\geq 0\mbox{ and }\lim_{n\to+\infty}d(F_\omega^{-n}(x),F_\omega^{-n}(y))=0\}.
\end{equation*}
The following theorem establishes a lower bound for the Hausdorff dimension of local unstable sets in terms of the ratio of entropy to the maximal fiber Lyapunov exponent.
\begin{theorem}\label{Thm main}
  Let $(\E,F)$ be a two-sided continuous bundle RDS and let $\mu\in\mathcal{I}_\P^e(\E)$ with positive fiber entropy $h_\mu(F)>0$. Assume $\log^+\lambda_1^{\delta^*}\in L^1(\mu)$ for some $\delta^*>0$ and $\chi_\mu(F)\in(0,+\infty)$. Then for any $\delta>0$, we have
  \begin{equation*}
    \dim_H(W_\omega^u(x,\delta))\geq \frac{h_\mu(F)}{\chi_\mu(F)},\ \mbox{ for $\mu$-a.e. $(\omega,x)\in\E$},
  \end{equation*}
where the definition of fiber entropy $h_\mu(F)$ can be found in Section \ref{sec 3.3}.
If the assumptions on $(\E,F)$ are replaced by the corresponding assumptions on $(\E,F^{-1})$, then
\begin{equation*}
   \dim_H(W_\omega^s(x,\delta))\geq \frac{h_\mu(F^{-1})}{\chi_\mu(F^{-1})},\ \mbox{ for $\mu$-a.e. $(\omega,x)\in\E$}.
\end{equation*}
\end{theorem}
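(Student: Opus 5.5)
We follow the deterministic strategy of Feng, Gao, Huang and Lian \cite{Fengscichina2022}, adapted fiberwise, and we rely on Theorem \ref{thm local entropy}: the fiber entropy is carried by local unstable sets. The second assertion follows from the first applied to $(\E,F^{-1})$, since $W^s_\omega(x,\delta)$ for $F$ coincides with the local unstable set for $F^{-1}$. So we only treat unstable sets.

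\textbf{Step 1 (measurable partition subordinate to unstable sets).} Using Theorem \ref{thm local entropy}, we fix a measurable partition $\xi$ of $\E$, refining $\F_\E$ (i.e.\ each atom lies in one fiber), with two properties. First, for $\mu$-a.e.\ $(\omega,x)$ the atom $\xi(\omega,x)$ is contained in $W^u_\omega(x,\delta)$. Second, the conditional measures $\mu^\xi_{(\omega,x)}$ have local entropy $h_\mu(F)$:
\begin{equation*}
\liminf_{n\to\infty}-\frac1n\log\mu^\xi_{(\omega,x)}\bigl(B_\omega(x,\epsilon,n)\bigr)\ge h_\mu(F)-\eta
\end{equation*}
for small $\epsilon$, any given $\eta>0$, and $\mu$-a.e.\ point. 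If the earlier result only gives the entropy identity $h_\mu(F)=H_\mu(\Theta^{-1}\xi\mid\xi)$, we obtain this pointwise statement by a conditional Shannon--McMillan--Breiman argument along atoms of $\xi$, in the spirit of Ledrappier--Young. Since the conditional measure lives on $\xi(\omega,x)\subset W^u_\omega(x,\delta)$, the mass distribution principle reduces the claim to the following: for $\mu^\xi_{(\omega,x)}$-a.e.\ $y$,
\begin{equation*}
\liminf_{r\to0}\frac{\log\mu^\xi_{(\omega,x)}(B(y,r))}{\log r}\ge \frac{h_\mu(F)-\eta}{\chi_\mu(F)+\eta}.
\end{equation*}

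\textbf{Step 2 (comparing metric balls with Bowen balls).} This is the main obstacle. We want to show that a small metric ball $B(y,r)$ is contained in a Bowen ball $B_\omega(y,\epsilon,n)$ with $n\approx \log(\epsilon/r)/(\chi_\mu(F)+\eta)$. For this we use $\lambda_n$ and its subadditivity. By Kingman's theorem, for a.e.\ $y$ and all $n\ge N(y)$ we have $\lambda_n(\omega,y)\le e^{n(\chi+\eta)}$. However, $\lambda_n$ is a limit as $\delta\to0$, so it controls only infinitesimally small neighborhoods, and these may shrink with $n$.

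We therefore work at a fixed scale $\delta_0$ with $\Lambda^{\delta_0}$. We choose $\delta_0$ small so that $\int\log^+\lambda_1^{\delta_0}\,d\mu\le\int\log^+\lambda_1\,d\mu+\eta$, by monotone convergence and the integrability hypothesis. More generally, we choose $\delta_0$ so that $\frac1k\int\log^+\lambda_k^{\delta_0}\le\chi_\mu+\eta$ for some fixed $k$. Applying Birkhoff's theorem to $\Theta^k$, together with the chain rule for Lipschitz constants along the orbit, we get: whenever $d(F^i_\omega y,F^i_\omega z)<\delta_0$ for all $i<n$,
\begin{equation*}
d(F^n_\omega y,F^n_\omega z)\le C(y)e^{n(\chi_\mu+2\eta)}d(y,z).
\end{equation*}
Ergodicity of $\mu$ is used here, and so is the fact that the Birkhoff sums of $\log^+\lambda_k^{\delta_0}\circ\Theta^{jk}$ bound $\log^+\lambda^{\delta_0}_n$ by subadditivity. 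An induction on $n$ then shows that if $d(y,z)<r$ with $r\le \delta_0 C(y)^{-1}e^{-n(\chi_\mu+2\eta)}$, then $z\in B_\omega(y,\delta_0,n)$.

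\textbf{Step 3 (conclusion).} On a set of large conditional measure we can make $C(y)$ and the entropy convergence time uniform, by Egorov's theorem. For $y$ in this set, Steps 1 and 2 give
\begin{equation*}
\mu^\xi(B(y,r))\le\mu^\xi\bigl(B_\omega(y,\delta_0,n(r))\bigr)\le e^{-n(r)(h-\eta)},
\end{equation*}
which yields the local dimension bound above. Here we need $\delta_0\le\epsilon$ to be a valid scale for the local entropy estimate, which we arrange by shrinking $\delta_0$ first. We must also replace the base point $x$ by nearby $y$ in the same atom; this is harmless because the local entropy statement holds for a.e.\ point of each atom.

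Finally, we apply the mass distribution principle to the large-measure set inside $W^u_\omega(x,\delta)$ and let $\eta\to0$. This gives $\dim_H W^u_\omega(x,\delta)\ge h_\mu(F)/\chi_\mu(F)$ for $\mu$-a.e.\ $(\omega,x)$, for every $\delta>0$. For larger $\delta$ the unstable sets only grow, so it suffices to treat small $\delta$.
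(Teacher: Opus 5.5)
Your proposal is correct and follows the paper's proof. Theorem \ref{thm local entropy} supplies the partition subordinate to $W^u_\omega(x,\delta)$ together with the conditional local entropy; a fixed-scale inclusion $B_\omega(y,r)\subset B_\omega(y,\delta_0,n)$ with $r\approx e^{-n(\chi_\mu(F)+\eta)}$ converts this into a lower bound on the lower pointwise dimension of $(\mu_\omega)_x$; and the mass distribution principle concludes.

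The differences are minor. First, you derive $\Lambda^{\delta_0}\le\chi_\mu(F)+\eta$ directly from $\chi_\mu(F)=\inf_k\frac1k\int\log^+\lambda_k\,d\mu$ and dominated convergence in $\delta$, where the paper cites Lemma \ref{corollary interchangeable limits}. This is cleanest via Kingman's constant limit $\Lambda^{\delta_0}=\inf_n\frac1n\int\log^+\lambda_n^{\delta_0}\,d\mu$, since $\Theta^k$ need not be ergodic. Second, your $C(y)$ replaces the continuity constant $N_0$ of Lemma \ref{lemma embed ball into bowen ball}. Third, Theorem \ref{thm local entropy} gives no uniform scale $\epsilon$, so rather than invoking Egorov it is simpler to argue pointwise in $y$ and let $\delta_m\to0$, using that $\Lambda^{\delta}$ is monotone in $\delta$.
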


The main ingredient in the proof of Theorem \ref{Thm main} is the measurable partition constructed in Theorem \ref{thm local entropy}, which is subordinate to the fiberwise local unstable sets. This partition has the following properties: after the base $\sigma$-algebra $\F_\E$ is adjoined (see \eqref{eq def FE}), the corresponding conditional measures are concentrated on local unstable sets, and their fiberwise local entropies coincide with $h_\mu(F)$. This partition plays a role analogous to that of partitions subordinate to unstable manifolds; see Section 3 in \cite{LedrappierYoung1985} for the deterministic case and \cite[Proposition 3.7]{BahnmullerLiu1998} for the random setting.

\begin{theorem}\label{thm local entropy}
  Let $(\E,F)$ be a two-sided continuous bundle RDS, and let $\mu\in \mathcal{I}_\P^e(\E)$ with $h_\mu(F)>0$. Then for any $\delta>0$, there exists a countably generated measurable partition $\xi$ of $\E$ with the following properties:
  \begin{enumerate}
    \item $\Theta\xi\prec \xi$, $\overline{\xi_\omega(x)}\subset W_\omega^u(x,\delta)$ for each $x\in\E_\omega$, where $\xi_\omega(x)=\{y\in\E_\omega:\ (\omega,y)\in \xi(\omega,x)\}$ and $\xi(\omega,x)$ is the atom of $\xi$ containing $(\omega,x)$,
    \item Let $(\mu_\omega)_x$ denote the disintegration of $\mu$ over $\sigma(\xi\vee\F_\E)$, defined on the $\mu$-full measure set provided by Corollary \ref{corollary measure disintegration}. Then, for $\mu$-a.e. $(\omega,x)\in\E$, one has
        \begin{equation*}
          \underbar{h}_{(\mu_\omega)_x}(F,\omega,y)=\overline{h}_{(\mu_\omega)_x}(F,\omega,y)=h_\mu(F),\ \mbox{ for }(\mu_\omega)_x\mbox{-a.e. }y\in \E_\omega,
        \end{equation*}
where $\underbar{h}_{(\mu_\omega)_x}(F,\omega,y)$ and $\overline{h}_{(\mu_\omega)_x}(F,\omega,y)$ are the fiber lower and upper local entropies, respectively, of $F$ at $y\in \E_{\omega}$. The precise definitions of local entropies are given in Definition \ref{def local entropy}.
  \end{enumerate}
\end{theorem}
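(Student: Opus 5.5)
The plan follows the Ledrappier--Young / Bahnm\"uller--Liu construction of partitions subordinate to unstable manifolds, with the smooth geometry replaced by the topological Pinsker-type arguments of Blanchard--Host--Ruette and Huang--Xu--Yi. Here the excellent partition is made out of past dynamics.

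\textbf{Step 1: an initial partition.} Fix $\delta>0$. Choose a finite Borel partition $P=\{P_1,\dots,P_k\}$ of $X$ with $\operatorname{diam}P_i<\delta/2$. We require $\mu(\partial P\cap\E)=0$ in the fiberwise sense; this can be arranged because only countably many radii around a fixed finite set carry boundary mass. Pull $P$ back to a partition $\mathcal P=\{(\Omega\times P_i)\cap\E\}$ of $\E$ and set
\begin{equation*}
\xi:=\bigvee_{n\ge 0}\Theta^{n}\mathcal P .
\end{equation*}
Then $\Theta\xi\prec\xi$ holds automatically.

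An atom $\xi_\omega(x)$ consists of points whose backward orbits stay in the same $P$-elements as that of $x$. Hence $d(F^{-n}_\omega x,F^{-n}_\omega y)\le\delta/2$ for all $n\ge0$. To also get $d(F_\omega^{-n}x,F_\omega^{-n}y)\to0$, I would not use $\mathcal P$ directly but a refining sequence. Take partitions $P^{(j)}$ with $\operatorname{diam}P^{(j)}<\delta 2^{-j}$, and set
\begin{equation*}
\xi=\bigvee_{j}\bigvee_{n\ge N_j(\omega,x)}\Theta^{n}\mathcal P^{(j)},
\end{equation*}
with measurable times $N_j$ chosen via Borel--Cantelli. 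Equivalently, take $\eta=\bigvee_j\Theta^{N_j}\mathcal P^{(j)}$ with $\sum_j\mu(\cdot)$-type control, and set $\xi=\bigvee_{n\ge0}\Theta^n\eta$. If $N_j$ grows fast enough, $H_\mu(\eta\mid\F_\E)<\infty$ still holds. Then the backward distances tend to $0$, and the closure is still contained in $W^u_\omega(x,\delta)$ because the condition $\le\delta$ is closed. Whether the limit condition survives closure needs care. I would handle it by requiring a uniform rate on the atom, i.e.\ the $j$-th refinement is respected from time $N_j$ on for the whole atom, which is a closed condition.

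\textbf{Step 2: entropy of $\xi$ equals $h_\mu(F)$.} Since $\eta$ is generating modulo $\F_\E$ (diameters go to $0$), the standard identity gives
\begin{equation*}
h_\mu(F)=h_\mu(F,\eta)=H_\mu(\xi\mid\Theta\xi\vee\F_\E).
\end{equation*}
This uses the relative Pinsker/Rokhlin theory for bundle RDS (Kifer, Liu). The positivity $h_\mu(F)>0$ ensures atoms are not trivial. I would also invoke the random Pinsker partition: by a Rokhlin--Sinai argument, taking $\xi$ from a past partition makes the tail $\bigcap_n\Theta^{n}\xi\vee\F_\E$ equal to the relative Pinsker $\sigma$-algebra. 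This does not affect the computation below.

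\textbf{Step 3: local entropy of conditional measures.} For $(\mu_\omega)_x$-a.e.\ $y$ we must show
\begin{equation*}
-\frac1n\log(\mu_\omega)_x\big(B_\omega(y,\varepsilon,n)\big)\to h_\mu(F),
\end{equation*}
in the lim inf/lim sup sense of Definition \ref{def local entropy}, after letting $\varepsilon\to0$.

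For the upper bound, I would use the relative Shannon--McMillan--Breiman theorem for $\Theta^{-1}\xi$ relative to $\xi\vee\F_\E$. It gives that $-\frac1n\log(\mu_\omega)_x\big((\Theta^{-n}\xi)(y)\big)\to H_\mu(\xi\mid\Theta\xi\vee\F_\E)=h_\mu(F)$. Each element of $\Theta^{-n}\xi$ inside a $\xi$-atom has small Bowen diameter, since forward iterates up to $n$ lie in small $P^{(j)}$-pieces. A Mañé-type trick with $\mu$-boundary-small partitions then bounds the number of $\Theta^{-n}\xi$-atoms meeting a Bowen ball by $e^{o(n)}$ (for $\varepsilon$ small), which gives $\underline h\ge h_\mu(F)$.

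For the reverse inequality $\overline h\le h_\mu(F)$: a Bowen ball $B_\omega(y,\varepsilon,n)$ with $\varepsilon$ smaller than the Lebesgue-type scale of $P$ contains the $\Theta^{-n}\mathcal P$-atom on a set of large measure. Quantitatively this uses $\mu(\partial_\varepsilon P)$ small, plus Birkhoff along orbits, so its measure is at least $e^{-n(h+\gamma)}$ by the same SMB.

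\textbf{Main obstacle.} I expect the main obstacle to be Step 3's lower bound $\underline h\ge h_\mu(F)$. It requires controlling how many atoms of $\Theta^{-n}\xi$ meet a Bowen ball with respect to a conditional measure rather than $\mu$ itself. I would try Mañé's lemma, which produces a measurable function $\rho$ with $\log\rho\in L^1$ such that the partition into $\rho$-balls has finite entropy. Combined with Birkhoff averages along the forward orbit and the ergodic theorem applied fiberwise via the disintegration of Corollary \ref{corollary measure disintegration}, this should yield the $e^{o(n)}$ count.
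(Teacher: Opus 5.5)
\paragraph{Verdict.} The proposal has a genuine gap in Part (2): the construction of $\xi$ does not guarantee that the conditional measures on $\xi$-atoms carry the full entropy $h_\mu(F)$, and the lower-bound counting in Step 3 does not work as set up.

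\paragraph{Part (1).} With constant delays $N_j$ and $N_1=0$, your Part (1) is fine. It matches the paper's argument: constant $k_j$, uniformly shrinking diameters, and closure handled via $F_\omega^{-n}\overline{\xi_\omega(x)}=\overline{F_\omega^{-n}\xi_\omega(x)}$.

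\paragraph{Step 2: the entropy identity is unjustified.}
\begin{itemize}
\item $H_\mu(\eta\mid\F_\E)<\infty$ cannot hold, however fast $N_j$ grows. By invariance and $\Theta^{-1}\F_\E=\F_\E$ one has $H_\mu(\Theta^{N_j}\mathcal P^{(j)}\mid\F_\E)=H_\mu(\mathcal P^{(j)}\mid\F_\E)$. This tends to $\infty$ because $h_\mu(F)>0$ forces the fiber measures $\mu_\omega$ to be non-atomic. So the finite-entropy identity $h_\mu(F,\eta)=H_\mu(\eta\mid\eta^-\vee\F_\E)$ is unavailable.
\item Generation alone does not give $H_\mu(\xi\mid\Theta\xi\vee\F_\E)=h_\mu(F)$. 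With $N_j\equiv 0$, $\xi$ is the fiberwise point partition, $H_\mu(\xi\mid\Theta\xi\vee\F_\E)=0$, and the conditional measures are Dirac masses with zero local entropy.
\item The tail $\bigcap_n\bigl(\sigma(\Theta^n\xi)\vee\F_\E\bigr)$ is not automatically the relative Pinsker algebra. In the Rokhlin--Sinai construction, that is exactly what a careful choice of delays buys.
\end{itemize}
The missing idea is an \emph{entropy-driven} inductive choice of the delays. The paper chooses $k_q$ after $k_1,\dots,k_{q-1}$, using Lemma \ref{lemma three of entropy}(3), so that \eqref{eq h-h<1/p2q-p} holds. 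Summing gives $H_\mu(\alpha_p\mid\xi\vee\F_\E)>h_\mu(F,\alpha_p)-1/p$, and hence $\sup_pH_\mu(\alpha_p\mid\H_p)=h_\mu(F)$ (Lemma \ref{lemma entropy approach by alpha and H}). Your Borel--Cantelli/diameter criterion for the $N_j$ does not give this. Without it, the local entropies of $(\mu_\omega)_x$ can be strictly smaller than $h_\mu(F)$.

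\paragraph{Step 3: the lower-bound counting fails.} Since $\Theta^{-i}\eta=\bigvee_j\Theta^{N_j-i}\mathcal P^{(j)}$, inside a $\xi$-atom the partition $\Theta^{-n}\xi$ records $P^{(j)}$-itineraries at past times $N_j-i$ for \emph{all} $j$. That is arbitrarily fine backward information, which a forward Bowen ball does not constrain. So there is no $e^{o(n)}$ bound on the number of $\Theta^{-n}\xi$-atoms meeting $B_\omega(y,\varepsilon,n)$.

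The paper never uses $H_\mu(\xi\mid\Theta\xi\vee\F_\E)$. Instead it:
\begin{itemize}
\item applies the local SMB (Lemma \ref{lemma random smb theorem}, Corollary \ref{corollary random smb}) to the \emph{finite} partitions $\alpha_p$ relative to $\xi\vee\F_\E$, whose limit is $H_\mu(\alpha_p\mid\H_p)$ with $\H_p$ the tail algebra;
\item runs Katok's small-boundary counting on $\alpha_p$-names (Lemma \ref{lemma lower bound for local entropy}).
\end{itemize}
The entropy preservation from Step 2 is what makes these rates approach $h_\mu(F)$.

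\paragraph{The inequality $\overline h\le h_\mu(F)$.} Your argument here has the inclusion backwards. A lower bound on the Bowen-ball mass needs $n$-names of a partition of diameter less than $\varepsilon$ to lie \emph{inside} $B_\omega(y,\varepsilon,n)$. The paper gets this from $\beta_p\prec\Theta^{-k_p}\alpha_p$, using Lemma \ref{lemma huang} to absorb the shift $k_p$. Taking $\varepsilon$ below the Lebesgue scale of $P$ is the regime for the opposite inequality, not this one.
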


 The proof of Theorem \ref{Thm main} then follows the entropy-dimension strategy. The fiberwise maximal Lyapunov exponent compares ordinary balls in the fibers with fiberwise Bowen balls. Theorem \ref{thm local entropy} identifies the exponential decay of the conditional measures of Bowen balls with $h_\mu(F)$. Combining these estimates yields a lower bound for the lower pointwise dimension of the conditional measures, and the mass distribution principle gives the desired Hausdorff dimension estimate.

 We remark that, in Theorem \ref{Thm main}, positive entropy $h_\mu(F)$ does not rule out the possibility of $\chi_\mu(F)=0$; see Example 4.7 in \cite{Fengscichina2022}, which shows that entropy can be generated by complicated small-scale geometry without any positive metric expansion.  In smooth finite-dimensional dynamics, positive entropy implies the existence of positive Lyapunov exponents through the Margulis--Ruelle inequality.
 Nevertheless, the following theorem shows that if the conditional fiber measures have finite upper pointwise dimension and the fiberwise maximal Lyapunov exponent is zero, then $h_\mu(F)=0$.
 We also note that if the packing dimension (or the upper box dimension) of $X$ is finite, then condition \eqref{eq finite pointwise dimension} in the following theorem holds automatically; see Proposition 1.1 in \cite{carvalhoCondori2021}.
 \begin{theorem}\label{theorem 4}
   Let $(\E,F)$ be a continuous bundle RDS and let $\mu\in\mathcal{I}_\P^e(\E)$. Suppose that $\log^+\lambda_1^{\delta^*}\in L^1(\mu)$ for some $\delta^*>0$ and $\chi_\mu(F)\in[0,+\infty)$.  If
   \begin{equation}\label{eq finite pointwise dimension}
     \overline{d}_{\mu_\omega}(x):=\limsup_{r\searrow 0}\frac{\log \mu_\omega(B_\omega(x,r))}{\log r}<+\infty,\mbox{ for $\mu$-a.e. $(\omega,x)\in\E$},
   \end{equation}
   where $B_\omega(x,r):=\{y\in\E_\omega:\ d(x,y)<r\}$, then
   \begin{equation*}
     h_\mu(F)\leq \overline{d}_{\mu_\omega}(x)\cdot \chi_\mu(F),\mbox{ for $\mu$-a.e. $(\omega,x)\in\E$}.
   \end{equation*}
   In particular, under the finite upper pointwise dimension assumption \eqref{eq finite pointwise dimension}, $\chi_\mu(F)=0$ implies $h_\mu(F)=0$.
 \end{theorem}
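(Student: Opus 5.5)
The plan is to compare ordinary metric balls with fiberwise Bowen balls using the exponent $\chi_\mu(F)$. Then we feed the result into a fiberwise Brin--Katok type local entropy formula for $\mu_\omega$, which gives $\lim_{n}-\frac1n\log\mu_\omega(B_\omega(x,\rho,n))$ in terms of $h_\mu(F)$. Fix $\varepsilon>0$ and write $\chi=\chi_\mu(F)$. Since $\mu$ is ergodic, $\chi(\omega,x)=\chi$ for $\mu$-a.e. $(\omega,x)$. Also $\lambda_n^\rho\searrow\lambda_n$ as $\rho\searrow0$, with $\log^+\lambda_1^{\delta^*}\in L^1(\mu)$.

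\emph{Step 1 (choice of scale).} By monotone convergence applied to the subadditive sequences, $\Lambda^\rho$ decreases to $\chi$ as $\rho\searrow0$. I will check this via the $L^1$ limits $\inf_n\frac1n\int\log^+\lambda_n^\rho\,d\mu$, exchanging the two infima. So I fix $\rho\in(0,\delta^*)$ with $\Lambda^\rho\le\chi+\varepsilon$ $\mu$-a.e.

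By Kingman, for $\mu$-a.e. $(\omega,x)$ there is $N(\omega,x)$ such that $\lambda_n^\rho(\omega,x)\le e^{n(\chi+2\varepsilon)}$ for $n\ge N$. I also want the estimate uniformly along the orbit. If $y\in B_\omega(x,r)$ with $r$ tiny, then for $y\in B_\omega(x,\rho,n)$ we have $d(F^n_\omega x,F^n_\omega y)\le \lambda_n^\rho(\omega,x)\,r$. I will show by induction on $k$ that if $r\le \rho e^{-n(\chi+2\varepsilon)}/C(\omega,x)$, then $y\in B_\omega(x,\rho,n)$. Here $C(\omega,x)=\max_{k<N}\lambda_k^\rho(\omega,x)\vee1$. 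At each step $k$ the point $y$ already lies in $B_\omega(x,\rho,k)$, so $d(F^k x,F^k y)\le\lambda^\rho_k r<\rho$.

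The subtle point is that $\lambda_k^\rho$ is a supremum only over $B_\omega(x,\rho,k)$, so continuity is needed to pass from $k$ to $k+1$. The $(k+1)$-th coordinate is controlled because $F_\omega^k$ is a homeomorphism, so small $r$ keeps $F^k y$ near $F^k x$; together with the estimate $\lambda_{k+1}^\rho r<\rho$ this closes the induction. Hence
$$B_\omega(x,r_n)\subset B_\omega(x,\rho,n),\qquad r_n:=C^{-1}\rho\, e^{-n(\chi+2\varepsilon)}.$$

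\emph{Step 2 (local entropy).} Next I need a fiber Brin--Katok statement. For ergodic $\mu$ and $\mu$-a.e. $(\omega,x)$,
$$\liminf_{\rho\to0}\liminf_n -\tfrac1n\log\mu_\omega(B_\omega(x,\rho,n))=h_\mu(F).$$
This is presumably among the local entropy results of Section 3 (Definition \ref{def local entropy}); I will assume it is available. The catch is that I need the bound at the fixed $\rho$ from Step 1, and that $\rho$ must also be small enough for Brin--Katok. Both requirements are monotone in the right direction: shrinking $\rho$ keeps $\Lambda^\rho\le\chi+\varepsilon$. So I choose $\rho$ small enough for both, obtaining $\liminf_n-\frac1n\log\mu_\omega(B_\omega(x,\rho,n))\ge h_\mu(F)-\varepsilon$. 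If $h_\mu(F)=\infty$, I replace it by an arbitrary large constant; the hypothesis $\overline d<\infty$ then forces $\chi>0$, or a contradiction.

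\emph{Step 3 (conclusion).} For $r$ small, choose $n$ with $r_{n+1}< r\le r_n$. Then
$$\mu_\omega(B_\omega(x,r))\le\mu_\omega(B_\omega(x,\rho,n))\le e^{-n(h_\mu(F)-\varepsilon)},$$
while $\log r\ge -(n+1)(\chi+2\varepsilon)+\log(\rho/C)$. Dividing the logarithms gives
$$\overline d_{\mu_\omega}(x)\ \ge\ \liminf_{r\to0}\frac{\log\mu_\omega(B_\omega(x,r))}{\log r}\ \ge\ \frac{h_\mu(F)-\varepsilon}{\chi+2\varepsilon}.$$
Letting $\varepsilon\to0$ yields $h_\mu(F)\le\overline d_{\mu_\omega}(x)\chi$. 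If $\chi=0$, the same inequality reads $\overline d\ge (h-\varepsilon)/(2\varepsilon)\to\infty$ unless $h=0$; together with \eqref{eq finite pointwise dimension} this gives the final claim.

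The main obstacle is Step 1. The inclusion of metric balls into Bowen balls must hold using only the local, scale-dependent quantity $\lambda_n^\rho$, and $\Lambda^\rho$ must approach $\chi$ as $\rho\to0$. The inductive containment argument has to respect the restriction of the supremum to $B_\omega(x,\rho,k)$.
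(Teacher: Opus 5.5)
Your proof is correct, but it runs the key estimate in the opposite direction from the paper. Both arguments rest on the same two ingredients: the inclusion $B_\omega(x,r)\subset B_\omega(x,\rho,n)$ for $r\approx e^{-n(\chi_\mu(F)+\varepsilon)}$, and Zhu's fiber Brin--Katok formula for $\mu_\omega$.

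\textbf{The paper's route.} It uses the hypothesis quantitatively. For $D>\overline{d}_{\mu_\omega}(x)$ one has $\mu_\omega(B_\omega(x,\rho_*))\ge\rho_*^D$, so the inclusion gives a \emph{lower} bound on the mass of Bowen balls. This bounds the \emph{upper} local entropy directly: $h_\mu(F)\le D\eta$, with no case distinction.

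\textbf{Your route.} You use the \emph{lower} local entropy to bound $\mu_\omega(B_\omega(x,r))$ from \emph{above}. This is essentially the proof of Theorem~\ref{Thm main} with $\mu_\omega$ in place of $(\mu_\omega)_x$, and it yields $\underline{d}_{\mu_\omega}(x)\ge h_\mu(F)/\chi_\mu(F)$. That is a stronger statement, namely $h_\mu(F)\le\underline{d}_{\mu_\omega}(x)\,\chi_\mu(F)$. It also shows that finiteness of $\overline{d}_{\mu_\omega}$ is needed only to rule out the degenerate cases $\chi_\mu(F)=0<h_\mu(F)$ and $h_\mu(F)=\infty$.

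Your Step~1 derivation of $\Lambda^\rho\searrow\chi_\mu(F)$ exchanges $\inf_\rho$ and $\inf_n$ of $\frac1n\int\log^+\lambda_n^\rho\,d\mu$, using monotone convergence dominated by $\log^+\lambda_n^{\delta^*}$. In the ergodic case this is a valid and more elementary substitute for the appendix lemma.

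\textbf{Small remarks.}
\begin{enumerate}
\item The induction needs no continuity. For $y\in B_\omega(x,\rho,k)$ you already have $d(F_\omega^kx,F_\omega^ky)\le\lambda_k^\rho(\omega,x)\,d(x,y)$, which is exactly the next coordinate.
\item You should justify $C(\omega,x)<\infty$. For example, use $\lambda_k^\rho\le\prod_{i<k}\lambda_1^\rho\circ\Theta^i$ together with $\lambda_1^{\delta^*}<\infty$ a.e.; alternatively, use the paper's continuity argument for the first $N$ steps.
\item Your final $\rho$ depends on $(\omega,x)$, so draw it from a fixed countable sequence. This lets you intersect the Kingman full-measure sets.
\end{enumerate}
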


As a corollary of Theorems \ref{Thm main} and \ref{theorem 4}, we obtain the following Ruelle-type inequality.
\begin{corollary}\label{corollary ruelle}
  Let $(\E,F)$ be a two-sided continuous bundle RDS and let $\mu\in\mathcal{I}_\P^e(\E)$. Assume $\log^+\lambda_1^{\delta^*}\in L^1(\mu)$ for some $\delta^*>0$. If $\overline{dim}_B(X)<+\infty$, then for any $\delta>0$, we have
  \begin{equation*}
    h_\mu(F)\leq \dim_H(W_\omega^u(x,\delta))\cdot \chi_\mu(F),\ \mbox{ for $\mu$-a.e. $(\omega,x)\in\E$},
  \end{equation*}
where $\overline{\dim}_{\mathrm B}(X)$ is the upper box dimension of $X$.
\end{corollary}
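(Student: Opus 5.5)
We derive the corollary from Theorems \ref{Thm main} and \ref{theorem 4} by splitting into cases according to the sizes of $h_\mu(F)$ and $\chi_\mu(F)$.

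First, the hypotheses are well posed. Since $\log^+\lambda_1^{\delta^*}\in L^1(\mu)$, the discussion before \eqref{eq def fiber Lyapunov exponent} shows that $\chi(\omega,x)$ exists $\mu$-a.e. and lies in $L^1(\mu)$. By ergodicity it equals the constant $\chi_\mu(F)\in[0,+\infty)$.

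\textbf{Case $h_\mu(F)=0$.} The inequality is trivial, because the right-hand side is nonnegative. This holds with the convention $0\cdot\infty=0$ in case the Hausdorff dimension is infinite, although under $\overline{\dim}_B(X)<\infty$ it is finite anyway.

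\textbf{Case $h_\mu(F)>0$.} We first verify condition \eqref{eq finite pointwise dimension} from the box-dimension assumption. For this we invoke Proposition 1.1 of \cite{carvalhoCondori2021}, as indicated before Theorem \ref{theorem 4}: for any Borel probability measure $\nu$ on a metric space $Y$, one has $\overline{d}_\nu(x)\le \dim_P(Y)\le\overline{\dim}_B(Y)$ for $\nu$-a.e. $x$. Apply this to $\nu=\mu_\omega$ on $Y=\E_\omega\subset X$. Upper box dimension is monotone under inclusion, so $\overline{\dim}_B(\E_\omega)\le\overline{\dim}_B(X)<\infty$. Since $\mu_\omega(\E_\omega)=1$, Fubini gives \eqref{eq finite pointwise dimension} for $\mu$-a.e. $(\omega,x)$.

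Now Theorem \ref{theorem 4} yields $0<h_\mu(F)\le \overline{d}_{\mu_\omega}(x)\,\chi_\mu(F)$ for $\mu$-a.e. $(\omega,x)$. Hence $\chi_\mu(F)>0$, and combined with finiteness we get $\chi_\mu(F)\in(0,+\infty)$. All hypotheses of Theorem \ref{Thm main} are then satisfied. Note that Theorem \ref{Thm main} requires a two-sided system, which we have here, while Theorem \ref{theorem 4} needs only the one-sided structure. Theorem \ref{Thm main} gives, for every $\delta>0$,
\[
\dim_H(W^u_\omega(x,\delta))\ge \frac{h_\mu(F)}{\chi_\mu(F)}
\]
for $\mu$-a.e. $(\omega,x)$. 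Multiplying by $\chi_\mu(F)>0$ gives the claim.

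\textbf{Main obstacle.} The only nontrivial point is the step that rules out $\chi_\mu(F)=0$ when the entropy is positive; this is exactly what Theorem \ref{theorem 4} supplies. That in turn depends on correctly transferring the box-dimension bound on $X$ to an a.e. bound on the upper pointwise dimension of the fiber measures $\mu_\omega$. If one prefers to avoid citing \cite{carvalhoCondori2021}, the standard argument proceeds as follows. Cover $X$ by $N(r)\le r^{-(D+\epsilon)}$ balls of radius $r$, where $D=\overline{\dim}_B(X)$, taking $r=2^{-k}$. The $\mu_\omega$-measure of the union of those balls with $\mu_\omega$-mass below $r^{D+2\epsilon}$ is at most $r^{\epsilon}$, which is summable in $k$. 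Borel–Cantelli then gives $\overline{d}_{\mu_\omega}(x)\le D+2\epsilon$ for $\mu_\omega$-a.e. $x$, using that the ball $B_\omega(x,2r)$ contains the covering ball of radius $r$ around some center near $x$.
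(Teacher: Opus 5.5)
Your proposal is correct and follows the route the paper intends. The paper gives no separate proof beyond calling this a corollary of Theorems \ref{Thm main} and \ref{theorem 4}, together with its remark that finite upper box dimension yields \eqref{eq finite pointwise dimension} via \cite[Proposition 1.1]{carvalhoCondori2021}. Your argument fills this in: the case $h_\mu(F)=0$ is trivial, and when $h_\mu(F)>0$, Theorem \ref{theorem 4} forces $\chi_\mu(F)>0$, so Theorem \ref{Thm main} applies. Your Borel--Cantelli argument is a valid self-contained substitute for that citation.
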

 The rest of the paper is organized as follows. Section \ref{section example} presents stationary RDSs generated by precompact families of homeomorphisms, including nonlinear examples on $\mathbb{T}^2$ with a positive fiberwise maximal Lyapunov exponent. Section \ref{sec preliminary lemma} collects preliminaries on conditional entropy, measure disintegration, fiber measure-theoretic entropy, and a local Shannon--McMillan--Breiman theorem for RDSs. Sections \ref{section proof thm 2}, \ref{section proof of thm main}, and \ref{sec thm 4} prove Theorems \ref{thm local entropy}, \ref{Thm main}, and \ref{theorem 4}, respectively. Section \ref{section Lyapunov exponents} proves that the fiberwise maximal Lyapunov exponent coincides with the classical maximal Lyapunov exponent for $C^1$ RDSs on smooth manifolds when the classical maximal Lyapunov exponent is positive, under suitable integrability assumptions.

\section{A concrete example with positive fiberwise maximal Lyapunov exponent}\label{section example}
The following standard facts can be found in \cite{Munkres}. Denote by $C(X,X)$ the space of continuous maps from $X$ to $X$ equipped with the compact-open topology $\mathcal{T}_{c.o.}$, which makes $C(X,X)$ a Polish space. The compact-open topology on $C(X,X)$ is generated by the subbasis
\begin{equation*}
  \mathcal{S}_{c.o.}=\{S(K,U)|\ K\subset X\mbox{ compact, }U\subset X\mbox{ open}\},
\end{equation*}
where $S(K,U)=\{f\in C(X,X):\ f(K)\subset U\}.$ 
Let $\operatorname{Homeo}(X,X)$ denote the Borel subset of $C(X,X)$ whose elements are homeomorphisms from $X$ to $X$. Given any Borel subset $\mathcal{U}\subset \operatorname{Homeo}(X,X)$, for instance, a subset of finite cardinality, we set
\begin{equation*}
  \Omega=\prod_{-\infty}^{+\infty}\mathcal{U}
\end{equation*}
equipped with the product $\sigma$-algebra. Then $\Omega$, endowed with the product $\sigma$-algebra, is a standard Lebesgue space.
Denote by $\theta$ the left shift, and let $\P$ be any $\theta$-invariant ergodic Borel probability measure. For each $\omega=(\dots,g_{-1}(\omega),g_0(\omega),g_1(\omega),\dots)$, where $\{g_i(\omega)\}_{i\in\mathbb{Z}}\subset \mathcal{U}$, we define
\begin{equation}\label{eqn-Fn}
  F_\omega^n=\begin{cases}
               g_{n-1}(\omega)\circ \cdots \circ g_0(\omega), & \mbox{if } n>0; \\
               id, & \mbox{if } n=0; \\
               g_n(\omega)^{-1}\circ \cdots \circ g_{-1}(\omega)^{-1}, & \mbox{if } n<0.
             \end{cases}
\end{equation}
Then $F$ defines a two-sided continuous RDS on $X$.

\begin{example}[A concrete example with positive fiberwise maximal Lyapunov exponent]\label{exmaple 1}
  Let $X=\mathbb{T}^2$ be equipped with the standard metric. Assume that $\psi_i:\mathbb{T}^1\to\mathbb{R}$ for $i\in\mathbb{Z}$ are Lipschitz continuous functions satisfying
   \begin{equation}\label{eq def L}
     L:=\sup\{Lip(\psi_i):\ i\in\mathbb{Z}\}\in(0,\frac{1}{2}).
   \end{equation}
We define the shear homeomorphisms $H_i\in \operatorname{Homeo}(\mathbb{T}^2,\mathbb{T}^2)$ for $i\in\mathbb{Z}$ by
  \begin{equation*}
    H_i(x,y)=(x,y+\psi_i(x)\ (\mbox{mod }1)),\ \forall (x,y)\in\mathbb{T}^2.
  \end{equation*}
Define $f_i:=H_i\circ T_A$ to be the composition of two homeomorphisms, where $T_A$ is the classical Arnold cat map, i.e.,
  \begin{equation*}
  T_A: \mathbb{T}^2\to \mathbb{T}^2,\quad
  \begin{pmatrix} x\\ y \end{pmatrix}\mapsto\begin{pmatrix}
        2 & 1 \\
        1 & 1
      \end{pmatrix} \begin{pmatrix} x\\ y \end{pmatrix};\end{equation*}
      and
    \begin{equation*} f_i(x,y)=(2x+y \ (\mbox{mod }1),x+y+\psi_i(2x+y) \ (\mbox{mod }1)),\ \forall (x,y)\in\mathbb{T}^2.
  \end{equation*}
Let $F$ be the two-sided continuous RDS generated as above by $\mathcal{U}=\{f_i:\ i\in\mathbb{Z}\}$.
It is straightforward to check that $\log^+\lambda_1^{\delta^*}\leq \log(\frac{3}{2}\cdot \frac{3+\sqrt{5}}{2})$ for any $\delta^*\in (0,1)$, where $\frac32$ is an upper bound for the Lipschitz constants of the maps $H_i$ and $\frac{3+\sqrt{5}}{2}$ is the maximal eigenvalue of $T_A$. Therefore, $\log^+\lambda_1^{\delta^*}\in L^1(\mu)$, and the fiberwise maximal Lyapunov exponent exists $\mu$-a.e. and in $L^1(\mu)$.

\begin{lemma}\label{lemma in example 1}
For any $\mu\in \mathcal{I}_\P^e(\Omega\times\mathbb{T}^2)$, the fiberwise maximal Lyapunov exponent in Example \ref{exmaple 1} satisfies
$$
\chi(\omega,z)\geq
\log\left(\frac{2}{\sqrt{1+(L+1)^2}}\right)>0,
\quad\mbox{for $\mu$-a.e. }(\omega,z)\in\Omega\times\mathbb{T}^2.
$$
\end{lemma}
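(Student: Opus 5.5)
The idea is to exhibit, for every $n$ and every $(\omega,z)$, points $y$ arbitrarily close to $z$ such that $d(F_\omega^n z,F_\omega^n y)/d(z,y)\ge c^n$, where $c:=2/\sqrt{1+(L+1)^2}$. This gives $\lambda_n^\delta(\omega,z)\ge c^n$ for every $\delta>0$, hence $\lambda_n(\omega,z)\ge c^n$, and therefore $\chi(\omega,z)=\lim\frac1n\log^+\lambda_n\ge\log c$ wherever the limit exists, in particular $\mu$-a.e. Positivity of $\log c$ is elementary: $L<\tfrac12$ gives $1+(L+1)^2<\tfrac{13}{4}<4$. The mechanism is an invariant cone of displacement vectors, handled directly on lifted displacements since the maps are only Lipschitz.

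\emph{Step 1 (one-step cone estimate).} Lift $f_i$ to $\mathbb{R}^2$ as $(x,y)\mapsto(2x+y,\,x+y+\psi_i(2x+y))$. For two nearby points with lifted displacement $w=(u,v)$, the image displacement is $w'=(u',v')$ with
\[
u'=2u+v,\qquad v'=u+v+\Delta,\qquad |\Delta|\le L|u'|,
\]
where the bound on $\Delta$ uses the Lipschitz property of $\psi_i$ on $\mathbb{T}^1$ for small displacements. Consider the cone $K=\{(u,v):u\neq0,\ 0\le v/u\le L+1\}$.

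If $w\in K$, then $u$ and $v$ have the same sign, so $|u'|=2|u|+|v|\ge2|u|$.

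For the invariance $w'\in K$, take $u>0$ (the case $u<0$ is symmetric). The lower bound is
\[
v'\ge (1-2L)u+(1-L)v\ge0,
\]
using $L<\tfrac12$. The upper bound is
\[
v'\le u+v+Lu'\le(1+L)u',
\]
since $u+v\le u'$.

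For the norm ratio: from $\|w'\|\ge|u'|\ge2|u|$ and $\|w\|\le\sqrt{1+(L+1)^2}\,|u|$ we get $\|w'\|/\|w\|\ge c$.

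\emph{Step 2 (iteration).} Fix $n$, $\delta>0$ and $(\omega,z)$. Take $y=z+t(1,0)$, so the initial displacement $(t,0)$ lies in $K$. Each $f_i$ is globally Lipschitz with constant at most $M=\tfrac32\cdot\tfrac{3+\sqrt5}{2}$. Choosing $t$ small enough that $M^n t$ is below $\delta$ and below a fixed fraction of the injectivity scale of $\mathbb{T}^2$ ensures two things:
\begin{itemize}
\item all lifted displacements $w_k$ of $F_\omega^k z,F_\omega^k y$ ($0\le k\le n$) have norm equal to the torus distance;
\item $y\in B_\omega(z,\delta,n)$.
\end{itemize}
Applying Step 1 inductively with $f_{g_k(\omega)}$ at step $k$ gives $w_k\in K$ for all $k$. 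Telescoping, $\|w_n\|/\|w_0\|=\prod_k\|w_{k+1}\|/\|w_k\|\ge c^n$, which is the desired bound.

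\emph{Main obstacle.} The delicate part is the cone invariance with only Lipschitz control of $\psi_i$. The error $\Delta$ must be bounded by $L|u'|$, measured in the \emph{new} horizontal coordinate $u'$, and the cone must be one-sided ($0\le v/u$) so that $|2u+v|\ge2|u|$. This is where $L<\tfrac12$ is used. A secondary bookkeeping point is to work with small lifted displacements, so that the mod-$1$ reductions and the Lipschitz bound for $\psi_i$ on $\mathbb{T}^1$ apply with Euclidean differences. This is handled by choosing $t$ depending on $n$, which is allowed because $\lambda_n^\delta$ is a supremum over the Bowen ball.
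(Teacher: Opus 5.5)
Your proposal is correct and follows essentially the same route as the paper. It uses the same cone $\{0\le v/u\le L+1\}$, the same one-step invariance argument via $L<\tfrac12$ and $|\Delta|\le L|u'|$, the same expansion factor $c$ from $|u'|\ge 2|u|$ and $\|w\|\le\sqrt{1+(L+1)^2}\,|u|$, and the global Lipschitz constant to keep the first $n$ iterates inside the Bowen ball and below the injectivity scale. The only cosmetic difference is that you follow a single test point $y=z+t(1,0)$ instead of pushing forward the whole sector $\operatorname{Cone}_K(z,r/\Lambda^n)$, which is enough because $\lambda_n^\delta$ is a supremum.
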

\begin{proof}[Proof of Lemma \ref{lemma in example 1}]
The proof is based on a metric version of the cone criterion.
We choose $r\in(0,1/4)$ such that for any $z\in\mathbb T^2$ and any fixed lift
$\widetilde z\in\mathbb R^2$ of $z$, if $d(z,z')<r$, then there exists a unique lift
$\widetilde z'\in\mathbb R^2$ of $z'$ satisfying
$$
\|\widetilde z'-\widetilde z\|<r.
$$
For any $z\in\mathbb T^2$ and $0<s<r_0$, define the cone sector
$$
\operatorname{Cone}_K(z,s)
:=
\{\pi(\widetilde z+v):v\in K,\ \|v\|<s\},
$$
where $\pi:\mathbb R^2\to\mathbb T^2$ is the canonical projection
and $\widetilde z$ is any lift of $z$.

Write
$$
\widetilde\psi_i:=\psi_i\circ\pi_1,
$$
where $\pi_1:\mathbb R\to\mathbb T^1$ is the canonical projection,
and use the natural lift
$$
\widetilde f_i(u,v)
=
\bigl(2u+v,u+v+\widetilde\psi_i(2u+v)\bigr)
$$
of $f_i$. Since the natural lift
$
\widetilde H_i(u,v)
=
\bigl(u,v+\widetilde\psi_i(u)\bigr)
$
satisfies
$
\operatorname{Lip}(\widetilde H_i)
\leq1+L<\frac32
$
and the operator norm of
$\left(\begin{smallmatrix}2&1\\1&1\end{smallmatrix}\right)$ is
$(3+\sqrt5)/2$, every $\widetilde f_i$ is
$\Lambda$-Lipschitz, where
$
\Lambda:=
\frac32\cdot\frac{3+\sqrt5}{2}>1.
$

We first prove the cone invariance. We claim that for each
$i\in\mathbb Z$, each $z=(x,y)\in\mathbb T^2$, and every
$0<\rho<r_0$,
\begin{equation}\label{eq:cone-invariance}
f_i\bigl(\operatorname{Cone}_K(z,\rho/\Lambda)\bigr)
\subset
\operatorname{Cone}_K(f_i(z),\rho).
\end{equation}
Indeed, let
$$
z'=(x',y')\in\operatorname{Cone}_K(z,\rho/\Lambda).
$$
Choose lifts so that
$$
\widetilde z'-\widetilde z
=(\Delta x,\Delta y)\in K,
\qquad
\|\widetilde z'-\widetilde z\|<\rho/\Lambda.
$$
Writing
$\widetilde z=(\widetilde x,\widetilde y)$ and
$\widetilde z'=(\widetilde x',\widetilde y')$, 
we have
$$
\widetilde f_i(\widetilde z')
-\widetilde f_i(\widetilde z)
=
(2\Delta x+\Delta y,\Delta x+\Delta y+\widetilde\psi_i(2\widetilde x'+\widetilde y')
-
\widetilde\psi_i(2\widetilde x+\widetilde y))
$$
and
$$
|\widetilde\psi_i(2\widetilde x'+\widetilde y')
-
\widetilde\psi_i(2\widetilde x+\widetilde y)|\leq L(2\Delta x+\Delta y).
$$
Consequently,
$$
\Delta x+\Delta y+\widetilde\psi_i(2\widetilde x'+\widetilde y')
-
\widetilde\psi_i(2\widetilde x+\widetilde y)
\geq
(1-2L)\Delta x+(1-L)\Delta y
\geq0,
$$
where the last inequality follows from $L<1/2$.
On the other hand,
$$
\begin{aligned}
\Delta x+\Delta y+\widetilde\psi_i(2\widetilde x'+\widetilde y')
-
\widetilde\psi_i(2\widetilde x+\widetilde y)
\leq
\Delta x+\Delta y+L(2\Delta x+\Delta y)
\leq
(L+1)(2\Delta x+\Delta y).
\end{aligned}
$$
Thus
$$
\widetilde f_i(\widetilde z')
-\widetilde f_i(\widetilde z)\in K.
$$
Moreover,
$$
\|\widetilde f_i(\widetilde z')
-\widetilde f_i(\widetilde z)\|
\leq
\Lambda\|\widetilde z'-\widetilde z\|
<\rho.
$$
Therefore, \eqref{eq:cone-invariance} holds.

Next, we establish the expansion estimate inside the cone. For every
$0<\rho<r_0$ and every
$
z'\in
\operatorname{Cone}_K(z,\rho/\Lambda)\setminus\{z\},
$
we have
\begin{equation}\label{eq:cone-expansion}
\frac{d(f_i(z'),f_i(z))}{d(z',z)}
\geq
\frac{2\Delta x+\Delta y}
{\sqrt{(\Delta x)^2+(\Delta y)^2}}
\geq
\frac{2}{\sqrt{1+(L+1)^2}}
=:c.
\end{equation}
Indeed,
$$
\|\widetilde z'-\widetilde z\|
<\rho/\Lambda<r_0
\text{ and }
\|\widetilde f_i(\widetilde z')
-\widetilde f_i(\widetilde z)\|
<\rho<r_0.
$$
Therefore, the input and output displacements are the unique short
lifts, and hence
$$
d(z',z)=\|\widetilde z'-\widetilde z\| \text{ and }
d(f_i(z'),f_i(z))
=
\|\widetilde f_i(\widetilde z')
-\widetilde f_i(\widetilde z)\|.
$$
Since $z'\neq z$ and
$(\Delta x,\Delta y)\in K$, we have $\Delta x>0$. Moreover,
$$
\sqrt{(\Delta x)^2+(\Delta y)^2}
\leq
\sqrt{1+(L+1)^2}\,\Delta x
\text{ and }
2\Delta x+\Delta y\geq2\Delta x.
$$
This proves \eqref{eq:cone-expansion}. Since $L\in(0,1/2)$,
we have $c>1$.

Now fix $(\omega,z)\in\Omega\times\mathbb T^2$ such that
$\chi(\omega,z)$ is well-defined, and fix $\delta>0$. Choose
$$
0<r<\min\{\delta,r_0\}.
$$
For a fixed $n\geq1$, put
$$
\rho_k:=
\frac{r}{\Lambda^{n-k-1}},
\qquad 0\leq k\leq n-1.
$$
Then
$$
0<\rho_k\leq r<r_0
\qquad\text{and}\qquad
\frac{\rho_k}{\Lambda}
=
\frac{r}{\Lambda^{n-k}}.
$$
Repeatedly applying \eqref{eq:cone-invariance}, we obtain
$$
F_\omega^k
\bigl(\operatorname{Cone}_K(z,r/\Lambda^n)\bigr)
\subset
\operatorname{Cone}_K
\bigl(F_\omega^k(z),r/\Lambda^{n-k}\bigr),
\qquad
0\leq k\leq n,
$$
where $F^k_{\omega}$ is defined in \eqref{eqn-Fn}.
For $0\leq k\leq n-1$, every point in the latter cone lies at
distance less than $r<\delta$ from $F_\omega^k(z)$. Hence
$$
\operatorname{Cone}_K(z,r/\Lambda^n)
\subset
B_\omega(z,\delta,n).
$$
For any
$
z'\in
\operatorname{Cone}_K(z,r/\Lambda^n)\setminus\{z\},
$
applying \eqref{eq:cone-expansion} with $\rho=\rho_k$ at the
$k$th step gives
$$
\frac{d(F_\omega^n(z),F_\omega^n(z'))}{d(z,z')}
\geq c^n.
$$
Consequently,
$$
\lambda_n^\delta(\omega,z)\geq c^n \text{ for every $n\geq1$.}
$$
 Since $\delta>0$ was arbitrary, letting
$\delta\searrow0$ in the definition of $\lambda_n(\omega,z)$ gives
$$
\lambda_n(\omega,z)
=
\lim_{\delta\searrow0}\lambda_n^\delta(\omega,z)
\geq c^n.
$$
Therefore,
$$
\begin{aligned}
\chi(\omega,z)
&=
\lim_{n\to+\infty}
\frac1n\log^+\lambda_n(\omega,z)\geq
\log c
=\log\left(
\frac{2}{\sqrt{1+(L+1)^2}}\right)>0.
\end{aligned}
$$
Since $\chi(\omega,z)$ is defined for $\mu$-a.e.
$(\omega,z)$, the conclusion follows.
\end{proof}

\end{example}

\section{Preliminaries}\label{sec preliminary lemma}

\subsection{Measure-theoretic entropy}
In this section, we introduce the notation and lemmas on measure-theoretic entropy that will be used below. Let $(Y,\mathcal{Y},\nu)$ be a measure space, and let $\alpha=\{A_j\}_{j\in J}$ be a finite or countable measurable partition. Let $\mathcal{C}\subset \mathcal{Y}$ be a $\sigma$-algebra. The conditional information function and the conditional entropy of $\alpha$ with respect to $\mathcal{C}$ are defined, respectively, by
\begin{equation}\label{eq def of information}
  I_\nu(\alpha|\mathcal{C})(x)=-\sum_{A\in\alpha}1_A(x)\log E_\nu(1_A|\mathcal{C})(x)
\end{equation}
and
\begin{equation*}
  H_\nu(\alpha|\mathcal{C}):=\int_Y I_\nu(\alpha|\mathcal{C})d\nu=\sum_{A\in\mathcal{\alpha}}\int_Y-E_\nu(1_A|\mathcal{C})\log E_\nu(1_A|\mathcal{C})d\nu,
\end{equation*}
where $E_\nu(1_A|\mathcal{C})$ is the conditional expectation of $1_A$ with respect to $\mathcal{C}$. For a measurable partition $\beta$, let $\sigma(\beta)$ be the $\sigma$-algebra generated by $\beta$.
\begin{remark}
  When convenient, we will use $\beta$ to denote $\sigma(\beta)$; thus, we write $I_\nu(\alpha|\beta) $ and $H_\nu(\alpha|\beta)$ for $I_\nu(\alpha|\sigma(\beta)) $ and $H_\nu(\alpha|\sigma(\beta))$, respectively, and $\alpha\vee \mathcal{C} $ for $\sigma(\alpha)\vee \mathcal{C}$, etc.
\end{remark}

We collect several necessary facts in the following lemma, which can be found in \cite[Proposition 14.16, Lemma 14.27, and Theorem 14.28]{Glasner} except for (2).
\begin{lemma}\label{lemma four useful lemma}
 Let $(Y,\mathcal{Y},\nu)$ be a probability space. For countable measurable partitions $\alpha$, $\beta,\ \gamma$, and a $\sigma$-algebra $\mathcal{C}\subset\mathcal{Y}$, one has
  \begin{enumerate}
    \item The information cocycle equation: $I_\nu(\alpha\vee \beta|\mathcal{C})(y)=I_\nu(\alpha|\mathcal{C})(y)+I_\nu(\beta|\alpha\vee \mathcal{C})(y)$ for $\nu$-a.e. $y\in Y$.
    \item If $T:(Y,\mathcal{Y})\to (Y,\mathcal{Y})$ is a measurable mapping, then $I_\nu(T^{-1}\alpha|T^{-1}\mathcal{C})(x)=I_{T_*\nu}(\alpha|\mathcal{C})(Tx)$ for $\nu$-a.e. $x\in Y$.
    \item The entropy cocycle equation: $H_\nu(\alpha\vee\beta|\mathcal{C})=H_\nu(\alpha|\mathcal{C})+H_\nu(\beta|\alpha\vee\mathcal{C}).$
    \item Chung's lemma: if $\mathcal{C}_1\subset\mathcal{C}_2\subset\cdots $ is an increasing sequence of $\sigma$-algebras with $\mathcal{C}_n\nearrow\mathcal{C}$ (or a decreasing sequence with $\mathcal{C}_n\searrow\mathcal{C}$), and $H_\nu(\alpha)<\infty$, then
        \begin{equation*}
          f:=\sup_{n}I_\nu(\alpha|\mathcal{C}_n)\in L^1(\nu),\mbox{ and }\int_Yfd\nu\leq H_\nu(\alpha)+1.
        \end{equation*}
        \item The martingale convergence theorem for entropy: under the same conditions as in Chung's lemma, one has $I_\nu(\alpha|\mathcal{C}_n)\to I_\nu(\alpha|\mathcal{C})$ $\nu$-a.e. and in $L^1(\nu)$; moreover, $H_\nu(\alpha|\mathcal{C}_n)\searrow H_{\nu}(\alpha|\mathcal{C})$ (or $H_\nu(\alpha|\mathcal{C}_n)\nearrow H_{\nu}(\alpha|\mathcal{C})$, respectively).
  \end{enumerate}
\end{lemma}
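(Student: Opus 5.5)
Parts (1), (3), (4) and (5) are the cited results from \cite{Glasner}, so the plan is to prove only part (2). Everything reduces to a single identity for conditional expectations:
\begin{equation*}
E_\nu(1_{T^{-1}A}\,|\,T^{-1}\mathcal{C})=E_{T_*\nu}(1_A\,|\,\mathcal{C})\circ T\quad \nu\text{-a.e., for every } A\in\mathcal{Y}.
\end{equation*}
Here $T^{-1}\mathcal{C}=\{T^{-1}D:\ D\in\mathcal{C}\}$ is a $\sigma$-algebra contained in $\mathcal{Y}$, because $T$ is measurable.

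\textbf{Step 1: well-definedness.} Fix a version $g_A$ of $E_{T_*\nu}(1_A|\mathcal{C})$. Two versions differ only on a $T_*\nu$-null set $N$. Then $T^{-1}N$ is $\nu$-null, so $g_A\circ T$ is well defined $\nu$-a.e., independently of the chosen version.

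\textbf{Step 2: verify the identity.} The function $g_A\circ T$ is $T^{-1}\mathcal{C}$-measurable and bounded. Every set in $T^{-1}\mathcal{C}$ has the form $T^{-1}D$ with $D\in\mathcal{C}$. By the change of variables formula,
\begin{equation*}
\int_{T^{-1}D} g_A\circ T\,d\nu=\int_D g_A\,d(T_*\nu)=T_*\nu(A\cap D)=\nu(T^{-1}A\cap T^{-1}D)=\int_{T^{-1}D}1_{T^{-1}A}\,d\nu .
\end{equation*}
By the a.e. uniqueness of conditional expectation, $g_A\circ T$ is a version of $E_\nu(1_{T^{-1}A}|T^{-1}\mathcal{C})$.

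\textbf{Step 3: sum over the partition.} Use $1_{T^{-1}A}=1_A\circ T$ and take the union of the countably many exceptional null sets, one for each $A\in\alpha$. Outside this null set,
\begin{equation*}
I_\nu(T^{-1}\alpha|T^{-1}\mathcal{C})(x)=-\sum_{A\in\alpha}1_A(Tx)\log g_A(Tx)=I_{T_*\nu}(\alpha|\mathcal{C})(Tx).
\end{equation*}

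\textbf{Main obstacle: the logarithm at zero.} The only delicate point is the convention $\log 0$. This is handled by the standard observation that the set $\{1_A=1,\ g_A=0\}$ is $T_*\nu$-null, because its measure equals $\int_{\{g_A=0\}}g_A\,d(T_*\nu)=0$. Its preimage under $T$ is therefore $\nu$-null, so both sides are finite and equal $\nu$-a.e.
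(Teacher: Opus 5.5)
Your proof is correct and follows the paper's argument for part (2) essentially step for step. Like the paper, you show that $E_{T_*\nu}(1_A\mid\mathcal{C})\circ T$ is a version of $E_\nu(1_{T^{-1}A}\mid T^{-1}\mathcal{C})$ by testing on sets $T^{-1}D$ with $D\in\mathcal{C}$ via change of variables, then discard countably many null sets and sum over $\alpha$.

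Your remarks on version-independence and on $\log 0$ are harmless additions. The $\log 0$ point is not actually a delicate step: once the conditional expectations agree pointwise off a null set, the two information functions agree term by term, including wherever they equal $+\infty$.
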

\begin{proof}[Proof of Lemma \ref{lemma four useful lemma}]
To prove part (2), it suffices to establish, for every $A\in\alpha$, that
\begin{equation}\label{eq conditional expectation pullback}
E_\nu(1_{T^{-1}A}\mid T^{-1}\mathcal C)(x)
=
E_{T_*\nu}(1_A\mid\mathcal C)(Tx)
\end{equation}
for $\nu$-a.e. $x\in Y$.

Fix $A\in\alpha$ and define
$
g_A(x):=E_{T_*\nu}(1_A\mid\mathcal C)(Tx).
$
Since $E_{T_*\nu}(1_A\mid\mathcal C)$ is $\mathcal C$-measurable, the function $g_A$ is $T^{-1}\mathcal C$-measurable.

For every $B\in\mathcal C$, the defining property of conditional expectation gives
\begin{equation*}
\begin{aligned}
\int_{T^{-1}B}
E_\nu(1_{T^{-1}A}\mid T^{-1}\mathcal C)(x)\,d\nu(x)=\int_{T^{-1}B}1_{T^{-1}A}(x)\,d\nu(x)=
T_*\nu(A\cap B).
\end{aligned}
\end{equation*}
On the other hand, by the definition of the pushforward measure,
\begin{equation*}
\begin{aligned}
\int_{T^{-1}B}g_A(x)\,d\nu(x)
&=
\int_{T^{-1}B}
E_{T_*\nu}(1_A\mid\mathcal C)(Tx)\,d\nu(x)=
\int_B E_{T_*\nu}(1_A\mid\mathcal C)(y)\,dT_*\nu(y)\\
&=
\int_B1_A(y)\,dT_*\nu(y)=
T_*\nu(A\cap B).
\end{aligned}
\end{equation*}
Since every element of $T^{-1}\mathcal C$ is of the form $T^{-1}B$ for some $B\in\mathcal C$, the uniqueness of conditional expectation implies \eqref{eq conditional expectation pullback}.

Because $\alpha$ is countable, we may choose a common $\nu$-full measure set on which \eqref{eq conditional expectation pullback} holds for every $A\in\alpha$. Therefore,
\begin{equation*}
\begin{aligned}
I_\nu(T^{-1}\alpha\mid T^{-1}\mathcal C)(x)
&=
-\sum_{A\in\alpha}
1_{T^{-1}A}(x)
\log E_\nu(1_{T^{-1}A}\mid T^{-1}\mathcal C)(x)\\
&=
-\sum_{A\in\alpha}
1_A(Tx)
\log E_{T_*\nu}(1_A\mid\mathcal C)(Tx)\\
&=
I_{T_*\nu}(\alpha\mid\mathcal C)(Tx)
\end{aligned}
\end{equation*}
for $\nu$-a.e. $x\in Y$. This proves part (2).
\end{proof}

\subsection{Double disintegration of measures}

The following theorem of measure disintegration can be found in \cite[Theorem 5.14]{Thomas2011}.
\begin{lemma}\label{lemma measure disintegration}
  Let $(Y,\mathcal{Y},\nu)$ be a Lebesgue space, and let $\mathcal{A}\subset\mathcal{Y}$ be a countably generated $\sigma$-algebra. Then there exist an $\mathcal{A}$-measurable full-measure set $Y^\prime\subset Y$ and a system $\{\nu_y^\mathcal{A}|\ y\in Y^\prime\}$ of measures on $Y$, referred to as the conditional measures, with the following properties:
  \begin{enumerate}
    \item For all $f\in L^1(Y,\mathcal{Y},\nu)$, one has
 $
      E(f|\mathcal{A})(y)=\int f(x)d\nu_y^\mathcal{A}(x),\ \nu\mbox{-a.e. }y\in Y.
$ Moreover, the above equation uniquely determines $\nu_y^\mathcal{A}$ for $\nu$-a.e. $y\in Y$.
    \item For all $y\in Y^\prime$, $\nu_y^\mathcal{A}([y]_\mathcal{A})=1$, where $[y]_\mathcal{A}=\cap_{y\in A\in\mathcal{A}}A$; moreover, $\nu_x^\mathcal{A}=\nu_y^\mathcal{A}$ for $x,y\in Y^\prime$ whenever $[x]_\mathcal{A}=[y]_{\mathcal{A}}.$
  \end{enumerate}
\end{lemma}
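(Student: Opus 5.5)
The plan is to build the conditional measures via the Riesz representation theorem. The steps are: reduce to a compact metric model, construct $\nu^\mathcal{A}_y$ from countably many conditional expectations, check property (1) by a monotone class argument, and read off property (2) from the generators of $\mathcal{A}$.

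\textbf{Reduction.} Since $(Y,\mathcal{Y},\nu)$ is a Lebesgue space, it is isomorphic mod $0$ to a Borel probability space on a compact metric space. Null sets can be absorbed into the exceptional set at the end. So I would assume $Y$ is compact metric and $\mathcal{Y}$ is its Borel $\sigma$-algebra (completed).

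\textbf{Construction.} Fix a countable set $\mathcal{D}\subset C(Y)$ that is dense in sup norm, is a $\mathbb{Q}$-vector space, and contains $1$. For each $f\in\mathcal{D}$, choose an $\mathcal{A}$-measurable version $g_f$ of $E(f\mid\mathcal{A})$. The identities $g_{af+bh}=ag_f+bg_h$, $g_1=1$, and $g_f\ge 0$ whenever $f\ge0$ each fail only on an $\mathcal{A}$-measurable null set. There are countably many such identities, so removing these sets gives an $\mathcal{A}$-measurable full-measure set $Y_0$.

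For $y\in Y_0$, the map $f\mapsto g_f(y)$ is a positive, $\mathbb{Q}$-linear functional on $\mathcal{D}$ with $|g_f(y)|\le\|f\|_\infty$. It therefore extends uniquely to a positive linear functional of norm one on $C(Y)$. By the Riesz representation theorem it is given by a Borel probability measure $\nu_y^\mathcal{A}$.

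\textbf{Property (1).} The relation $\int f\,d\nu^\mathcal{A}_y=E(f\mid\mathcal{A})(y)$ holds for all $f\in\mathcal{D}$, hence for all $f\in C(Y)$ by uniform approximation. Consider the class of bounded Borel $f$ for which $y\mapsto\int f\,d\nu_y^\mathcal{A}$ is $\mathcal{A}$-measurable and is a version of $E(f\mid\mathcal{A})$. This class is closed under bounded monotone limits, using the conditional monotone convergence theorem. A monotone class argument then extends the relation to all bounded Borel $f$, and truncation extends it to all $f\in L^1$.

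For uniqueness, suppose two families both satisfy (1). They then agree for $\nu$-a.e. $y$ simultaneously on every $f\in\mathcal{D}$, which is a countable set. Hence the two measures coincide for a.e. $y$.

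\textbf{Property (2).} Let $\{A_i\}_{i\ge1}$ generate $\mathcal{A}$. Since $E(1_{A_i}\mid\mathcal{A})=1_{A_i}$ a.e., we get $\nu^\mathcal{A}_y(A_i)=1_{A_i}(y)$ off an $\mathcal{A}$-measurable null set, because both sides are $\mathcal{A}$-measurable in $y$. Removing these countably many sets gives $Y'$. For $y\in Y'$, the atom $[y]_\mathcal{A}$ is the countable intersection of those $A_i$ containing $y$ and the complements of the others, all of which have $\nu_y^\mathcal{A}$-measure $1$. Therefore $\nu^\mathcal{A}_y([y]_\mathcal{A})=1$.

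For the second statement of (2), $y\mapsto\nu_y^\mathcal{A}$ is built entirely from $\mathcal{A}$-measurable functions. An $\mathcal{A}$-measurable function is constant on each atom $[y]_\mathcal{A}$, since atoms are exactly the sets not separated by the $A_i$. Hence $\nu_x^\mathcal{A}=\nu_y^\mathcal{A}$ whenever $[x]_\mathcal{A}=[y]_\mathcal{A}$.

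\textbf{Main obstacle.} I expect the bookkeeping to be the delicate part. Every exceptional set must be chosen $\mathcal{A}$-measurable, not merely $\mathcal{Y}$-null, so that $Y'$ is $\mathcal{A}$-measurable. In addition, the mod-$0$ isomorphism to a compact model must be transferred back without destroying this $\mathcal{A}$-measurability. To handle this, I would use versions of conditional expectations that are genuinely $\mathcal{A}$-measurable throughout, and pull $\mathcal{A}$ back through the isomorphism restricted to a full-measure Borel set.
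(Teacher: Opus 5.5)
Your proposal is correct and is the standard argument. The paper does not prove this lemma; it quotes it from \cite[Theorem 5.14]{Thomas2011}, whose proof proceeds as you do: a countable $\mathbb{Q}$-linear dense family in $C(Y)$, $\mathcal{A}$-measurable versions of its conditional expectations, the Riesz representation theorem, a monotone class extension, and a countable generating family of $\mathcal{A}$ for the atom statement.

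The only step that reference does not cover (it is stated for Borel probability spaces) is your transfer from a Lebesgue space to a compact model. This goes through as you indicate, provided the versions $g_f$ are chosen $\mathcal{A}$-measurable on all of $Y$, not merely on the full-measure Borel set where the isomorphism is defined. That set need not lie in $\mathcal{A}$, so $Y'$ may have to contain points outside it. The compact model should then be used only to supply the test functions and the Riesz theorem.
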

For a measurable set $A\subset \E$ and $\omega\in\Omega$, define its
fiber section by
$$
A_\omega:=\{x\in \E_\omega:(\omega,x)\in A\}.
$$
For a measurable partition $\alpha$ of $\E$, let
$$
\alpha_\omega
:=
\{A_\omega:A\in\alpha,\ A_\omega\neq\varnothing\}.
$$
Then $\alpha_\omega$ is a measurable partition of $\E_\omega$. Similarly,
if $\mathcal A$ is a sub-$\sigma$-algebra of
$(\mathcal F\otimes\mathcal B)|_\E$, define
$$
\mathcal A_\omega:=\{A_\omega:A\in\mathcal A\}.
$$
Then $\mathcal A_\omega$ is a sub-$\sigma$-algebra of
$\mathcal B(\E_\omega)$.
In this paper, we use the following double
disintegration of measures.

\begin{corollary}\label{corollary measure disintegration}
  Let $\xi$ be a countably generated measurable partition of $\E$ and let $\mu\in \Prp$. Then there exist a $\mu$-full measure set $\E^\prime$ and a uniquely determined family of regular conditional probability measures $(\mu_\omega)_x$ on $\E_\omega$ for each $(\omega,x)\in\E^\prime$ satisfying the following properties.
   \begin{enumerate}
    \item For all $f\in L^1(\mu)=L^1(\E,(\F\otimes \B)\vert_{\E},\mu)$, one has
        \begin{equation}\label{eq integral with respect to conditonal measure}
          E(f|\xi\vee \F_\E)(\omega,x):=E(f|\sigma(\xi\vee\F_\E))(\omega,x)=\int f(\omega,y)d(\mu_\omega)_x(y).
        \end{equation}
    \item For all $(\omega,x)\in\E^\prime$, $(\mu_\omega)_x( \E^\prime_\omega\cap\xi_\omega(x))=1$; moreover, for $(\omega,x),(\omega,y)\in\E^\prime$, $\xi_\omega(x)=\xi_\omega(y)$ implies that $(\mu_\omega)_x=(\mu_\omega)_y.$
    \item For $\P$-a.e. $\omega\in\Omega$, $\mu_\omega=\int_{\E_\omega}(\mu_\omega)_xd\mu_\omega(x)$, i.e. for any measurable set $B\subset\E_\omega$, $\mu_\omega(B)=\int_{\E_\omega}(\mu_\omega)_x(B)d\mu_\omega(x).$
  \end{enumerate}
In this case, for $\P$-a.e. $\omega\in\Omega$, $(\mu_\omega)_x$ can be viewed as the conditional measure of $\mu_\omega$ with respect to the countably generated measurable partition $\xi_\omega$.
\end{corollary}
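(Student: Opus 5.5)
We derive Corollary \ref{corollary measure disintegration} from Lemma \ref{lemma measure disintegration}, applied to the Lebesgue space $(\E,(\F\otimes\B)|_\E,\mu)$ and the $\sigma$-algebra $\mathcal A:=\sigma(\xi\vee\F_\E)$. First, $\mathcal A$ is countably generated. Since $(\Omega,\F)$ is standard Borel, $\F$ is countably generated, say by $\{A_k\}$. Hence $\F_\E$ is generated by the countable family $\{(A_k\times X)\cap\E\}$. Adjoining a countable generating family of $\xi$ gives a countable generating family of $\mathcal A$. Lemma \ref{lemma measure disintegration} then yields an $\mathcal A$-measurable full-measure set $Y'\subset\E$ and conditional measures $\mu^{\mathcal A}_{(\omega,x)}$ on $\E$.

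The key observation is that atoms of $\mathcal A$ lie in single fibers. We may take the generating sets $A_k$ to separate points of $\Omega$, which is possible in a standard Borel space. Then $[(\omega,x)]_{\mathcal A}=\{\omega\}\times\xi_\omega(x)$, because intersecting the $\F_\E$-atom $\{\omega\}\times\E_\omega$ with the $\xi$-atom $\xi(\omega,x)$ gives exactly this set. By Lemma \ref{lemma measure disintegration}(2), $\mu^{\mathcal A}_{(\omega,x)}$ is concentrated on $\{\omega\}\times\xi_\omega(x)$. We therefore define $(\mu_\omega)_x$ on $\E_\omega$ by $(\mu_\omega)_x(B):=\mu^{\mathcal A}_{(\omega,x)}(\{\omega\}\times B)$. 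With this definition, property (1) is Lemma \ref{lemma measure disintegration}(1) rewritten, since $\int f\,d\mu^{\mathcal A}_{(\omega,x)}=\int f(\omega,y)\,d(\mu_\omega)_x(y)$. Uniqueness also comes from Lemma \ref{lemma measure disintegration}(1).

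For property (2), we need $(\mu_\omega)_x(\E'_\omega\cap\xi_\omega(x))=1$, which requires the full-measure set to be charged by the conditionals as well. We shrink $Y'$ to
\[
\E':=\{(\omega,x)\in Y':\ \mu^{\mathcal A}_{(\omega,x)}(Y')=1\}.
\]
This set has full measure because $\int\mu^{\mathcal A}_{(\omega,x)}(Y')\,d\mu=\mu(Y')=1$. It is also $\mathcal A$-measurable, since $(\omega,x)\mapsto \mu^{\mathcal A}_{(\omega,x)}(Y')$ is a version of $E(1_{Y'}|\mathcal A)$. Because $\E'$ is $\mathcal A$-measurable, it is a union of $\mathcal A$-atoms, so each conditional measure gives it full mass. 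Iterating this once more if needed yields $(\mu_\omega)_x(\E'_\omega\cap\xi_\omega(x))=1$. The equality $(\mu_\omega)_x=(\mu_\omega)_y$ when $\xi_\omega(x)=\xi_\omega(y)$ is the atom-constancy in Lemma \ref{lemma measure disintegration}(2).

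For property (3), take $f=1_{\{(\omega,y):y\in B_\omega\}}g(\omega)$ with $g$ an arbitrary bounded $\F$-measurable function, and use the tower property $E(E(f|\mathcal A)|\F_\E)=E(f|\F_\E)$. By Rokhlin disintegration with respect to $\F_\E$, $E(f|\F_\E)(\omega,x)=g(\omega)\mu_\omega(B_\omega)$. Comparing integrals against $g(\omega)$ gives $\mu_\omega(B)=\int(\mu_\omega)_x(B)\,d\mu_\omega(x)$ for $\P$-a.e. $\omega$, for each fixed $B$. This holds first for $B$ in a countable generating algebra of $\B$, and then extends to all Borel $B$ by a monotone class argument. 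A countable intersection of the resulting full-measure sets of $\omega$ gives a single exceptional null set.

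The main obstacle is this measurability bookkeeping: ensuring that the atoms of $\sigma(\xi\vee\F_\E)$ are exactly the fiber sets $\{\omega\}\times\xi_\omega(x)$, and that all exceptional null sets can be chosen simultaneously. Countable generation of $\F$ and $\B$ is used at both points. The final remark, that $(\mu_\omega)_x$ is the conditional measure of $\mu_\omega$ with respect to $\xi_\omega$, follows from property (3) together with the constancy of $(\mu_\omega)_x$ on the atoms of $\xi_\omega$.
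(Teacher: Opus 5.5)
Your proposal is correct and follows the paper's proof. Like the paper, you apply Lemma \ref{lemma measure disintegration} to $\sigma(\xi\vee\F_\E)$, identify its atoms as $\{\omega\}\times\xi_\omega(x)$ because $\F$ is countably generated and separates points, transfer the conditional measures to the fibers, and get (3) by comparing with the disintegration over $\F_\E$; your tower-property and monotone-class argument simply spells out the paper's appeal to uniqueness of that disintegration, and your shrinking of $Y'$ in (2) is harmless but unnecessary, since $Y'$ is already $\sigma(\xi\vee\F_\E)$-measurable, hence a union of atoms, so (2) holds directly with $\E'=Y'$.
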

\begin{proof}[Proof of Corollary \ref{corollary measure disintegration}]
Because $(\Omega,\mathcal F)$ is standard Borel, $\mathcal F$ is countably generated and separates points.
Consequently, $\mathcal F_{\mathcal E}$ is countably generated, and the atom of $\sigma(\xi\vee\mathcal F_{\mathcal E})$ containing $(\omega,x)$ is
$\{\omega\}\times\xi_\omega(x)=\{(\omega,y):\ y\in\xi_\omega(x)\}$.
  As a direct consequence of Lemma \ref{lemma measure disintegration}, there exist a $\mu$-full-measure set $\E^\prime$ and a uniquely determined family of regular conditional probability measures $\{(\mu_\omega)_x^*:\ (\omega,x)\in\E^\prime\}$ with respect to the $\sigma$-algebra $\sigma(\xi\vee \F_\E)$ satisfying the following properties.
  \begin{enumerate}
    \item[(a)] For all $f\in L^1(\mu)=L^1(\E,(\F\otimes \B)\vert_{\E},\mu)$, one has
        \begin{equation*}
          E(f|\xi\vee \F_\E)(\omega,x):=E(f|\sigma(\xi\vee\F_\E))(\omega,x)=\int f(\omega^\prime,x^\prime)d(\mu_\omega)_x^*(\omega^\prime,x^\prime).
        \end{equation*}
    \item[(b)] For all $(\omega,x)\in\E^\prime$, $(\mu_\omega)_x^*(\{\omega\}\times (\E^\prime_\omega\cap\xi_\omega(x)))=1$; moreover, for $(\omega,x),(\omega,y)\in\E^\prime$, $\xi_\omega(x)=\xi_\omega(y)$ implies that $(\mu_\omega)_x^*=(\mu_\omega)_y^*.$
  \end{enumerate}
  Note that $(\mu_\omega)_x^*$ is concentrated on the fiber $\{\omega\}\times \E_\omega$; we simply denote by $(\mu_\omega)_x$ the copy on $\E_\omega$ such that $(\mu_\omega)_x(B)=(\mu_\omega)_x^*(\{\omega\}\times B)$ for any measurable set $B\subset \E_\omega$. We can rewrite (a) and (b) to get (1) and (2).

  Finally, we prove (3) using the uniqueness of disintegration; see also \cite[Proposition 3.6]{Hans02}. For any bounded measurable function $f:\E\to \mathbb{R}$, a direct computation gives
  \begin{equation*}
    \begin{split}
        &\int_\Omega\int_{\E_\omega}\int_{\E_\omega}f(\omega,y)d(\mu_\omega)_x(y)d\mu_\omega(x)d\P(\omega) \overset{\eqref{eq integral with respect to conditonal         measure}}{=} \int_\Omega\int_{\E_\omega}E(f|\xi\vee \F_\E)(\omega,x)d\mu_\omega(x)d\P(\omega) \\
         =& \int_\E E(f|\xi\vee \F_\E)(\omega,x)d\mu(\omega,x)=\int_\E f(\omega,x)d\mu(\omega,x)=\int_\Omega\int_{\E_\omega}f(\omega,x)d\mu_\omega(x) d\P(\omega),
    \end{split}
  \end{equation*}
where the third equality follows from the property of conditional expectation.
By the uniqueness of the disintegration of $\mu$ over $\F_\E$, we conclude that $\mu_\omega=\int_{\E_\omega}(\mu_\omega)_xd\mu_\omega(x)$ for $\P$-a.e. $\omega\in\Omega$.
\end{proof}

\subsection{Fiber measure-theoretic entropy}\label{sec 3.3}
Recall that $\F_\E$ is defined in \eqref{eq def FE}.
For a measurable partition $\alpha$ of $\E$, and integers $m<n$, we denote
\begin{equation*}
  \alpha_m^n:=\bigvee_{k=m}^n\Theta^{-k}\alpha,\ \alpha^+:=\alpha_1^\infty=\bigvee_{k=1}^\infty\Theta^{-k}\alpha,\mbox{ and } \alpha^-:=\alpha_{-\infty}^{-1}=\bigvee_{k=1}^\infty\Theta^{k}\alpha.
\end{equation*}
For any $\mu\in \mathcal{I}_\P(\E)$,
the fiber measure-theoretic entropy of $F$ (or the relative entropy of $\Theta$) with respect to $\mu$ is defined by
\begin{equation*}
  h_\mu(F)=h_{\mu}^{(r)}(\Theta)=\sup_{\alpha}h_{\mu}(F,\alpha),
\end{equation*}
where
\begin{equation*}
  h_{\mu}(F,\alpha)=\lim_{n\rightarrow\infty}\frac{1}{n}H_\mu(\alpha_{0}^{n-1}|\F_\E),
\end{equation*}
and the supremum is taken over all finite or countable measurable partitions $\alpha$ of $\E$ with finite conditional entropy $H_\mu(\alpha|\F_\E)<\infty$.

The following lemma records a basic property that we use throughout this paper.
\begin{lemma}\cite[Lemma 2.2.3]{bogenschiitz1993equilibrium}
  For any two countable measurable partitions $\alpha$ and $\beta$ of $\E$ and $\mu\in \mathcal{I}_\P(\E)$, we have
  \begin{equation*}
    H_\mu(\alpha|\beta\vee\F_\E)=\int_\Omega H_{\mu_\omega}(\alpha_\omega|\beta_\omega)d\P(\omega).
  \end{equation*}
In particular, $H_\mu(\alpha|\F_\E)=\int H_{\mu_\omega}(\alpha_\omega)d\P(\omega)$.
\end{lemma}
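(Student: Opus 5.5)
The lemma to be proved is: for countable partitions $\alpha,\beta$ of $\E$ and $\mu\in\mathcal I_\P(\E)$,
$$H_\mu(\alpha|\beta\vee\F_\E)=\int_\Omega H_{\mu_\omega}(\alpha_\omega|\beta_\omega)\,d\P(\omega).$$
The plan is to compute the conditional expectations $E_\mu(1_A|\beta\vee\F_\E)$ explicitly in terms of the fiber measures $\mu_\omega$, and then integrate the information function fiber by fiber. I treat first the case where $\beta$ is countable; the conditional entropy is then a sum over the countably many atoms $B\in\beta$.

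For $A\in\alpha$, I claim that for $\mu$-a.e. $(\omega,x)$ with $x\in B_\omega$,
$$E_\mu(1_A|\beta\vee\F_\E)(\omega,x)=\frac{\mu_\omega(A_\omega\cap B_\omega)}{\mu_\omega(B_\omega)}.$$
Denote the right-hand side, summed over $B\in\beta$ with the indicator $1_B$, by $g_A$. It is set to $0$ where $\mu_\omega(B_\omega)=0$, and that set is $\mu$-null.

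To verify the claim, I proceed in three steps.

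\textbf{Measurability.} The map $\omega\mapsto\mu_\omega(C_\omega)$ is $\F$-measurable for measurable $C$, by the disintegration. Hence $g_A$ is $\sigma(\beta\vee\F_\E)$-measurable.

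\textbf{Integral identity on generators.} The $\sigma$-algebra $\sigma(\beta\vee\F_\E)$ is generated by the $\pi$-system of sets $((E\times X)\cap\E)\cap B$ with $E\in\F$ and $B\in\beta$; countable unions of these suffice by additivity. On such a set,
$$\int g_A\,d\mu=\int_E\int_{B_\omega}\frac{\mu_\omega(A_\omega\cap B_\omega)}{\mu_\omega(B_\omega)}\,d\mu_\omega\,d\P=\int_E\mu_\omega(A_\omega\cap B_\omega)\,d\P=\mu\bigl(A\cap B\cap(E\times X)\bigr).$$

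\textbf{Uniqueness.} A Dynkin $\pi$–$\lambda$ argument then identifies $g_A$ with the conditional expectation.

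Substituting into the definition of $I_\mu(\alpha|\beta\vee\F_\E)$ gives
$$I_\mu(\alpha|\beta\vee\F_\E)(\omega,x)=-\sum_{A,B}1_{A_\omega\cap B_\omega}(x)\log\frac{\mu_\omega(A_\omega\cap B_\omega)}{\mu_\omega(B_\omega)}=I_{\mu_\omega}(\alpha_\omega|\beta_\omega)(x).$$
Here the right-hand side is the information function of the countable partition $\alpha_\omega$ given $\beta_\omega$ on $(\E_\omega,\mu_\omega)$. The function is nonnegative, so Tonelli applied to $d\mu=d\mu_\omega\,d\P$ yields
$$H_\mu(\alpha|\beta\vee\F_\E)=\int_\Omega\int I_{\mu_\omega}(\alpha_\omega|\beta_\omega)\,d\mu_\omega\,d\P=\int_\Omega H_{\mu_\omega}(\alpha_\omega|\beta_\omega)\,d\P,$$
with both sides possibly infinite. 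Taking $\beta$ to be the trivial partition gives the particular case $H_\mu(\alpha|\F_\E)=\int H_{\mu_\omega}(\alpha_\omega)\,d\P(\omega)$.

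The main obstacle is the uniqueness step. It needs care with null sets: fibers where $\mu_\omega(B_\omega)=0$, and $\omega$ outside the full-measure set where the disintegration is defined. It also requires checking that the generating family really is a $\pi$-system generating $\sigma(\beta\vee\F_\E)$; this holds because intersections of such sets have the same form, since $\beta$ is a partition. If the lemma is needed for a measurable rather than countable $\beta$, I would instead use Corollary \ref{corollary measure disintegration}. There $(\mu_\omega)_x$ plays the role of $\mu_\omega(\cdot\cap B_\omega)/\mu_\omega(B_\omega)$, and the same computation goes through using property (3) of that corollary.
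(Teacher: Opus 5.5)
Your argument is correct. It also never uses the $\Theta$-invariance of $\mu$, so it holds for every $\mu\in\Prp$.

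The paper gives no proof of this statement; it only cites \cite[Lemma 2.2.3]{bogenschiitz1993equilibrium}. The closest argument in the paper is the proof of Lemma \ref{lemma fiber to global}. The statement follows from that lemma by applying it with $\alpha$ in place of its $\beta$ and $\mathcal C=\sigma(\beta)$, noting $\sigma(\beta)_\omega=\sigma(\beta_\omega)$, and then integrating $I_{\mu_\omega}(\alpha_\omega|\beta_\omega)$ against $d\mu_\omega\,d\P$.

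The two routes run in opposite directions:
\begin{itemize}
\item \textbf{Your route.} You write down an explicit candidate $g_A=\sum_{B\in\beta}1_B\,\mu_\omega(A_\omega\cap B_\omega)/\mu_\omega(B_\omega)$, defined on all of $\E$. You check once, globally, by a $\pi$--$\lambda$ argument on $\E$, that it is a version of $E_\mu(1_A|\beta\vee\F_\E)$. Its agreement with $E_{\mu_\omega}(1_{A_\omega}|\beta_\omega)$ on each fiber is then read off directly from the ratio formula.
\item \textbf{The paper's route.} It starts from the abstract conditional expectation $\psi_A=E_\mu(1_A|\mathcal C\vee\F_\E)$. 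It verifies the defining integral identity for the sections $\psi_{A,\omega}$ on each set of a countable generating algebra $\mathcal C_0$; countability is what yields a single full-measure set of $\omega$'s. It then extends the identity within each fiber by a monotone class argument.
\end{itemize}

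Your version is shorter and avoids selecting good versions fiber by fiber. It relies on $\beta$ being a countable partition, so that an explicit formula for the conditional expectation is available. The paper's version works for an arbitrary countably generated $\mathcal C$. That generality is what is needed in Lemma \ref{lemma random smb theorem}, where one conditions on $\xi$ and on $\bigvee_{i=1}^n\Theta^i\alpha\vee\Theta^n\xi$, which are not countable partitions.
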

\begin{lemma}\label{lemma fiber to global}
  Let $\beta$ be a countable measurable partition of $\E$, and let
$\mathcal C$ be a countably generated sub-\(\sigma\)-algebra of $(\F\otimes \B)\vert_{\E}$.
Then we have
\begin{equation}\label{eq fiber to global}
  I_\mu(\beta\mid \mathcal C\vee\mathcal F_\E)(\omega,x)
=
I_{\mu_\omega}(\beta_\omega\mid \mathcal C_\omega)(x)\mbox{ for $\mu$-a.e. $(\omega,x)\in\E$}.
\end{equation}
\end{lemma}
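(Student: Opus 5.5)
The plan is to reduce the identity \eqref{eq fiber to global} to a statement about conditional expectations of indicators, and to verify that statement through the defining property of conditional expectation, integrating fiberwise. By the definition \eqref{eq def of information}, it suffices to show, for each $B\in\beta$, that
\begin{equation*}
E_\mu(1_B\mid \mathcal C\vee\F_\E)(\omega,x)=E_{\mu_\omega}(1_{B_\omega}\mid \mathcal C_\omega)(x)\quad\mbox{for $\mu$-a.e. }(\omega,x)\in\E .
\end{equation*}
Since $\beta$ is countable, we may then intersect the resulting full-measure sets and sum over $B\in\beta$, which gives the identity of information functions. This is exactly the countable-intersection step used in the proof of part (2) of Lemma \ref{lemma four useful lemma}.

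To identify the right-hand side as a measurable object on $\E$, I would use Corollary \ref{corollary measure disintegration}, not computing $E_{\mu_\omega}(\cdot\mid\mathcal C_\omega)$ separately for each $\omega$. Since $\mathcal C$ is countably generated, it is generated by a countable family $\{C_k\}$. Let $\xi$ be the countably generated measurable partition whose atoms are the atoms of $\sigma(\{C_k\})$, so that $\sigma(\xi)=\mathcal C$ up to the usual identification. The corollary gives conditional measures $(\mu_\omega)_x$ for $\xi\vee\F_\E$ with
\begin{equation*}
E_\mu(1_B\mid\mathcal C\vee\F_\E)(\omega,x)=(\mu_\omega)_x(B_\omega).
\end{equation*}
Part (3) of the corollary shows that, for $\P$-a.e. $\omega$, the family $x\mapsto(\mu_\omega)_x$ disintegrates $\mu_\omega$. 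Part (2) shows that it is carried by the atoms $\xi_\omega(x)$, which are the atoms of $\mathcal C_\omega$. Hence, for $\P$-a.e. $\omega$, $(\mu_\omega)_x$ is the conditional measure of $\mu_\omega$ with respect to $\mathcal C_\omega$, as remarked at the end of Corollary \ref{corollary measure disintegration}.

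The remaining task is to check that $x\mapsto(\mu_\omega)_x(B_\omega)$ is a version of $E_{\mu_\omega}(1_{B_\omega}\mid\mathcal C_\omega)$. This step has two parts.
\begin{enumerate}
\item[(i)] \emph{Measurability.} The map $x\mapsto(\mu_\omega)_x(B_\omega)$ is $\mathcal C_\omega$-measurable, because the section of a $\mathcal C\vee\F_\E$-measurable function at $\omega$ is $\mathcal C_\omega$-measurable.
\item[(ii)] \emph{Integral identity.} For each $C\in\mathcal C$ and $A\in\F$, the defining property of $E_\mu(\cdot\mid\mathcal C\vee\F_\E)$ gives
\begin{equation*}
\int_A\int_{C_\omega}(\mu_\omega)_x(B_\omega)\,d\mu_\omega(x)\,d\P(\omega)=\int_A\mu_\omega(B_\omega\cap C_\omega)\,d\P(\omega).
\end{equation*}
Since $A\in\F$ is arbitrary, the inner integrals agree for $\P$-a.e. $\omega$. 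Running $C$ over a countable generating $\pi$-system of $\mathcal C$ makes the exceptional null set independent of $C$. Dynkin's $\pi$–$\lambda$ theorem then extends the fiberwise identity to all of $\mathcal C_\omega$.
\end{enumerate}
Uniqueness of conditional expectation now yields the displayed equality for $\mu_\omega$-a.e. $x$, for $\P$-a.e. $\omega$. By the disintegration of $\mu$, this means the equality holds for $\mu$-a.e. $(\omega,x)$.

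The main obstacle is this management of null sets. The exceptional set must be chosen uniformly, first over the countable generators of $\mathcal C$ and then over the countably many $B\in\beta$. This is exactly where the hypotheses that $\mathcal C$ is countably generated and that $\beta$ is countable are used.
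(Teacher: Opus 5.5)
Your proposal is correct and takes essentially the paper's route. You reduce to indicators, use that the section of the global conditional expectation is $\mathcal C_\omega$-measurable, get the fiberwise integral identity from the defining property on sets $C\cap((A\times X)\cap\E)$, make the null set uniform over a countable generating family, extend it by a monotone-class argument (your $\pi$--$\lambda$ version), and finish by uniqueness and a countable intersection over $\beta$. The detour through Corollary \ref{corollary measure disintegration} to write the version as $(\mu_\omega)_x(B_\omega)$ is harmless but unnecessary: steps (i)--(ii) only use some $\mathcal C\vee\F_\E$-measurable version of $E_\mu(1_B\mid\mathcal C\vee\F_\E)$, which is exactly the paper's $\psi_A$.
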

\begin{proof}[Proof of Lemma \ref{lemma fiber to global}]
Fix $A\in\beta$ and define
$$
\psi_A(\omega,x)
:=
E_\mu(1_A\mid\mathcal C\vee\mathcal F_\E)(\omega,x).
$$
Since $\psi_A$ is $\mathcal C\vee\mathcal F_\E$-measurable and  $
(\mathcal C\vee\mathcal F_\E)_\omega=\mathcal C_\omega$,
the section
$$
\psi_{A,\omega}(x):=\psi_A(\omega,x)
$$
is $\mathcal C_\omega$-measurable on $\E_\omega$ for
$\P$-a.e. $\omega\in\Omega$.

Since $\mathcal C$ is countably generated, there exists a countable
algebra $\mathcal C_0\subset\mathcal C$ such that
$
\sigma(\mathcal C_0)=\mathcal C.
$  Fix $B\in\mathcal C_0$. For every $D\in\mathcal F$, the defining
property of conditional expectation gives
\begin{equation}\label{eq int=int}
  \begin{split}
\int_D\int_{B_\omega}
\psi_{A,\omega}(x)\,d\mu_\omega(x)\,d\P(\omega)
&=
\int_{B\cap((D\times X)\cap\E)}
\psi_A(\omega,x)\,d\mu(\omega,x)\\
&=
\int_{B\cap((D\times X)\cap\E)}
1_A(\omega,x)\,d\mu(\omega,x)\\
&=
\int_D
\mu_\omega(A_\omega\cap B_\omega)\,d\P(\omega).
  \end{split}
\end{equation}
Since $D\in\mathcal F$ is arbitrary, it follows that
$
\int_{B_\omega}\psi_{A,\omega}(x)\,d\mu_\omega(x)
=
\mu_\omega(A_\omega\cap B_\omega)
$ for $\P$-a.e. $\omega\in\Omega$.

For each $B\in\mathcal C_0$, there exists a measurable set
$\Omega_{A,B}\subset\Omega$ with $\P(\Omega_{A,B})=1$ such that \eqref{eq int=int} holds for every $\omega\in\Omega_{A,B}$.
Since $\mathcal C_0$ is countable,
the set
$$
\Omega_A:=\bigcap_{B\in\mathcal C_0}\Omega_{A,B}
$$
satisfies $\P(\Omega_A)=1$. Hence \eqref{eq int=int} holds
simultaneously for every $B\in\mathcal C_0$ and every
$\omega\in\Omega_A$.

Fix $\omega\in\Omega_A$ and define
$$
\mathcal M_\omega
:=
\left\{
G\in\mathcal C_\omega:
\int_G\psi_{A,\omega}(x)\,d\mu_\omega(x)
=
\mu_\omega(A_\omega\cap G)
\right\}.
$$
By construction,
$$
(\mathcal C_0)_\omega
:=
\{B_\omega:B\in\mathcal C_0\}
\subset\mathcal M_\omega.
$$

We claim that $\mathcal M_\omega$ is a monotone class. Indeed, suppose
that $G_1\subset G_2\subset\cdots$ are elements of $\mathcal M_\omega$
and
$
G=\bigcup_{n=1}^{+\infty}G_n.
$
Since $\psi_{A,\omega}\geq0$, the monotone convergence theorem gives
$$
\begin{aligned}
\int_G\psi_{A,\omega}\,d\mu_\omega
&=
\lim_{n\to+\infty}
\int_{G_n}\psi_{A,\omega}\,d\mu_\omega
&=
\lim_{n\to+\infty}
\mu_\omega(A_\omega\cap G_n)
&=
\mu_\omega(A_\omega\cap G).
\end{aligned}
$$
Thus $G\in\mathcal M_\omega$.

Similarly, suppose that $G_1\supset G_2\supset\cdots$ are elements of
$\mathcal M_\omega$ and
$
G=\bigcap_{n=1}^{+\infty}G_n.
$
Since $0\leq\psi_{A,\omega}\leq1$, the dominated convergence theorem
gives
$$
\begin{aligned}
\int_G\psi_{A,\omega}\,d\mu_\omega
&=
\lim_{n\to+\infty}
\int_{G_n}\psi_{A,\omega}\,d\mu_\omega
&=
\lim_{n\to+\infty}
\mu_\omega(A_\omega\cap G_n)
&=
\mu_\omega(A_\omega\cap G).
\end{aligned}
$$
Hence $G\in\mathcal M_\omega$.

Therefore, $\mathcal M_\omega$ is a monotone class. Since taking fiber
sections commutes with complements and countable unions, one has
$$
\sigma\bigl((\mathcal C_0)_\omega\bigr)=\mathcal C_\omega.
$$
The monotone class theorem now yields
$
\mathcal M_\omega=\mathcal C_\omega.
$
Consequently,
$
\int_G\psi_{A,\omega}(x)\,d\mu_\omega(x)
=
\mu_\omega(A_\omega\cap G)
$
for every $G\in\mathcal C_\omega$.

Since $\psi_{A,\omega}$ is $\mathcal C_\omega$-measurable, the
definition and uniqueness of conditional expectation imply that
 for every $\omega\in\Omega_A$, the function
$\psi_{A,\omega}$ is a version of
$
E_{\mu_\omega}(1_{A_\omega}\mid\mathcal C_\omega).
$
We therefore choose a good version
\[
E_{\mu_\omega}(1_{A_\omega}\mid\mathcal C_\omega)
:=\psi_{A,\omega},
\qquad \omega\in\Omega_A,
\]
and define it arbitrarily for $\omega\notin\Omega_A$.
Since $\mathbb P(\Omega_A)=1$, it follows from the disintegration
formula that
\[
\psi_A(\omega,x)
=
E_{\mu_\omega}(1_{A_\omega}\mid\mathcal C_\omega)(x)
\]
for $\mu$-a.e. $(\omega,x)\in\mathcal E$.

Since $\beta$ is countable, we may choose a common $\mu$-full measure
set on which this identity holds for every $A\in\beta$. Therefore,
$$
\begin{aligned}
I_\mu(\beta\mid\mathcal C\vee\mathcal F_\E)(\omega,x)
&=
-\sum_{A\in\beta}
1_A(\omega,x)
\log E_\mu(1_A\mid\mathcal C\vee\mathcal F_\E)(\omega,x)\\
&=
-\sum_{A\in\beta}
1_{A_\omega}(x)
\log E_{\mu_\omega}
(1_{A_\omega}\mid\mathcal C_\omega)(x)\\
&=
I_{\mu_\omega}(\beta_\omega\mid\mathcal C_\omega)(x)
\end{aligned}
$$
for $\mu$-a.e. $(\omega,x)\in\E$. This proves
\eqref{eq fiber to global}.
\end{proof}

\begin{lemma}\label{lemma entropy computation}
  For $\mu\in \mathcal{I}_\P(\E)$ and any finite or countable measurable partition $\alpha$ of $\E$ satisfying $H_\mu(\alpha|\F_\E)<\infty$, one has
  \begin{equation*}
    h_\mu(F,\alpha)=H_\mu(\alpha|\alpha^-\vee\F_\E).
  \end{equation*}
\end{lemma}
\begin{proof}[Proof of Lemma \ref{lemma entropy computation}]
  By \cite[Theorem 4.2 (9)]{liuqiansmooth95}, $h_\mu^{(r)}(\Theta,\alpha)=h_{\mu}^{(r)}(\Theta^{-1},\alpha)$. Therefore, we have
  \begin{equation*}
    \begin{split}
      h_\mu(F,\alpha)  &= h_\mu^{(r)}(\Theta,\alpha)= h_{\mu}^{(r)}(\Theta^{-1},\alpha)=\lim_{n\to+\infty}\frac{1}{n} H_\mu(\alpha\vee \Theta\alpha\vee\cdots \vee \Theta^{n-1}\alpha|\F_\E)\\
         &=\lim_{n\to+\infty}\frac{1}{n}\left(H_\mu(\alpha|\F_\E)+H_\mu(\alpha|\Theta\alpha\vee\F_\E)+\cdots +H_\mu(\alpha|\vee_{i=1}^{n-1}\Theta^i\alpha\vee \F_\E)\right)\\
         &=\lim_{n\to+\infty} H_\mu(\alpha|\vee_{i=1}^{n-1}\Theta^i\alpha\vee \F_\E)=H_\mu(\alpha|\alpha^-\vee \F_\E).
    \end{split}
  \end{equation*}
This completes the proof.
\end{proof}

\begin{lemma}\label{lemma three of entropy}
  Let $\mu\in \ip$, and let $\alpha,\beta,\gamma$ be finite or countable measurable partitions of $\E$ satisfying $H_\mu(\alpha|\F_\E)<\infty$, $H_\mu(\beta|\F_\E)<\infty$, and $H_\mu(\gamma|\F_\E)<\infty$. Then
  \begin{enumerate}
    \item If $\beta\prec\alpha$, then $\lim\limits_{n\to+\infty}\frac{1}{n}H_\mu(\alpha_0^n|\beta^{-}\vee\F_\E)=H_\mu(\alpha|\alpha^{-}\vee \F_\E)$.
    \item If $\alpha\prec \beta$, then $\lim\limits_{n\to+\infty}\frac{1}{n}H_\mu(\alpha_0^n|\beta^{-}\vee\F_\E)=H_\mu(\alpha|\alpha^{-}\vee \F_\E).$
    \item If $\alpha\prec \beta$, then $\lim\limits_{n\to+\infty} H_\mu(\alpha|\beta^{-}\vee \Theta^n\gamma^{-}\vee \F_\E)=H_\mu(\alpha|\beta^{-}\vee \F_\E).$
  \end{enumerate}
\end{lemma}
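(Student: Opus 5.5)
The plan is to reduce all three statements to the martingale convergence theorem (Lemma \ref{lemma four useful lemma}(5)), the entropy cocycle equation (Lemma \ref{lemma four useful lemma}(3)) and Lemma \ref{lemma entropy computation}. Each term will be rewritten via $\Theta$-invariance of $\mu$ and $\Theta$-invariance of $\F_\E$ (note $\Theta^{-1}\F_\E=\F_\E$ because $\theta$ is invertible and bimeasurable). The finite-entropy assumptions make every conditional entropy below finite, so we may subtract freely.

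For (1), with $\beta\prec\alpha$, we have $\beta^-\subset\alpha^-$. The lower bound comes from
$$H_\mu(\alpha_0^n|\beta^-\vee\F_\E)\ge H_\mu(\alpha_0^n|\alpha^-\vee\F_\E)=\sum_{k=0}^n H_\mu(\alpha|\alpha^-\vee\F_\E)=(n+1)H_\mu(\alpha|\alpha^-\vee\F_\E).$$
The equality is the cocycle identity: each step $H_\mu(\Theta^{-k}\alpha|\Theta^{-(k-1)}\alpha\vee\cdots\vee\alpha\vee\alpha^-\vee\F_\E)$ equals, after pulling back by $\Theta^k$ using invariance and Lemma \ref{lemma four useful lemma}(2), the quantity $H_\mu(\alpha|\alpha^-\vee\F_\E)$. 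For the upper bound, $H_\mu(\alpha_0^n|\beta^-\vee\F_\E)\le H_\mu(\alpha_0^n|\F_\E)$, and by the definition of $h_\mu(F,\alpha)$ together with Lemma \ref{lemma entropy computation}, $\frac1n H_\mu(\alpha_0^n|\F_\E)\to H_\mu(\alpha|\alpha^-\vee\F_\E)$. Dividing by $n$ gives (1).

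For (2), with $\alpha\prec\beta$, the upper bound is the same as in (1): condition only on $\F_\E$. For the lower bound, I will use
$$H_\mu(\beta_0^n|\beta^-\vee\F_\E)\le H_\mu(\alpha_0^n|\beta^-\vee\F_\E)+H_\mu(\beta_0^n|\alpha_0^n\vee\beta^-\vee\F_\E).$$
The left side equals $(n+1)H_\mu(\beta|\beta^-\vee\F_\E)=(n+1)h_\mu(F,\beta)$. The last term is bounded above by $H_\mu(\beta_0^n|\alpha_0^n\vee\alpha^-\vee\F_\E)$ minus a correction, which is awkward. A cleaner route is to write
$$H_\mu(\alpha_0^n|\beta^-\vee\F_\E)=H_\mu(\alpha_0^n\vee\beta^-|\F_\E)\text{-type differences}$$
and compare with the sum $\sum_{k=0}^n H_\mu(\alpha|\alpha_{-k}^{-1}\vee\Theta^{k}\beta^-\vee\F_\E)$, obtained by applying the cocycle identity and then shifting each term by $\Theta^k$. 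Using $\alpha\prec\beta$, the conditioning $\sigma$-algebra is $\alpha_{-k}^{-1}\vee\Theta^{k}\beta^-$, and $\Theta^k\beta^-=\beta_{-\infty}^{-k-1}$ shrinks as $k\to\infty$. By (3), applied with $\gamma=\beta$ and the inner partition $\alpha$, each term converges to $H_\mu(\alpha|\alpha^-\vee\F_\E)$, and Ces\`aro averaging then yields (2). So I prove (3) first, in a slightly more general form that allows an increasing finite past $\alpha_{-k}^{-1}$ alongside the tail $\Theta^k\gamma^-$.

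For (3), the $\sigma$-algebras $\mathcal C_n=\beta^-\vee\Theta^n\gamma^-\vee\F_\E$ decrease in $n$, since $\Theta^n\gamma^-=\gamma_{-\infty}^{-n-1}$. Lemma \ref{lemma four useful lemma}(5) gives $H_\mu(\alpha|\mathcal C_n)\to H_\mu(\alpha|\bigcap_n\mathcal C_n)$, with the finiteness hypothesis applied to $\mu$ conditionally on $\F_\E$ fiberwise. The main obstacle is identifying $\bigcap_n(\beta^-\vee\gamma_{-\infty}^{-n-1}\vee\F_\E)$ with $\beta^-\vee\F_\E$ modulo sets that do not affect $H_\mu(\alpha|\cdot)$; intersection and join do not commute in general. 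I would handle this as in the classical Pinsker-type argument (Rokhlin's lemma): show that the difference $H_\mu(\alpha|\beta^-\vee\F_\E)-H_\mu(\alpha|\mathcal C_n)$ equals $H_\mu(\gamma_{-n-N}^{-n-1}|\beta^-\vee\F_\E)-H_\mu(\gamma_{-n-N}^{-n-1}|\alpha\vee\beta^-\vee\F_\E)$ in the limit $N\to\infty$. This follows from two uses of the cocycle identity. Then shift by $\Theta^{n}$ and use $\alpha\prec\beta$, so that $\Theta^n\alpha\subset\Theta^n\beta\subset$ past information, to show the difference tends to $0$ via a further martingale limit in $n$. This step, controlling the mixed intersection, is where I expect the real work lies.
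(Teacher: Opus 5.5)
You have a genuine gap in part (3), and your part (2) inherits it because you derive (2) from (3). Your part (1) is correct. Your sandwich between $(n+1)H_\mu(\alpha|\alpha^-\vee\F_\E)$ and $H_\mu(\alpha_0^n|\F_\E)$ is slightly shorter than the paper's increasing-martingale argument via $\Theta^n(\beta^-\vee\alpha_0^{n-1})\nearrow\alpha^-$. Your reduction of (2) to (3) is also valid, and it needs no generalized form of (3): since $\alpha\prec\beta$, one has $\alpha_{-k}^{-1}\vee\Theta^k\beta^-\vee\F_\E=\alpha^-\vee\Theta^k\beta^-\vee\F_\E$, so (3) for the triple $(\alpha,\alpha,\beta)$ already suffices.

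\textbf{Where (3) fails.} Put $\mathcal C_n=\beta^-\vee\Theta^n\gamma^-\vee\F_\E$, $G_{n,N}=\gamma_{-n-N}^{-n-1}$ and $d_{n,N}=H_\mu(\alpha|\beta^-\vee\F_\E)-H_\mu(\alpha|\beta^-\vee G_{n,N}\vee\F_\E)$. Your mutual-information identity is fine. Shifting by $\Theta^{-n}$ turns $d_{n,N}$ into $H_\mu(\gamma_{-N}^{-1}|\beta_{-\infty}^{n-1}\vee\F_\E)-H_\mu(\gamma_{-N}^{-1}|\Theta^{-n}\alpha\vee\beta_{-\infty}^{n-1}\vee\F_\E)$. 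Since $\beta_{-\infty}^{n-1}\prec\Theta^{-n}\alpha\vee\beta_{-\infty}^{n-1}\prec\beta_{-\infty}^{n}$, both terms converge to $H_\mu(\gamma_{-N}^{-1}|\beta_{-\infty}^{\infty}\vee\F_\E)$. So $d_{n,N}\to0$ as $n\to\infty$ for each fixed $N$, and that is all your ``further martingale limit in $n$'' can deliver. However, the quantity in (3) is $H_\mu(\alpha|\beta^-\vee\F_\E)-H_\mu(\alpha|\mathcal C_n)=\sup_N d_{n,N}$, because $d_{n,N}$ increases in $N$. You therefore need $\lim_n\sup_N d_{n,N}=0$. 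The limits are in the wrong order, and martingale convergence gives no uniformity in the block length $N$. This is exactly the mixed-intersection issue you flagged, and it is not a technicality. The intersection $\bigcap_n\mathcal C_n$ contains the remote past $\bigcap_n\Theta^n\gamma^-$, which need not lie in $\beta^-\vee\F_\E$. Showing that this remote past carries no information about $\alpha$ given $\beta^-$ is an entropy-rate statement, and your outline never supplies one.

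\textbf{The fix.} The missing ingredient is the route to (2) that you discarded as awkward; it has no correction term. Since $\sigma(\alpha^-)\subset\sigma(\beta^-)$ and $\alpha_0^n\prec\beta_0^n$, the cocycle identity gives
\[
H_\mu(\beta_0^n|\alpha_0^n\vee\beta^-\vee\F_\E)\le H_\mu(\beta_0^n|\alpha_0^n\vee\alpha^-\vee\F_\E)=H_\mu(\beta_0^n|\alpha^-\vee\F_\E)-H_\mu(\alpha_0^n|\alpha^-\vee\F_\E).
\]
By your part (1) applied to the pair $\alpha\prec\beta$, the right-hand side divided by $n$ tends to $H_\mu(\beta|\beta^-\vee\F_\E)-H_\mu(\alpha|\alpha^-\vee\F_\E)$. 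Inserting this into your first inequality gives the lower bound in (2); this is exactly the paper's argument.

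Then derive (3) from (2), not the reverse. Apply (2) to $\beta\prec\beta\vee\gamma$ and write $H_\mu(\beta_0^n|\beta^-\vee\gamma^-\vee\F_\E)=\sum_{k=0}^nH_\mu(\beta|\beta^-\vee\Theta^k\gamma^-\vee\F_\E)$. The summands increase in $k$ and are bounded by $H_\mu(\beta|\beta^-\vee\F_\E)$. Their Ces\`aro means converge to this bound, so the summands themselves converge to it. Finally, $H_\mu(\alpha|\mathcal C_n)=H_\mu(\beta|\mathcal C_n)-H_\mu(\beta|\alpha\vee\mathcal C_n)\ge H_\mu(\beta|\mathcal C_n)-H_\mu(\beta|\alpha\vee\beta^-\vee\F_\E)$ transfers the conclusion from $\beta$ to $\alpha$.
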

\begin{proof}[Proof of Lemma \ref{lemma three of entropy}]
This lemma is a relative version of \cite[Lemma 18.2]{Glasner}.
  We first prove part (1). Assume that  $\beta\prec \alpha$. Then $$\Theta^n(\beta^-\vee \alpha_0^{n-1})\nearrow \alpha^-.$$
 Indeed, $$
\Theta^n\bigl(\beta^-\vee\alpha_0^{n-1}\bigr)
=
\left(\bigvee_{i=1}^{n}\Theta^i\alpha\right)
\vee
\left(\bigvee_{i=n+1}^{+\infty}\Theta^i\beta\right),
$$
and the assumption $\beta\prec\alpha$ implies that this is an
increasing sequence of partitions whose limit is $\alpha^-$.

 By the invariance of $\mu$, the equality $\Theta^{-1}\F_\E=\F_\E$, and the martingale convergence theorem,
  \begin{equation}\label{eq H convergence 1}
    H_\mu(\Theta^{-n}\alpha|\alpha_{0}^{n-1}\vee \beta^-\vee \F_\E)= H_\mu(\alpha|\Theta^n(\alpha_{0}^{n-1}\vee \beta^-)\vee \F_\E)\searrow H_\mu(\alpha|\alpha^-\vee \F_\E).
  \end{equation}
Therefore, we have
  \begin{equation*}
  \begin{split}
     &\lim\limits_{n\to+\infty}\frac{1}{n}H_\mu(\alpha_0^n|\beta^{-}\vee\F_\E)\\ =&  \lim\limits_{n\to+\infty}\frac{1}{n}(H_\mu(\alpha|\beta^{-}\vee \F_\E)+H_\mu(\Theta^{-1}\alpha|\alpha\vee\beta^{-}\vee \F_\E )+\cdots +H_\mu(\Theta^{-n}\alpha|\alpha_{0}^{n-1}\vee \beta^-\vee \F_\E))\\
     =&\lim\limits_{n\to+\infty}H_\mu(\Theta^{-n}\alpha|\alpha_{0}^{n-1}\vee \beta^-\vee \F_\E)\\
     =&H_\mu(\alpha|\alpha^-\vee \F_\E).
  \end{split}
  \end{equation*}
The first equality follows from the entropy cocycle equation, the second equality follows from the Cesàro convergence theorem, and the third equality follows from \eqref{eq H convergence 1}. This proves part (1).

Next, we prove part (2).  We assume $\alpha\prec \beta$. Then $ \alpha^{-} \prec \beta^{-}$, and hence
\begin{equation*}
\limsup\limits_{n\to+\infty}\frac{1}{n}H_\mu(\alpha_0^n|\beta^{-}\vee\F_\E)\leq \lim\limits_{n\to+\infty}\frac{1}{n}H_\mu(\alpha_0^n|\alpha^{-}\vee\F_\E)=H_\mu(\alpha|\alpha^-\vee\F_\E),
\end{equation*}
where the last equality follows from part (1). Notice that
\begin{equation*}
  \begin{split}
  \frac{1}{n}H_\mu(\alpha_0^n|\beta^{-}\vee\F_\E)  =&\frac{1}{n}H_\mu(\beta_0^n|\beta^{-}\vee\F_\E)-\frac{1}{n}H_\mu(\beta_0^n|\alpha_0^n\vee \beta^{-}\vee\F_\E)\\
  \geq & \frac{1}{n}H_\mu(\beta_0^n|\beta^{-}\vee\F_\E)-\frac{1}{n}H_\mu(\beta_0^n|\alpha_0^n\vee \alpha^{-}\vee\F_\E)\\
  =&\frac{1}{n}H_\mu(\beta_0^n\mid\beta^-\vee\F_\E) -  \frac{1}{n}H_\mu(\beta_0^n\mid\alpha^-\vee\F_\E)
+
 \frac{1}{n}H_\mu(\alpha_0^n\mid\alpha^-\vee\F_\E),
  \end{split}
\end{equation*}
 where the first and third equalities follow from the fact that $\alpha_0^n\prec\beta_0^n$ and from the entropy cocycle equation, and the inequality follows because $\beta^-$ is finer than $\alpha^-$.
By part (1),
$$
\lim_{n\to+\infty}
\frac{1}{n}
H_\mu(\beta_0^n\mid\beta^-\vee\F_\E)
=
H_\mu(\beta\mid\beta^-\vee\F_\E),
$$
$$
\lim_{n\to+\infty}
\frac{1}{n}
H_\mu(\beta_0^n\mid\alpha^-\vee\F_\E)
=
H_\mu(\beta\mid\beta^-\vee\F_\E),
$$
and
$$
\lim_{n\to+\infty}
\frac{1}{n}
H_\mu(\alpha_0^n\mid\alpha^-\vee\F_\E)
=
H_\mu(\alpha\mid\alpha^-\vee\F_\E).
$$
Consequently,
$$
\liminf_{n\to+\infty}
\frac{1}{n}
H_\mu(\alpha_0^n\mid\beta^-\vee\F_\E)
\geq
H_\mu(\alpha\mid\alpha^-\vee\F_\E).
$$
Combining this with the preceding $\limsup$ estimate, we conclude that
$$
\lim_{n\to+\infty}
\frac{1}{n}
H_\mu(\alpha_0^n\mid\beta^-\vee\F_\E)
=
H_\mu(\alpha\mid\alpha^-\vee\F_\E).
$$
This proves part (2).

Finally, we prove part (3). Assume that $\alpha\prec\beta$  and
$H_\mu(\gamma\mid\F_\E)<+\infty$. Since
$$
H_\mu(\beta\vee\gamma\mid\F_\E)
\leq
H_\mu(\beta\mid\F_\E)
+
H_\mu(\gamma\mid\F_\E)
<+\infty,
$$
part (2) applies to $\beta\prec\beta\vee\gamma$.

Since $\beta^-\vee\Theta^n\gamma^-\vee\F_\E$ is finer than
$\beta^-\vee\F_\E$, we have
$$
\limsup_{n\to+\infty}
H_\mu(\alpha\mid\beta^-\vee\Theta^n\gamma^-\vee\F_\E)
\leq
H_\mu(\alpha\mid\beta^-\vee\F_\E).
$$

By part (2), the entropy cocycle equation, and the
$\Theta$-invariance of $\mu$, one has
{\small
\begin{equation}\label{eq H convergence 2}
\begin{split}
H_\mu(\beta\mid\beta^-\vee\F_\E)
&=
\lim_{n\to+\infty}
\frac{1}{n}
H_\mu(\beta_0^n\mid\beta^-\vee\gamma^-\vee\F_\E)\\
&=
\lim_{n\to+\infty}
\frac{1}{n}
\sum_{k=0}^{n}
H_\mu\bigl(
\beta\mid\beta^-\vee\Theta^k\gamma^-\vee\F_\E
\bigr)\\
&=
\lim_{n\to+\infty}
H_\mu\bigl(
\beta\mid\beta^-\vee\Theta^n\gamma^-\vee\F_\E
\bigr).
\end{split}
\end{equation}}
Indeed, since
$
\Theta^n\gamma^-
=
\bigvee_{j=n+1}^{+\infty}\Theta^j\gamma,
$
the sequence
$
H_\mu\bigl(
\beta\mid\beta^-\vee\Theta^n\gamma^-\vee\F_\E
\bigr)
$
is increasing and bounded above by
$H_\mu(\beta\mid\beta^-\vee\F_\E)$. Hence it converges, and the last
equality follows from the Ces\`aro convergence theorem. Here, the fact
that the sum contains $n+1$ terms causes no problem, since
$(n+1)/n\to1$.

Again, by the entropy cocycle equation,
\begin{equation*}
\begin{split}
&H_\mu(\alpha\mid\beta^-\vee\Theta^n\gamma^-\vee\F_\E)\\
&=
H_\mu(\beta\mid\beta^-\vee\Theta^n\gamma^-\vee\F_\E)
-
H_\mu\bigl(
\beta\mid
\alpha\vee\beta^-\vee\Theta^n\gamma^-\vee\F_\E
\bigr)\\
&\geq
H_\mu(\beta\mid\beta^-\vee\Theta^n\gamma^-\vee\F_\E)
-
H_\mu(\beta\mid\alpha\vee\beta^-\vee\F_\E).
\end{split}
\end{equation*}
Therefore, by combining this with \eqref{eq H convergence 2}, we obtain
\begin{equation*}
\begin{split}
&\liminf_{n\to+\infty}
H_\mu(\alpha\mid\beta^-\vee\Theta^n\gamma^-\vee\F_\E)\\
&\geq
H_\mu(\beta\mid\beta^-\vee\F_\E)
-
H_\mu(\beta\mid\alpha\vee\beta^-\vee\F_\E)\\
&=
H_\mu(\alpha\mid\beta^-\vee\F_\E),
\end{split}
\end{equation*}
where the last equality follows from the entropy cocycle equation.

Combining this with the preceding $\limsup$ estimate, we conclude that
$$
\lim_{n\to+\infty}
H_\mu(\alpha\mid\beta^-\vee\Theta^n\gamma^-\vee\F_\E)
=
H_\mu(\alpha\mid\beta^-\vee\F_\E).
$$
This proves (3) and completes the proof of Lemma
\ref{lemma three of entropy}.
\end{proof}

\subsection{A local version of the Shannon--McMillan--Breiman theorem}
Before stating and proving a local version of the Shannon--McMillan--Breiman theorem for RDSs, we need the following lemma of Breiman, which can be found in \cite[Lemma 14.34]{Glasner}.
\begin{lemma}\label{lemma Breiman}
  Let $(Y,\mathcal{Y},\nu,T)$ be a measure-preserving dynamical system, $g_n,g\in L^1(\nu)$ with $g_n\to g$ a.e. and in $L^1(\nu)$. If $\int\sup_n|g_n|<\infty$, then
  \begin{equation*}
    \frac{1}{N}\sum_{n=1}^{N}g_n(T^ny)\to E_\nu(g|\mathcal{I}_T)\mbox{ a.e. and in $L^1(\nu)$},
  \end{equation*}
where $\mathcal{I}_T$ is the $\sigma$-algebra of $T$-invariant sets.
\end{lemma}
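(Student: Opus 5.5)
We prove Breiman's lemma by splitting $g_n = g + (g_n-g)$ and treating the two pieces separately.

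\emph{The limit function.} By Birkhoff's ergodic theorem applied to $g\in L^1(\nu)$,
$$\frac1N\sum_{n=1}^N g(T^ny)\to E_\nu(g\mid\mathcal I_T)$$
$\nu$-a.e. and in $L^1(\nu)$. The shift of index from $0,\dots,N-1$ to $1,\dots,N$ is harmless: the difference is $\frac1N\bigl(g(T^Ny)-g(y)\bigr)$. This tends to $0$ in $L^1(\nu)$ by invariance of $\nu$. It also tends to $0$ a.e., since $g(T^Ny)/N\to0$ a.e. (this follows from the Birkhoff averages converging). It therefore suffices to show that
$$\frac1N\sum_{n=1}^N |g_n-g|(T^ny)\to0$$
a.e. and in $L^1(\nu)$.

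\emph{$L^1$ convergence.} This part is immediate. By invariance of $\nu$, the $L^1$ norm of the average above is $\frac1N\sum_{n=1}^N\|g_n-g\|_{L^1}$. Since $\|g_n-g\|_{L^1}\to0$, these Ces\`aro means tend to $0$.

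\emph{A.e. convergence (the main step).} Set
$$G_K:=\sup_{n\ge K}|g_n-g|.$$
Then $G_K$ is decreasing in $K$, and $G_K\le 2\sup_n|g_n|$ because $|g|\le\sup_n|g_n|$ a.e. Hence $G_K\in L^1(\nu)$ by the integrability hypothesis. Moreover $G_K\to0$ a.e. because $g_n\to g$ a.e. Dominated convergence then gives $\int G_K\,d\nu\to0$.

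Fix $K$. For $N>K$ we bound
$$\frac1N\sum_{n=1}^N|g_n-g|(T^ny)\le \frac1N\sum_{n=1}^{K-1}|g_n-g|(T^ny)+\frac1N\sum_{n=1}^{N}G_K(T^ny).$$
The first sum has finitely many terms, each of the form $h(T^ny)/N$ with $h\in L^1(\nu)$, so it tends to $0$ a.e. The second sum converges a.e. to $E_\nu(G_K\mid\mathcal I_T)$ by Birkhoff's theorem. Therefore
$$\limsup_{N\to\infty}\frac1N\sum_{n=1}^N|g_n-g|(T^ny)\le E_\nu(G_K\mid\mathcal I_T)\quad\text{a.e.}$$

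Now let $K\to\infty$. The functions $E_\nu(G_K\mid\mathcal I_T)$ are nonnegative and decreasing in $K$. Their integrals equal $\int G_K\,d\nu\to0$, so by monotone convergence their a.e. limit has integral $0$ and hence vanishes a.e. This completes the proof.

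The only delicate point is the domination: we need $\sup_K G_K$ to be integrable, and this is exactly where the hypothesis $\int\sup_n|g_n|\,d\nu<\infty$ enters.
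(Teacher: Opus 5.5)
Your proof is correct and is essentially the standard argument. The paper does not reprove this lemma; it cites Lemma 14.34 of Glasner's book, which uses the same truncation: bound the tail by $G_K=\sup_{n\ge K}|g_n-g|$, apply Birkhoff's theorem to $G_K$, and let $K\to\infty$ using $\int G_K\,d\nu\to 0$, which follows from the domination $G_K\le 2\sup_n|g_n|$.
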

The following lemma is a local version of the Shannon--McMillan--Breiman theorem for RDSs and may be viewed as a relative counterpart of \cite[Theorem 14.35]{Glasner}; see also \cite[Proposition 3.1]{Fengscichina2022} and \cite[Theorem 2.2.5]{bogenschiitz1993equilibrium}.
\begin{lemma}\label{lemma random smb theorem}
Assume $\mu\in \mathcal{I}_\P(\E)$. Let $\xi$ be a countably generated measurable partition of $\E$ with $\Theta\xi\prec \xi$. Let $\alpha$ be a finite measurable partition satisfying $\alpha^-\prec \xi$. Define the measurable partition $\gamma_n:=\alpha^-\vee\Theta^n\xi$ for all $n\geq 0$ and a $\sigma$-algebra $\mathcal{H}:=\bigcap\limits_{n=0}^\infty\left(\sigma(\gamma_n)\vee \F_\E\right)$. Then we have
\begin{equation*}
  \lim_{N\to+\infty}\frac{1}{N}I_{\mu_\omega}\left((\alpha_0^{N-1})_\omega|\xi_\omega\right)(x)=E_\mu(f|\mathcal{I}_\Theta)(\omega,x)\mbox{ $\mu$-a.e. and in $L^1(\mu)$,}
\end{equation*}
where
$
  f(\omega,x):=I_{\mu}(\alpha|\mathcal{H})(\omega,x),
$ and $\mathcal{I}_\Theta$ is the $\sigma$-algebra of $\Theta$-invariant sets.
\end{lemma}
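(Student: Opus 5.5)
The argument mirrors the proof of the classical Shannon--McMillan--Breiman theorem (Glasner, Theorem 14.35): write the information function of $\alpha_0^{N-1}$ as a telescoping Birkhoff-type sum and apply Breiman's Lemma \ref{lemma Breiman}. First I would pass from the fiber quantity to a global one. By Lemma \ref{lemma fiber to global}, applied with $\beta=\alpha_0^{N-1}$ and $\mathcal C=\sigma(\xi)$ (countably generated), we have
\[
I_{\mu_\omega}\bigl((\alpha_0^{N-1})_\omega\mid\xi_\omega\bigr)(x)=I_\mu(\alpha_0^{N-1}\mid\xi\vee\F_\E)(\omega,x)
\]
for $\mu$-a.e. $(\omega,x)$. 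This reduces the statement to a limit for $\frac1N I_\mu(\alpha_0^{N-1}\mid\xi\vee\F_\E)$.

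Next I would expand the global information function using the information cocycle equation, Lemma \ref{lemma four useful lemma}(1), peeling off the latest time first:
\[
I_\mu(\alpha_0^{N-1}\mid\xi\vee\F_\E)=\sum_{k=0}^{N-1} I_\mu\bigl(\Theta^{-k}\alpha\mid \alpha_0^{k-1}\vee\xi\vee\F_\E\bigr).
\]
Now I use Lemma \ref{lemma four useful lemma}(2) with $T=\Theta^k$, together with the invariance $\Theta_*\mu=\mu$ and $\Theta^{-1}\F_\E=\F_\E$. This rewrites the $k$-th term as $g_k(\Theta^k(\omega,x))$, where
\[
g_k:=I_\mu\bigl(\alpha\mid \Theta^k(\alpha_0^{k-1}\vee\xi)\vee\F_\E\bigr)=I_\mu\bigl(\alpha\mid (\textstyle\bigvee_{i=1}^k\Theta^i\alpha)\vee\Theta^k\xi\vee\F_\E\bigr).
\]
Since $\alpha^-\prec\xi$ and $\Theta\xi\prec\xi$, the partition $\Theta^k\xi$ already refines $\bigvee_{i>k}\Theta^i\alpha$, so
\[
\bigl(\textstyle\bigvee_{i=1}^k\Theta^i\alpha\bigr)\vee\Theta^k\xi=\alpha^-\vee\Theta^k\xi=\gamma_k .
\]
Hence $g_k=I_\mu(\alpha\mid\sigma(\gamma_k)\vee\F_\E)$. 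The partitions $\gamma_k$ decrease in $k$, because $\Theta^{k+1}\xi\prec\Theta^k\xi$. So the $\sigma$-algebras $\sigma(\gamma_k)\vee\F_\E$ decrease to $\mathcal H$.

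Since $\alpha$ is finite, $H_\mu(\alpha)<\infty$. The martingale convergence theorem, Lemma \ref{lemma four useful lemma}(5), for decreasing $\sigma$-algebras then gives $g_k\to f=I_\mu(\alpha\mid\mathcal H)$ a.e. and in $L^1(\mu)$. Chung's lemma, Lemma \ref{lemma four useful lemma}(4), gives $\sup_k g_k\in L^1(\mu)$. Breiman's Lemma \ref{lemma Breiman} now yields
\[
\frac1N\sum_{k=0}^{N-1}g_k\circ\Theta^k\to E_\mu(f\mid\mathcal I_\Theta)
\]
a.e. and in $L^1(\mu)$. The index shift relative to Lemma \ref{lemma Breiman}, whose sum runs over $1\le n\le N$, is harmless: the $k=0$ term divided by $N$ tends to zero, and the range difference is absorbed by the factor $(N+1)/N\to1$.

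The main obstacle is justifying the identification $\bigl(\bigvee_{i=1}^k\Theta^i\alpha\bigr)\vee\Theta^k\xi=\gamma_k$ together with the monotonicity of $\sigma(\gamma_k)\vee\F_\E$. This must be done carefully at the level of $\sigma$-algebras modulo $\mu$-null sets, since $\xi$ is a general (possibly uncountable) measurable partition. There is also a subtlety in the martingale theorem: a decreasing intersection of joins with $\F_\E$ need not equal a join with $\F_\E$. This is exactly why $\mathcal H$ is defined as the intersection, so the theorem applies directly to the decreasing sequence.
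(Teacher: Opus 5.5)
Your proposal is correct and follows essentially the same route as the paper. You telescope the information function by the cocycle identity into $\sum_{n=0}^{N-1} f_n\circ\Theta^n$ with $f_n=I_\mu(\alpha\mid\bigvee_{i=1}^n\Theta^i\alpha\vee\Theta^n\xi\vee\F_\E)$, identify these $\sigma$-algebras with $\sigma(\gamma_n)\vee\F_\E\searrow\mathcal H$ using $\alpha^-\prec\xi$ and $\Theta\xi\prec\xi$, and conclude with the martingale theorem, Chung's lemma and Breiman's lemma. The only difference is cosmetic: the paper does the cocycle decomposition and the shift by $F_\omega^n$ fiberwise before invoking Lemma \ref{lemma fiber to global}, whereas you first pass to the global information function and then apply Lemma \ref{lemma four useful lemma}(1)--(2) on the skew product, which is if anything slightly cleaner.
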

\begin{proof}[Proof of Lemma \ref{lemma random smb theorem}]

  We write
   \begin{equation*}
     f_0(\omega,x):=I_\mu(\alpha|\xi\vee\F_\E)(\omega,x)\overset{\eqref{eq fiber to global}}{=}I_{\mu_\omega}(\alpha_\omega|\xi_\omega)(x),\mbox{ $\mu$-a.e.}
   \end{equation*}
and, for $n\geq 1$,
   \begin{equation*}
     f_n(\omega,x):= I_\mu(\alpha|\vee_{i=1}^n\Theta^i\alpha\vee\Theta^n\xi\vee \F_\E)(\omega,x) \overset{\eqref{eq fiber to global}}{=} I_{\mu_\omega}(\alpha_\omega| \vee_{i=1}^n(\Theta^i\alpha)_\omega\vee(\Theta^n\xi)_\omega)(x),\ \mbox{$\mu$-a.e.}
   \end{equation*}
Then we have
  \begin{equation*}
  \begin{split}
     I_{\mu_\omega}((\alpha_0^{N-1})_\omega|\xi_\omega) (x) =&I_{\mu_\omega}(\alpha_\omega|\xi_\omega)(x)+\sum_{n=1}^{N-1}I_{\mu_\omega}((\Theta^{-n}\alpha)_\omega|(\alpha_0^{n-1})_\omega\vee\xi_\omega)(x) \\
       =&I_{\mu_\omega}(\alpha_\omega|\xi_\omega)(x)+\sum_{n=1}^{N-1}I_{\mu_{\theta^n\omega}}(\alpha_{\theta^n\omega}|\vee_{i=1}^n(\Theta^i\alpha)_{\theta^n\omega}\vee(\Theta^n\xi)_{\theta^n\omega})(F_\omega^nx)\\
       =&\sum_{n=0}^{N-1}f_n\circ \Theta^n(\omega,x).
  \end{split}
  \end{equation*}
Since $\Theta\xi\prec \xi$, we have $\sigma(\gamma_n)\vee \F_\E\searrow\mathcal{H}$. Due to the assumption $\alpha^-\prec \xi$, we have $\vee_{i=1}^n\Theta^i\alpha\vee \Theta^n\xi\vee \F_\E= \vee_{i=1}^\infty\Theta^i\alpha\vee \Theta^n\xi \vee \F_\E\searrow\mathcal{H}$. 
Since $\alpha$ is finite, $H_\mu(\alpha)<\infty$. Hence,
applying Lemma~\ref{lemma four useful lemma}(4)--(5) to this
decreasing sequence of $\sigma$-algebras, we obtain
  \begin{equation*}
    \lim_{n\to+\infty}f_n(\omega,x)=I_\mu(\alpha|\mathcal{H})(\omega,x)
    \ \mu\mbox{-a.e. and in $L^1(\mu)$},
  \end{equation*}
and
\begin{equation*}
\int_{\mathcal E}\sup_{n\geq0}f_n(\omega,x)\,
d\mu(\omega,x)
\leq H_\mu(\alpha)+1<\infty.
\end{equation*}
Therefore, by Lemma \ref{lemma Breiman}, we have
  \begin{equation*}
       \lim_{N\to+\infty}\frac{1}{N}I_{\mu_\omega}\left((\alpha_0^{N-1})_\omega|\xi_\omega\right)(x)  =  \lim_{N\to+\infty}\frac{1}{N}\sum_{n=0}^{N-1}f_n\circ \Theta^n(\omega,x)=E_\mu(f|\mathcal{I}_\Theta)(\omega,x)
  \end{equation*}
for $\mu$-a.e. $(\omega,x)\in\E$ and in $L^1(\mu)$.
\end{proof}

\begin{corollary}\label{corollary random smb}
  Assume the hypotheses of Lemma \ref{lemma random smb theorem}, and assume further that $\mu\in \mathcal{I}_\P^e(\E)$. Let $(\mu_\omega)_x$ be the disintegration of $\mu$ over $\xi\vee\F_\E$ provided by Corollary \ref{corollary measure disintegration}. Then there exists a measurable set $Y\subset\E$ with $\mu(Y)=1$ such that, for $\P$-a.e. $\omega\in\Omega$ and every $x\in Y_\omega$, if
  \begin{equation*}
    \Xi_\omega(x):=\xi_\omega(x)\cap Y_\omega,
  \end{equation*}
then $(\mu_\omega)_x( \Xi_\omega(x))=1$ and
  \begin{equation*}
    \lim_{N\to+\infty}\frac{-\log(\mu_\omega)_x\left((\alpha_0^{N-1})_\omega(y)\right)}{N}=H_\mu(\alpha|\mathcal{H}),\ \forall y\in \Xi_\omega(x).
  \end{equation*}
\end{corollary}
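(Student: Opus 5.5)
The plan is to combine Lemma \ref{lemma random smb theorem} with ergodicity, translate the conditional information into conditional measures of atoms, and then shrink the full-measure set so that it is saturated with respect to the conditional measures.

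\emph{Identifying the limit.} Since $\mu$ is ergodic, $\mathcal{I}_\Theta$ is $\mu$-trivial. Hence
\[
E_\mu(f|\mathcal{I}_\Theta)=\int f\,d\mu=H_\mu(\alpha|\mathcal{H})\quad\mu\text{-a.e.}
\]
By Lemma \ref{lemma random smb theorem}, there is then a $\mu$-full measure set $Y_1$ on which
\[
\tfrac1N I_{\mu_\omega}\bigl((\alpha_0^{N-1})_\omega|\xi_\omega\bigr)(y)\to H_\mu(\alpha|\mathcal{H}).
\]

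\emph{Information as $-\log$ of atom mass.} Fix an atom $A\in\alpha_0^{N-1}$ (countably many over all $N$). By Lemma \ref{lemma fiber to global} and \eqref{eq integral with respect to conditonal measure},
\[
E_{\mu_\omega}(1_{A_\omega}|\xi_\omega)(y)=E_\mu(1_A|\xi\vee\F_\E)(\omega,y)=(\mu_\omega)_y(A_\omega)\quad\mu\text{-a.e.}
\]
Hence, on a $\mu$-full measure set $Y_2$ (taking a countable intersection over all $N$ and $A$), we have
\[
I_{\mu_\omega}\bigl((\alpha_0^{N-1})_\omega|\xi_\omega\bigr)(y)=-\log(\mu_\omega)_y\bigl((\alpha_0^{N-1})_\omega(y)\bigr)\quad\text{for all }N.
\]
Set $Y_0:=Y_1\cap Y_2\cap\E'$, where $\E'$ is the set from Corollary \ref{corollary measure disintegration}. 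At every point $(\omega,y)\in Y_0$, the quantity $-\frac1N\log(\mu_\omega)_y\bigl((\alpha_0^{N-1})_\omega(y)\bigr)$ converges to $H_\mu(\alpha|\mathcal{H})$.

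\emph{Saturation, the main point.} The limit above is stated with the measure $(\mu_\omega)_y$ attached to $y$ itself, whereas the corollary needs a single measure $(\mu_\omega)_x$ that charges only good points. Define
\[
Y:=\{(\omega,x)\in Y_0:\ (\mu_\omega)_x((Y_0)_\omega)=1\}.
\]
This set is measurable, since $(\omega,x)\mapsto(\mu_\omega)_x((Y_0)_\omega)=E_\mu(1_{Y_0}|\xi\vee\F_\E)(\omega,x)$ is measurable.

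Next, $\mu(Y)=1$. Indeed, $\int E_\mu(1_{Y_0}|\xi\vee\F_\E)\,d\mu=\mu(Y_0)=1$ and the integrand is bounded by $1$, so it equals $1$ $\mu$-a.e. Equivalently, this follows from Corollary \ref{corollary measure disintegration}(3) applied to $\mu_\omega((Y_0)_\omega)=1$.

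Now take $x\in Y_\omega$. By Corollary \ref{corollary measure disintegration}(2), every $y\in\xi_\omega(x)\cap(Y_0)_\omega$ (which lies in $\E'_\omega$) satisfies $(\mu_\omega)_y=(\mu_\omega)_x$. Consequently $(\mu_\omega)_y((Y_0)_\omega)=1$, and so $y\in Y_\omega$. This gives
\[
\Xi_\omega(x)\supset\xi_\omega(x)\cap\E'_\omega\cap(Y_0)_\omega.
\]
The right-hand side has full $(\mu_\omega)_x$-measure, by Corollary \ref{corollary measure disintegration}(2) together with the definition of $Y$. Hence $(\mu_\omega)_x(\Xi_\omega(x))=1$.

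\emph{Conclusion.} For every $y\in\Xi_\omega(x)$ we have $y\in Y_0$ and $(\mu_\omega)_y=(\mu_\omega)_x$. The limit established for $y$ with $(\mu_\omega)_y$ is therefore exactly
\[
\lim_{N\to+\infty}\frac{-\log(\mu_\omega)_x\left((\alpha_0^{N-1})_\omega(y)\right)}{N}=H_\mu(\alpha|\mathcal{H}),
\]
which is the claim.

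The only delicate step is the saturation argument. The rest consists of the routine identification of conditional information with conditional measures of atoms, plus ergodicity.
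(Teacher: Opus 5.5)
Your proposal is correct and follows the paper's proof essentially step for step. The paper also builds $Y_0\subset\E'$ from Lemma \ref{lemma random smb theorem}, ergodicity, and the identification of $I_{\mu_\omega}$ with $-\log(\mu_\omega)_y(\cdot)$, and then sets $Y=Y_0\cap\{(\mu_\omega)_y((Y_0)_\omega)=1\}$. The only cosmetic difference is in checking $(\mu_\omega)_x(\Xi_\omega(x))=1$: the paper notes that the second set lies in $\sigma(\xi\vee\F_\E)$ and therefore contains whole atoms, whereas you obtain $\xi_\omega(x)\cap(Y_0)_\omega\subset Y_\omega$ directly from the consistency property in Corollary \ref{corollary measure disintegration}(2).
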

\begin{proof}[Proof of Corollary \ref{corollary random smb}]

  According to Corollary \ref{corollary measure disintegration},
  there exists a $\mu$-full-measure set $\E^\prime\subset\E$ satisfying $(\mu_\omega)_x(\xi_\omega(x)\cap \E^\prime_\omega)=1$ for all $(\omega,x)\in \E^\prime$. Moreover, for any $x,y\in\E_\omega^\prime$, $\xi_\omega(x)=\xi_\omega(y)$ implies that $(\mu_\omega)_x=(\mu_\omega)_y$.
  By the definition of the information function, we have
  \begin{equation*}
    I_{\mu_\omega}((\alpha_0^{N-1})_\omega|\xi_\omega)(x)=-\sum_{A\in (\alpha_0^{N-1})_\omega}1_A(x)\log E_{\mu_\omega}(1_A|\xi_\omega)(x)=-\log(\mu_\omega)_x((\alpha_0^{N-1})_\omega(x)),\mbox{ $\mu$-a.e.},
  \end{equation*}
where the second equality follows from Lemma \ref{lemma measure disintegration} (1). Since $\mu$ is $\Theta$-ergodic, the $\Theta$-invariant function $E_\mu(f|\mathcal{I}_{\Theta})$ defined in Lemma \ref{lemma random smb theorem} is a constant function, and moreover,
  \begin{equation*}
    E_\mu(f|\mathcal{I}_{\Theta})(\omega,x)=
    \int_\E I_\mu(\alpha|\mathcal{H}) d\mu= H_\mu(\alpha|\mathcal{H}),\ \mbox{$\mu$-a.e.}
  \end{equation*}

By Lemma \ref{lemma random smb theorem} and the preceding identity, there is a measurable set
$Y_0\subset\mathcal E'$ with $\mu(Y_0)=1$ such that
$$
\lim_{N\to\infty}
\frac{-\log(\mu_\omega)_y
((\alpha_0^{N-1})_\omega(y))}{N}
=H_\mu(\alpha | \mathcal H)
$$
for every $(\omega,y)\in Y_0$. Choose the conditional kernel from
Corollary \ref{corollary measure disintegration} on the whole space, changing it on a $\mu$-null set
if necessary, and put
$$
q(\omega,y):=(\mu_\omega)_y((Y_0)_\omega),\qquad
G:=\{(\omega,y)\in\mathcal E':q(\omega,y)=1\}.
$$
The function $q$ is a version of
$ E_\mu(\mathbf 1_{Y_0}\mid
\xi\vee\mathcal F_{\mathcal E})$. Hence
$G\in\sigma(\xi\vee\mathcal F_{\mathcal E})$ and $\mu(G)=1$.
Set $Y:=Y_0\cap G$. If $(\omega,y)\in Y$, then the atom of
$\sigma(\xi\vee\mathcal F_{\mathcal E})$ containing
$(\omega,y)$ is contained in $G$. Since
$(\mu_\omega)_y$ is concentrated on this atom,
$(\mu_\omega)_y(G_\omega)=1$. Together with $q(\omega, y)=1$,
this gives
$$
(\mu_\omega)_y(\xi_\omega(y)\cap Y_\omega)=1.
$$
Finally, if $x,y\in Y_\omega$ and
$\xi_\omega(x)=\xi_\omega(y)$, then
$(\mu_\omega)_x=(\mu_\omega)_y$. Applying the convergence on
$Y_0$ at $y$ therefore gives
$$
\lim_{N\to\infty}
\frac{-\log(\mu_\omega)_x
((\alpha_0^{N-1})_\omega(y))}{N}
=H_\mu(\alpha | \mathcal H),
$$
 for every $x\in Y_\omega$ and every
$y\in\xi_\omega(x)\cap Y_\omega$.
\end{proof}

\subsection{Local entropy}
Let $(\E,F)$ be a two-sided continuous bundle RDS. For $n\in\mathbb{N}$, $\omega\in\Omega$, $x\in \E_\omega$, and $\epsilon>0$, we define the $n$-step fiber Bowen ball by
\begin{equation*}
  B_\omega(x,\epsilon,n):=\{y\in \E_\omega:\ \max_{0\leq k\leq n-1}d(F_\omega^kx,F_\omega^ky)<\epsilon\}.
\end{equation*}
\begin{definition}\label{def local entropy}
  Given a Borel probability measure $\upsilon$ concentrated on $\E_\omega$, the fiber lower and upper local entropies of $F$ are defined, respectively, by
  \begin{equation*}
    \begin{split}
       \underbar{h}_\upsilon(F,\omega,x) & =\lim_{\epsilon\to 0}\liminf_{n\to+\infty}\frac{-\log\upsilon(B_\omega(x,\epsilon,n))}{n}, \\
         \overline{h}_\upsilon(F,\omega,x) & =\lim_{\epsilon\to 0}\limsup_{n\to+\infty}\frac{-\log\upsilon(B_\omega(x,\epsilon,n))}{n}.
    \end{split}
  \end{equation*}
\end{definition}
When $\mu\in \mathcal{I}_\P^e(\E)$, Zhu proved in \cite[Theorem 2.1]{Zhu2009} that
\begin{equation}\label{eq brin-katok}
  h_\mu(F)=\underline{h}_{\mu_\omega}(F,\omega,x)=\overline{h}_{\mu_\omega}(F,\omega,x)\mbox{ for $\mu$-a.e. }(\omega,x)\in\E,
\end{equation}
where $\mu_\omega$ is the disintegration of $\mu$ over $\mathcal{F}_\E$.

\section{Construction of the partition subordinate to the fiber local unstable sets}\label{section proof thm 2}
\subsection{Proof of Theorem \ref{thm local entropy}, Part (1)}
Let $\delta>0$ be as in the statement of Theorem \ref{thm local entropy}. Let $\mu\in \mathcal{I}_\P^e(\E)$ with $h_\mu(F)>0$.
Let $\{\mathcal{Q}_i\}_{i\in\mathbb{Z}_+}$ be a sequence of finite Borel measurable partitions of $X$ satisfying
\begin{enumerate}
  \item[(i)] $\mathcal{Q}_{i+1}$ is finer than $\mathcal{Q}_i$ for $i\geq 1$, diam$\mathcal{Q}_1\leq \delta$, and $\lim_{i\to+\infty}$diam$(\mathcal{Q}_i)=0$,
  \item[(ii)]  $\int_{\Omega} \mu_{\omega}(\partial \mathcal{Q}_i\cap \E_{\omega}) d\mathbb{P}(\omega)=0$ for every $i\in \mathbb{Z}_{+}$.
\end{enumerate}
We define the finite measurable partition $\hat{\mathcal{Q}}_i:=\{(\Omega\times A)\cap \E:\ A\in\mathcal{Q}_i\}$ to be the partition induced by $\mathcal{Q}_i$ on $\E$, and define
\begin{equation}\label{eq def beta i}
  \beta_i:=\hat{\mathcal{Q}_i}\vee \Theta^{-1}\hat{\mathcal{Q}}_1.
\end{equation}
Then $\beta_i$ is a finite measurable partition of $\E$ satisfying
\begin{enumerate}
  \item $\beta_1\prec \beta_2\prec \cdots$, and $\lim_{i\to+\infty}\sup_{\omega\in\Omega}$diam$(\beta_i)_\omega=0$,
  \item $\sup_{\omega\in\Omega}$diam$(\beta_1)_\omega\leq$diam$\mathcal{Q}_1\leq \delta$ and $\sup_{\omega\in\Omega}\max_{A\in \beta_1}{\rm diam}(F_\omega(A))\leq {\rm diam}\mathcal{Q}_1\leq \delta$,
  \item for every $i\in\mathbb{Z}_+$, $\int_\Omega\mu_\omega(\partial (\beta_i)_\omega)d\P(\omega)=0$.
\end{enumerate}
The first property of $\beta_i$ guarantees that the countably generated measurable partition $\vee_{i=1}^\infty\beta_i$ is a fiberwise topological generator of $F$, i.e., $x=y$ whenever $\Theta^n(\omega,x)$ and $\Theta^n(\omega,y)$ belong to the same element of $\vee_{i=1}^\infty\beta_i$ for all $n\in\mathbb{Z}$. As a consequence of \cite[Theorem 2.3.3]{bogenschiitz1993equilibrium} (see also \cite[Theorem 1.1.2]{KL06}), one has
\begin{equation}\label{eq beta approach entropy}
  \sup_{i\geq 1}h_\mu(F,\beta_i)=h_\mu(F,\vee_{i=1}^\infty\beta_i)=h_\mu(F).
\end{equation}
Set $k_1=0$. We can find inductively a strictly increasing sequence of integers $k_i$ for $i=1,2,...$ such that, with 
 \begin{eqnarray}\label{defn-4.1.1}
  \alpha_q:=\vee_{i=1}^q\Theta^{k_i}\beta_i,
\end{eqnarray}
the following inequality holds for every $q\geq 2$ and $p=1,2,\ldots,q-1$:
\begin{equation}\label{eq h-h<1/p2q-p}
  H_\mu(\alpha_p|(\alpha_{q-1})^-\vee \F_\E)-H_\mu(\alpha_p|(\alpha_{q})^-\vee \F_\E)<\frac{1}{p\cdot 2^{q-p}}.
\end{equation}
The integer $k_q$ is chosen inductively as follows. Once $k_p$ has been chosen for $p\geq 1$, applying Lemma \ref{lemma three of entropy} (3) to $\alpha=\alpha_p$, $\beta=\alpha_{q-1}$ and $\gamma=\beta_q$ for each   $q> p\geq 1$, we can find a $k_q$ as desired.

We define
\begin{equation}\label{defn-4.1.2}
  \mathcal{P}:=\bigvee_{p=1}^\infty\alpha_p,\mbox{ and }\xi:=\mathcal{P}^-=\bigvee_{n=1}^\infty\Theta^n\mathcal{P}.
\end{equation}
Directly, we have
\begin{equation*}
  \Theta\xi\prec \xi,\ (\alpha_p)^-\prec \xi,\mbox{ and }\alpha_p\nearrow\mathcal{P}.
\end{equation*}

By construction, we have $\overline{\xi_\omega(x)}\subset \overline{(F_{\theta^{-1}\omega}(\beta_1)_{\theta^{-1}\omega})(x)}$, and
\begin{equation*}
  F_\omega^{-n}\overline{\xi_\omega(x)}=\overline{F_\omega^{-n}\xi_\omega(x)}=\overline{(\Theta^{-n}\xi)_{\theta^{-n}\omega}(F_\omega^{-n}x)}\subset \overline{(\beta_1)_{\theta^{-n}\omega}(F_\omega^{-n}x)},\ \forall n\geq 1.
\end{equation*}
Thus, we have
\begin{equation}\label{eqn-4.1.1}
  \mbox{diam} (F_\omega^{-n}\overline{\xi_\omega(x)})\leq \max\{\mbox{diam} (F_{\theta^{-1}\omega}(\beta_1)_{\theta^{-1}\omega},\mbox{diam}(\beta_1)_{\theta^{-n}\omega}\}\leq \delta,\ \forall n\geq 0.
\end{equation}
For any $j\geq 1$ and $i\geq 1$, we have
\begin{equation*}\label{eqn-4.1.2}
  F_\omega^{-(k_j+i)}\overline{\xi_\omega(x)}=\overline{F_\omega^{-(k_j+i)}\xi_\omega(x)}=\overline{(\Theta^{-(k_j+i)}\xi)_{\theta^{-(k_j+i)}\omega}(F_\omega^{-(k_j+i)}x)}
  \subset \overline{(\beta_j)_{\theta^{-(k_j+i)}\omega}(F_\omega^{-(k_j+i)}x)}.
\end{equation*}
Moreover,
\begin{equation*}
\operatorname{diam}(F_\omega^{-(k_j+i)}\overline{\xi_\omega(x)})
\leq \sup_{\omega\in\Omega} \operatorname{diam}(\beta_j)_\omega\to 0
\quad\text{as }j\to +\infty.
\end{equation*}
Combining this estimate with the preceding inclusion and \eqref{eqn-4.1.1}, we obtain $\overline{\xi_\omega(x)}\subset W_\omega^u(x,\delta)$. This completes the proof of Part (1) of Theorem \ref{thm local entropy}.

\subsection{Proof of Theorem \ref{thm local entropy}, Part (2)}
Let $\alpha_p$ be the partition defined in \eqref{defn-4.1.1}, and let \(\xi\) be the countably generated measurable partition defined in \eqref{defn-4.1.2}.
We define
\begin{equation}\label{defn-4.2.1}
   \mathcal{H}_p:=\bigcap_{n=0}^\infty\left(\sigma\{(\alpha_p)^-\vee \Theta^n\xi\}\vee \F_\E\right).
\end{equation}
We then have the following lemma.
\begin{lemma}\label{lemma entropy approach by alpha and H}
We have the following equalities
\begin{equation}\label{eq entropy approach by alpha and H}
  h_\mu(F)=\sup_{p\geq 1}h_\mu(F,\alpha_p),\mbox{ and }h_\mu(F)=\sup_{p\geq 1}H_\mu(\alpha_p|\mathcal{H}_p).
\end{equation}
\end{lemma}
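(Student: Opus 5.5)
We need to prove $h_\mu(F)=\sup_p h_\mu(F,\alpha_p)$ and $h_\mu(F)=\sup_p H_\mu(\alpha_p|\mathcal H_p)$.

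\emph{First equality.} Since $\alpha_p=\bigvee_{i=1}^p\Theta^{k_i}\beta_i$ is finite, $h_\mu(F,\alpha_p)\le h_\mu(F)$. Conversely, $\Theta^{k_p}\beta_p\prec\alpha_p$, and $h_\mu(F,\Theta^{k}\beta)=h_\mu(F,\beta)$: the join $\bigvee_{j=0}^{n-1}\Theta^{-j}\Theta^{k}\beta$ differs from $\beta_0^{n-1}$ by a shift of $k$ steps, and $\Theta$ preserves both $\mu$ and $\F_\E$, so the two entropies differ by $O(|k|)H_\mu(\beta|\F_\E)$, which vanishes after dividing by $n$. Hence
\[
h_\mu(F,\alpha_p)\ge h_\mu(F,\beta_p),
\]
and \eqref{eq beta approach entropy} gives $\sup_p h_\mu(F,\alpha_p)\ge h_\mu(F)$.

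\emph{Second equality, upper bound.} By Lemma \ref{lemma entropy computation}, $h_\mu(F,\alpha_p)=H_\mu(\alpha_p|(\alpha_p)^-\vee\F_\E)$. Each $\sigma$-algebra $\sigma((\alpha_p)^-\vee\Theta^n\xi)\vee\F_\E$ contains $(\alpha_p)^-\vee\F_\E$, so $\mathcal H_p$ does too. Therefore
\[
H_\mu(\alpha_p|\mathcal H_p)\le h_\mu(F,\alpha_p)\le h_\mu(F).
\]

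\emph{Second equality, lower bound.} We show $H_\mu(\alpha_p|\mathcal H_p)\ge H_\mu(\alpha_p|(\alpha_p)^-\vee\F_\E)-\frac1p$. Since $\Theta\xi\prec\xi$, the $\sigma$-algebras $\sigma((\alpha_p)^-\vee\Theta^n\xi)\vee\F_\E$ decrease to $\mathcal H_p$. By Lemma \ref{lemma four useful lemma}(5) (decreasing case),
\[
H_\mu(\alpha_p|\mathcal H_p)=\lim_{n}H_\mu\bigl(\alpha_p|(\alpha_p)^-\vee\Theta^n\xi\vee\F_\E\bigr).
\]
Now $\xi=\mathcal P^-=\bigvee_q(\alpha_q)^-$, because $\alpha_q\nearrow\mathcal P$. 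Hence $\Theta^n\xi=\bigvee_q\Theta^n(\alpha_q)^-$, and for $q\ge p$ we have $(\alpha_p)^-\prec(\alpha_q)^-$.

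The key step is to compare with the chain estimate \eqref{eq h-h<1/p2q-p}. Telescoping it gives, for every $q>p$,
\[
H_\mu(\alpha_p|(\alpha_p)^-\vee\F_\E)-H_\mu(\alpha_p|(\alpha_q)^-\vee\F_\E)<\sum_{r=p+1}^{q}\frac1{p2^{r-p}}<\frac1p .
\]
By increasing martingale convergence in $q$ (Lemma \ref{lemma four useful lemma}(5)), this yields
\[
H_\mu(\alpha_p|\xi\vee\F_\E)\ge H_\mu(\alpha_p|(\alpha_p)^-\vee\F_\E)-\tfrac1p,
\]
using that $(\alpha_p)^-\vee\xi=\xi$.

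The obstacle is that we need this with $\Theta^n\xi$ in place of $\xi$, uniformly in $n$. There are two candidate routes.
\begin{itemize}
\item Apply Lemma \ref{lemma three of entropy}(3) with $\gamma=\beta_q$. This is how the $k_q$ were chosen, so a careful reading of the construction should show that \eqref{eq h-h<1/p2q-p} holds after replacing $(\alpha_q)^-$ by $(\alpha_{q-1})^-\vee\Theta^{n}(\cdots)$. I would check whether it persists when $k_q$ is shifted by $n$.
\item Use invariance: $H_\mu(\alpha_p|(\alpha_p)^-\vee\Theta^n\xi\vee\F_\E)=H_\mu(\Theta^{-n}\alpha_p|\Theta^{-n}(\alpha_p)^-\vee\xi\vee\F_\E)$, and then use the cocycle equation to relate this to $\frac1n H_\mu((\alpha_p)_0^{n-1}|\xi\vee\F_\E)$. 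This average has $\liminf\ge h_\mu(F,\alpha_p)-\frac1p$ by the telescoped estimate, applied term by term.
\end{itemize}
I expect the second route to be cleaner. The terms $a_n:=H_\mu(\alpha_p|(\alpha_p)^-\vee\Theta^n\xi\vee\F_\E)$ are monotone in $n$. Under the cocycle decomposition,
\[
\frac1n H_\mu\bigl((\alpha_p)_0^{n-1}|\xi\vee\F_\E\bigr)
\]
is a Cesàro average of the $a_j$ for $j<n$, because $(\alpha_p)^-\prec\xi$ lets each summand be rewritten via $\Theta$-invariance. Hence $\lim_n a_n$ equals the limit of this average.

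To bound the average below, I would use $\xi\vee\bigvee_{j\ge0}\Theta^{-j}\alpha_q\supset\ldots$ and the telescoped $\frac1p$ bound applied to each summand $H_\mu(\Theta^{-j}\alpha_p|\ldots)$. This gives $\lim_n a_n\ge h_\mu(F,\alpha_p)-\frac1p$. Taking the supremum over $p$ and using the first equality finishes the proof.
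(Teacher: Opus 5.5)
Your first equality and the upper half of the second one are exactly the paper's argument. So is your telescoping of \eqref{eq h-h<1/p2q-p} followed by $q\to\infty$, which gives $H_\mu(\alpha_p|\xi\vee\F_\E)\ge h_\mu(F,\alpha_p)-\frac1p$.

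The gap is in what comes next. You treat the passage from $\xi$ to $\Theta^n\xi$ as an obstacle and hand it to two speculative routes. As a result, the lower bound $H_\mu(\alpha_p|\mathcal{H}_p)\ge h_\mu(F,\alpha_p)-\frac1p$ is never actually established in your write-up. In fact there is no obstacle, because the monotonicity runs in your favour. Since $\Theta^n\xi\prec\xi$ and $(\alpha_p)^-\prec\xi$, every $\sigma$-algebra $\sigma((\alpha_p)^-\vee\Theta^n\xi)\vee\F_\E$ is contained in the $n=0$ member $\sigma(\xi)\vee\F_\E$. Hence $\mathcal{H}_p\subseteq\sigma(\xi)\vee\F_\E$, and conditioning on a coarser $\sigma$-algebra can only increase entropy:
\[
H_\mu(\alpha_p|\mathcal{H}_p)\ \ge\ H_\mu(\alpha_p|\xi\vee\F_\E)\ \ge\ h_\mu(F,\alpha_p)-\tfrac1p .
\]
Equivalently, your $a_n$ are nondecreasing in $n$. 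This is the decreasing case of Lemma \ref{lemma four useful lemma}(5), which you already invoked. So $\lim_n a_n\ge a_0=H_\mu(\alpha_p|\xi\vee\F_\E)$, and taking the supremum over $p$ finishes the proof. This one line is precisely how the paper concludes.

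Your first route, revisiting the choice of $k_q$ under shifts, is beside the point. Your Cesàro route rests on a correct identity, $\frac{1}{n+1}H_\mu((\alpha_p)_0^{n}|\xi\vee\F_\E)=\frac{1}{n+1}\sum_{j=0}^{n}a_j$. However, the per-summand bound it needs, $a_j\ge h_\mu(F,\alpha_p)-\frac1p$, is exactly the monotonicity fact above, which already closes the argument without any averaging.
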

\begin{proof}[Proof of Lemma \ref{lemma entropy approach by alpha and H}]
  It is clear that $h_\mu(F)\geq \sup_{p\geq 1}h_\mu(F,\alpha_p)$. Note that
\begin{equation*}
  h_\mu(F,\alpha_p)=h_\mu(F,\vee_{i=1}^p\Theta^{k_i}\beta_i)\geq \max_{1\leq i\leq p}h_\mu(F,\Theta^{k_i}\beta_i)=\max_{1\leq i\leq p}h_\mu(F,\beta_i).
\end{equation*}
Taking the supremum over $p\geq 1$ and using $\eqref{eq beta approach entropy}$, we obtain $\sup_{p\geq 1}h_\mu(F,\alpha_p)\geq h_\mu(F)$. Thus, we have $h_\mu(F)=\sup_{p\geq 1}h_\mu(F,\alpha_p)$.

Given $p,m\in\mathbb{N}$, summing $\eqref{eq h-h<1/p2q-p}$ for $q=p+1,\ldots,p+m$ yields
\begin{equation*}
  H_\mu(\alpha_p|(\alpha_p)^-\vee\F_\E)-H_\mu(\alpha_p|(\alpha_{p+m})^-\vee\F_\E)<\frac{1}{p}.
\end{equation*}
Letting $m\to+\infty$, one has
$
  H_\mu(\alpha_p|(\alpha_p)^-\vee\F_\E)-H_\mu(\alpha_p|\xi\vee\F_\E)<\frac{1}{p}.
$ By Lemma \ref{lemma entropy computation}, $h_\mu(F,\alpha_p)-H_\mu(\alpha_p|\xi\vee\F_\E)<\frac{1}{p}$. Since $(\alpha_p)^-\vee\F_\E\prec (\alpha_p)^-\vee \Theta^n\xi\vee \F_\E\prec \xi\vee \F_\E$ for all $n\in\mathbb{Z}$, and since $\mathcal{H}_p:=\bigcap_{n=0}^\infty(\sigma\{(\alpha_p)^-\vee \Theta^n\xi\}\vee \F_\E)$, we have
\begin{equation*}
  h_\mu(F,\alpha_p)-\frac{1}{p}\leq H_\mu(\alpha_p|\xi\vee\F_\E)\leq H_\mu(\alpha_p|\mathcal{H}_p)\leq H_\mu(\alpha_p|(\alpha_p)^-\vee\F_\E)=h_\mu(F,\alpha_p).
\end{equation*}
Taking the supremum over $p\geq 1$, we obtain
$
  h_\mu(F)=\sup_{p\geq 1}h_\mu(F,\alpha_p)=\sup_{p\geq 1}H_\mu(\alpha_p|\mathcal{H}_p).
$
\end{proof}

Let $\mu_\omega$ denote the disintegration of $\mu$ over $\F_\E$, and let $(\mu_\omega)_x$ be the disintegration of $\mu_\omega$ over $\xi_\omega$ provided by Corollary \ref{corollary measure disintegration}. We apply Lemma \ref{lemma random smb theorem} and Corollary \ref{corollary random smb} with $\alpha=\alpha_p$ and $\mathcal{H}=\mathcal{H}_p$, where $\alpha_p$ is defined in \eqref{defn-4.1.1} and $\mathcal{H}_p$ is defined in \eqref{defn-4.2.1}.
Note that $\alpha=\alpha_p$ is a finite measurable partition. Then, for $\mu$-a.e. $(\omega,x)\in \E$ and $(\mu_\omega)_x$-a.e. $y\in \E_\omega$,
\begin{equation}\label{eq proof smb in part 2}
  \lim_{N\to+\infty}\frac{-\log(\mu_\omega)_x(\vee_{k=0}^{N-1}(\Theta^{-k}\alpha_p)_\omega(y))}{N}=H_\mu(\alpha_p|\mathcal{H}_p).
\end{equation}
The following lemma can be found in \cite[Lemma 3.4]{Fengscichina2022}.
\begin{lemma}\label{lemma huang}
  Let $\upsilon$ be a Borel probability measure on a compact metric space $X$. Let $\eta$ be a finite partition of $X$, and let $\{\gamma_i\}_{i\in \mathbb{Z}_+}$  be a refining sequence of finite partitions of $X$, that is $\gamma_1\prec\cdots \prec \gamma_n\prec\cdots.$
   If there exists a constant $c$ such that
  \begin{equation*}
    \lim_{n\to+\infty}\frac{-\log\nu(\eta\vee\gamma_n(x))}{n}=c,\ \mbox{for $\nu$-a.e. $x\in X$},
  \end{equation*}
then the following conclusion holds:
  \begin{equation*}
    \lim_{n\to+\infty}\frac{-\log\nu(\gamma_n(x))}{n}=c,\ \mbox{for $\nu$-a.e. $x\in X$}.
  \end{equation*}
\end{lemma}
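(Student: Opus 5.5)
The plan is to prove the two inequalities $\limsup\le c$ and $\liminf\ge c$ separately. The first is immediate from monotonicity of partitions. The second comes from a counting argument combined with the Borel--Cantelli lemma, which exploits the fact that $\eta$ has only finitely many atoms. Throughout, $\nu$ and $\upsilon$ denote the same measure, and I write $|\eta|$ for the cardinality of $\eta$.

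\textbf{Upper bound.} For every $x$ and $n$ we have $\eta\vee\gamma_n(x)\subset\gamma_n(x)$. Hence $-\log\nu(\gamma_n(x))\le-\log\nu(\eta\vee\gamma_n(x))$, and therefore
\[
\limsup_{n\to+\infty}\frac{-\log\nu(\gamma_n(x))}{n}\le c
\]
for $\nu$-a.e. $x$. If $c=0$, this already gives the conclusion, since the quantities are nonnegative.

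\textbf{Lower bound.} I will assume first that $c\in(0,+\infty)$. Fix $\epsilon\in(0,c/2)$ and define
\[
D_n:=\{x:\ \nu(\gamma_n(x))>e^{-n(c-2\epsilon)}\ \text{ and }\ \nu(\eta\vee\gamma_n(x))\le e^{-n(c-\epsilon)}\}.
\]
The key step is the estimate $\nu(D_n)\le|\eta|\,e^{-n\epsilon}$, which I would prove as follows.
\begin{enumerate}
\item Let $A$ range over the atoms of $\gamma_n$ with $\nu(A)>e^{-n(c-2\epsilon)}$. These atoms are disjoint and each has measure exceeding $e^{-n(c-2\epsilon)}$, so there are fewer than $e^{n(c-2\epsilon)}$ of them.
\item Inside such an $A$, the set $D_n\cap A$ is a union of sets $A\cap E$ with $E\in\eta$ and $\nu(A\cap E)\le e^{-n(c-\epsilon)}$. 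There are at most $|\eta|$ of these pieces, so $\nu(D_n\cap A)\le|\eta|e^{-n(c-\epsilon)}$.
\item Summing over the atoms $A$ gives $\nu(D_n)\le e^{n(c-2\epsilon)}\cdot|\eta|e^{-n(c-\epsilon)}=|\eta|e^{-n\epsilon}$.
\end{enumerate}
Since $\sum_n\nu(D_n)<\infty$, the Borel--Cantelli lemma shows that a.e. $x$ lies in only finitely many $D_n$. By hypothesis, for a.e. $x$ we also have $\nu(\eta\vee\gamma_n(x))\le e^{-n(c-\epsilon)}$ for all large $n$. On the intersection of these two full-measure sets, for all large $n$ we must have $\nu(\gamma_n(x))\le e^{-n(c-2\epsilon)}$. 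Hence
\[
\liminf_{n\to+\infty}\frac{-\log\nu(\gamma_n(x))}{n}\ge c-2\epsilon
\]
for a.e. $x$. Letting $\epsilon\searrow0$ along a countable sequence gives the lower bound $c$.

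\textbf{The case $c=+\infty$.} If one wants to allow $c=+\infty$, the same argument works with $c-\epsilon$ replaced by $M+1$ and $c-2\epsilon$ replaced by $M$, for every $M\in\mathbb N$.

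The only step requiring real care is the counting estimate for $\nu(D_n)$. It depends on $\eta$ being finite: each heavy atom of $\gamma_n$ is cut by $\eta$ into boundedly many pieces, and the loss $|\eta|$ in that estimate is absorbed by the exponential gap $e^{-n\epsilon}$. The refining hypothesis $\gamma_1\prec\gamma_2\prec\cdots$ is not actually needed for this argument.
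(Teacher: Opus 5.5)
Your proof is correct and complete. The paper gives no proof of this lemma: it quotes it from \cite[Lemma 3.4]{Fengscichina2022} and only applies it, with $\eta=\vee_{i=0}^{k_p-1}(\Theta^{-i}\alpha_p)_\omega$ and $\gamma_N=\vee_{i=0}^{N-1}(\Theta^{-(k_p+i)}\alpha_p)_\omega$. So your argument is a self-contained justification of a cited result.

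Your key estimate holds. $D_n$ is a union of atoms of $\eta\vee\gamma_n$, and at most $e^{n(c-2\epsilon)}$ atoms of $\gamma_n$ have mass above $e^{-n(c-2\epsilon)}$. Each such atom meets $D_n$ in at most $|\eta|$ pieces, each of mass $\le e^{-n(c-\epsilon)}$. Borel--Cantelli together with the hypothesis then gives the lower bound. You are also right that the refinement $\gamma_1\prec\gamma_2\prec\cdots$ is never used.

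For comparison, a more economical form of the same idea uses the ratio of the two measures instead of thresholds tied to $c$. Set $G_n:=\{x:\ \nu(\eta\vee\gamma_n(x))\le e^{-n\epsilon}\nu(\gamma_n(x))\}$. Then $\nu(G_n\cap A)\le|\eta|e^{-n\epsilon}\nu(A)$ for every atom $A$ of $\gamma_n$, so $\nu(G_n)\le|\eta|e^{-n\epsilon}$ without counting heavy atoms. Borel--Cantelli then gives
\[
0\le\frac{1}{n}\log\frac{\nu(\gamma_n(x))}{\nu(\eta\vee\gamma_n(x))}<\epsilon\quad\text{for all large $n$, for $\nu$-a.e. $x$.}
\]
This needs no hypothesis on the limit. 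It therefore transfers $\liminf$ and $\limsup$ separately and covers $c=0$ and $c=+\infty$ with no case distinction. Your threshold version proves the same thing in substance, but it needs the value of $c$ in advance to define $D_n$.
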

Taking $\eta=\vee_{i=0}^{k_p-1}(\Theta^{-i}\alpha_p)_\omega$ and $\gamma_N=\vee_{i=0}^{N-1}(\Theta^{-(k_p+i)}\alpha_p)_\omega$ in Lemma \ref{lemma huang}, we obtain, for $\mu$-a.e. $(\omega,x)\in \E$ and $(\mu_\omega)_x$-a.e. $y\in \E_\omega$,
\begin{equation*}
  \lim_{N\to+\infty}\frac{-\log(\mu_\omega)_x(\vee_{i=0}^{N-1}(\Theta^{-(k_p+i)}\alpha_p)_\omega(y))}{N}=H_\mu(\alpha_p|\mathcal{H}_p).
\end{equation*}
Now, for any $\epsilon>0$, since $\lim_{j\to+\infty}\sup_{\omega\in\Omega}\operatorname{diam}(\beta_j)_\omega=0$, we can find $N(\epsilon)\in\mathbb{N}$ such that $\sup_{\omega\in\Omega}\operatorname{diam}(\beta_j)_\omega\leq \epsilon$ for $j\geq N(\epsilon)$. Note that $\beta_p\prec \Theta^{-k_p}\alpha_p$; hence, when $p\geq N(\epsilon)$ and $n\geq 1$, we have
\begin{equation*}
  \left(\vee_{i=0}^{n-1}\Theta^{-(k_p+i)}\alpha_p\right)_\omega(y)\subset B_\omega(y,\epsilon,n).
\end{equation*}
Consequently, for $\mu$-a.e. $(\omega,x)\in \E$ and $(\mu_\omega)_x$-a.e. $y\in \E_\omega$, we have
\begin{equation*}
  \begin{split}
     \overline{h}_{(\mu_\omega)_x}(F,\omega,y) &=\lim_{\epsilon\to 0} \limsup_{n\to+\infty}\frac{-\log(\mu_\omega)_x(B_\omega(y,\epsilon,n))}{n}\\
       & \leq \sup_{p\geq 1}\limsup_{n\to+\infty}\frac{-\log (\mu_\omega)_x\left(\vee_{i=0}^{n-1}\Theta^{-(k_p+i)}\alpha_p\right)_\omega(y)}{n}\\
       &=\sup_{p\geq 1}H_\mu(\alpha_p|\mathcal{H}_p)=h_\mu(F),
  \end{split}
\end{equation*}
where the last equality follows from Lemma \ref{lemma entropy approach by alpha and H}, and we may take a countable sequence $\epsilon_k\to 0$ by taking a countable intersection
of the corresponding full-measure sets.

To finish the proof, it suffices to show that, for each fixed $p$, $\underbar{h}_{(\mu_\omega)_x}(F,\omega,y)\geq H_\mu(\alpha_p|\mathcal{H}_p)$ for $\mu$-a.e. $(\omega,x)\in\E$ and $(\mu_\omega)_x$-a.e. $y\in\E_\omega$. To simplify the notation, we denote $\alpha_p$ and $\mathcal{H}_p$ by $\alpha$ and $\H$, respectively, and set $c:=H_\mu(\alpha_p|\mathcal{H}_p)$.
\begin{lemma}\label{lemma lower bound for local entropy}
	There exists a sufficiently small constant $\epsilon_0>0$, independent of $\mu$ and $\omega\in\Omega$, such that, for every $\epsilon\in(0,\epsilon_0)$, there exist $\tau>0$ and a measurable set $\L\subset \E$ satisfying $\mu_\omega(\L_\omega)>1-\epsilon^{1/4}$ for $\P$-a.e. $\omega\in\Omega$ and having the following property. For any $x\in \L_\omega$, there exists a measurable subset $D(\omega,x)\subset \E_\omega$ satisfying $(\mu_\omega)_x(D(\omega,x))>1-4\epsilon^{1/4}$ and
	\begin{equation}\label{eq lower bound for local entropy}
		\liminf_{n\to+\infty}\frac{-\log(\mu_\omega)_x(B_\omega(y,\tau,n))}{n}\geq c-3(\Delta+\epsilon),\ \forall y\in D(\omega,x),
	\end{equation}
where
	\begin{equation}\label{eq delta}
	\Delta=	2\sqrt{\epsilon}\log(\#\alpha-1)-2\sqrt{\epsilon}\log 2\sqrt{\epsilon}-(1-2\sqrt{\epsilon})\log(1-2\sqrt{\epsilon}).
	\end{equation}
\end{lemma}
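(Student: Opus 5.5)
The plan is the standard ``Bowen balls meet few cylinders'' argument of Ledrappier--Young and Feng--Gao--Huang--Lian, carried out fiberwise for the conditional measures $(\mu_\omega)_x$. It combines three ingredients: a boundary-neighbourhood estimate for the finite partition $\alpha=\alpha_p$, a frequency bound from the ergodic theorem, and the local Shannon--McMillan--Breiman statement \eqref{eq proof smb in part 2}. A Borel--Cantelli counting argument then converts cylinder estimates into Bowen-ball estimates.

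\emph{Step 1: boundary neighbourhood.} Recall $\alpha=\vee_{i=1}^p\Theta^{k_i}\beta_i$ and $\int_\Omega\mu_\omega(\partial(\beta_i)_\omega)\,d\P=0$ by property (3) of the $\beta_i$. Using invariance, $\mu\big(\{(\omega,x): x\in\partial\alpha_\omega\}\big)=0$. For $\tau>0$, let $U_\tau:=\{(\omega,x): B_\omega(x,\tau)\not\subset\alpha_\omega(x)\}$, the points within $\tau$ of $\partial\alpha_\omega$ in the fiber. These sets decrease to a subset of the boundary set as $\tau\searrow 0$, so we can fix $\tau$ with $\mu(U_\tau)<\epsilon$. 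A delicate point is that $\Theta^{k_i}\beta_i$ involves the maps $F^{-k_i}$, so ``within $\tau$ of the boundary'' must be understood through these finitely many homeomorphisms. If needed, one instead defines $U_\tau$ directly by the condition $B_\omega(x,\tau)\not\subset\alpha_\omega(x)$ and uses continuity of the finitely many maps $F_{\theta^{\cdot}\omega}^{\pm k_i}$ to get $\bigcap_\tau U_\tau\subset\partial\alpha$. By Birkhoff's ergodic theorem and Egorov's theorem there are $N_0$ and a set $G_1$ with $\mu(G_1)>1-\epsilon$ such that, for $n\ge N_0$ and $(\omega,y)\in G_1$,
\[
\#\{0\le k<n:\Theta^k(\omega,y)\in U_\tau\}\le 2\sqrt\epsilon\, n .
\]
Similarly, \eqref{eq proof smb in part 2} and Egorov give $G_2$ with $\mu(G_2)>1-\epsilon$ on which $(\mu_\omega)_x\big((\alpha_0^{n-1})_\omega(y)\big)\le e^{-n(c-\epsilon)}$ for $n\ge N_0$; this is formulated as a set of pairs and integrated against $(\mu_\omega)_x\,d\mu_\omega(x)\,d\P$ via Corollary~\ref{corollary measure disintegration}(3). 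Markov's inequality applied to $x\mapsto(\mu_\omega)_x(G_\omega^c)$, with $G=G_1\cap G_2$, produces $\L$ with $\mu_\omega(\L_\omega)>1-\epsilon^{1/4}$ and, for $x\in\L_\omega$, $(\mu_\omega)_x(G_\omega)>1-O(\epsilon^{1/4})$. I expect to have to pass to a fiberwise version of these statements for $\P$-a.e.\ $\omega$.

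\emph{Step 2: combinatorial count.} Suppose $y_0\in G_{1,\omega}$ and $n\ge N_0$. If $z\in B_\omega(y_0,\tau,n)$, then at every time $k$ with $\Theta^k(\omega,y_0)\notin U_\tau$ the points $F^k_\omega z$ and $F^k_\omega y_0$ lie in the same element of $\alpha$. Hence the $\alpha_0^{n-1}$-itinerary of $z$ differs from that of $y_0$ only at the at most $2\sqrt\epsilon n$ bad times, and at each such time there are at most $\#\alpha-1$ alternatives. Bounding $\sum_{j\le 2\sqrt\epsilon n}\binom nj(\#\alpha-1)^j$ by $e^{n\Delta}$, with $\Delta$ as in \eqref{eq delta}, shows that $B_\omega(y_0,\tau,n)$ meets at most $e^{n\Delta}$ elements of $(\alpha_0^{n-1})_\omega$.

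\emph{Step 3: Borel--Cantelli, the main obstacle.} The difficulty is that SMB controls only cylinders of good points, whereas a Bowen ball may carry much of its mass on bad points, and there is no density theorem available in a general metric space. To handle this, fix $x\in\L_\omega$ and write $\nu=(\mu_\omega)_x$. Set
\[
E_n:=\{y\in G_\omega:\nu(B_\omega(y,\tau/2,n))>e^{-n(c-3(\Delta+\epsilon))}\}.
\]
Each $y\in E_n$ lies in the cylinder $P=(\alpha_0^{n-1})_\omega(y)$, which satisfies $\nu(P)\le e^{-n(c-\epsilon)}$. For every such cylinder $P$ meeting $E_n$, pick a representative $y_P\in P\cap E_n$. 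Any point $w$ lies in $B(y_P,\tau/2,n)$ only for those $P$ whose representatives $y_P$ lie in $B(w,\tau/2,n)$. All these representatives lie within $\tau$ of a fixed one $y_0$ in the Bowen metric, and $y_0\in G_1$, so by Step 2 there are at most $e^{n\Delta}$ such $P$. Summing the masses $\nu(B(y_P,\tau/2,n))$ therefore gives
\[
\#\{P\}\le e^{n\Delta}e^{n(c-3\Delta-3\epsilon)}, \qquad \nu(E_n)\le\#\{P\}\,e^{-n(c-\epsilon)}\le e^{-n(2\Delta+2\epsilon)},
\]
which is summable in $n$. By Borel--Cantelli, $\nu$-a.e.\ $y\in G_\omega$ lies in only finitely many $E_n$. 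Taking $D(\omega,x)$ to be $G_\omega$ minus $\bigcup_{n\ge M}E_n$ for $M$ large (which costs at most $\epsilon^{1/4}$ more mass) yields \eqref{eq lower bound for local entropy}, with $\tau$ replaced by $\tau/2$; we then rename $\tau/2$ as $\tau$. Finally, $\epsilon_0$ is chosen so that $2\sqrt\epsilon<1/2$, which makes the binomial entropy bound valid. Measurability of $D$ follows from the measurability of $y\mapsto\nu(B_\omega(y,\tau/2,n))$.
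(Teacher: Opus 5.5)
Your proposal is correct. Steps 1 and 2 are the paper's argument: your $U_\tau$ is the paper's $V(\tau,\alpha)$, your $G_1$ and $G_2$ play the roles of $A_n(\omega)$ and $M_n(\omega,x)$, and your Step 2 is the paper's inclusion of $B_\omega(y,\tau,n)$ into the Hamming neighbourhood $\mathcal{N}_{n,\omega}(W_y)$ together with the count $\#\mathcal{N}_{n,\omega}(W)\le e^{(\Delta+\epsilon)n}$.

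The genuine difference is Step 3. The paper keeps the radius $\tau$ and works purely on cylinders. It calls a cylinder heavy if its $(\mu_\omega)_x$-mass exceeds $e^{(-c+2(\Delta+\epsilon))n}$; there are at most $e^{(c-2(\Delta+\epsilon))n}$ such cylinders. It then discards the set $\mathcal{M}_{n,\omega}$ of good points whose cylinder is Hamming-close to a heavy one. Such a point's own cylinder is light, with mass $\le e^{(-c+\epsilon)n}$, and lies among the at most $e^{(c-(\Delta+\epsilon))n}$ cylinders in the Hamming neighbourhoods of heavy ones. Hence $(\mu_\omega)_x(\mathcal{M}_{n,\omega})\le e^{-\Delta n}$. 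For a surviving point, every cylinder in $\mathcal{N}_{n,\omega}(W_y)$ is light, which bounds the Bowen ball.

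You instead discard directly the points whose Bowen ball of radius $\tau/2$ is heavy. You bound their mass by a bounded-overlap argument: balls of radius $\tau/2$ centred at representatives of distinct cylinders have multiplicity at most $e^{n\Delta}$, by the triangle inequality and Step 2. This controls the quantity of interest directly and gives a slightly better exponent, since any threshold $e^{-n(c-a)}$ with $a>\Delta+\epsilon$ is summable. The price is halving the radius and needing measurability of $y\mapsto(\mu_\omega)_x(B_\omega(y,\tau/2,n))$. That measurability holds because Bowen balls are open, so this function is lower semicontinuous. The paper's route needs no metric input beyond the single inclusion for good points.

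Carry out the two bookkeeping points you flagged as the paper does:
\begin{enumerate}
\item Do the Egorov step fiberwise, with $N_0=N_0(\omega)$ as in the paper's $l_0(\omega)$. This gives $\mu_\omega(\mathcal{L}_\omega)>1-\epsilon^{1/4}$ for $\mathbb{P}$-a.e. $\omega$, not just a global bound.
\item Choose the SMB threshold for each fixed $x$, as in the paper's $l_1(\omega,x)$, and intersect $G_\omega$ with $\xi_\omega(x)\cap\mathcal{Y}_\omega$. Then the cylinder bound is for $(\mu_\omega)_x$ itself.
\end{enumerate}
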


We postpone the proof of Lemma \ref{lemma lower bound for local entropy} and first finish the proof of Theorem \ref{thm local entropy}. We choose a sequence $\epsilon_k\in(0,\epsilon_0)$ decreasing to 0. Then there exist $\tau_k$, $\L_k$, and $D_k(\omega,x)$ for $x\in (\L_k)_\omega$ satisfying the conclusion of Lemma \ref{lemma lower bound for local entropy}. Replacing $\tau_k$ by $\min\{\tau_k,1/k\}$ if necessary, we may assume that $\tau_k<1/k$.
We consider $\overline{\L}=
\limsup_{n\to+\infty}\L_n=\cap_{N=1}^{+\infty}\cup_{k=N}\L_k$. Then $\mu(\overline{\L})=1$, and for \(\mu\)-a.e. \((\omega,x) \in \overline{\L}\), the point \(x\) belongs to \((\L_k)_\omega\) for infinitely many $k$. We denote the corresponding indices by \(k_1 < k_2 < \cdots\).
We consider $\overline{D}(\omega,x)=\limsup_{i\to+\infty} D_{k_i}(\omega,x)$. Then $(\mu_\omega)_x(\overline{D}(\omega,x))=1$ and
\begin{equation*}
	\underline{h}_{(\mu_\omega)_x}(F,\omega,y)\geq \lim_{i\to+\infty}	\liminf_{n\to+\infty}\frac{-\log(\mu_\omega)_x(B_\omega(y,\tau_{k_i},n))}{n}\geq c=H_\mu(\alpha_p|\mathcal{H}_p),\ \forall y\in \overline{D}(\omega,x),
\end{equation*}
Taking the intersection of the corresponding full-measure sets over $p\in\mathbb N$, we obtain a common full-measure set such that, for $\mu$-a.e. $(\omega,x)$,
\begin{equation*}
  \underline h_{(\mu_\omega)_x}(F,\omega,y)
\ge
\sup_{p\ge1}H_\mu(\alpha_p|\mathcal{H}_p)
\overset{\eqref{eq entropy approach by alpha and H}}{=}
h_\mu(F),\mbox{ for $(\mu_\omega)_x$-a.e. $y\in\E_\omega$}
\end{equation*}
which completes the proof of Theorem \ref{thm local entropy}.

\begin{proof}[Proof of Lemma \ref{lemma lower bound for local entropy}]
	Let $\epsilon_0$ be a sufficiently small number to be specified later, and let $\epsilon\in(0,\epsilon_0]$. For $r>0$ and $x\in\E_\omega$, we denote $B_\omega(x,r)=\{y\in\E_\omega:\ d(x,y)<r\}$, $V_\omega(r,\alpha)=\{x\in \E_\omega:\ B_\omega(x,r)\not\subset \alpha_\omega(x)\}$, and $V(r,\alpha)=\{(\omega,x):\ x\in V_\omega(r,\alpha)\}\in(\F\otimes\mathcal{B})\cap \E$. Since $\cap_{r>0}V_\omega(r,\alpha)=\partial \alpha_\omega$ for all $\omega\in\Omega$ and $\int\mu_\omega(\partial \alpha_\omega)d\P(\omega)=0$, it follows that
	\begin{equation*}
		\lim_{r\searrow 0}\mu_\omega(V_\omega(r,\alpha))=\mu_\omega(\partial \alpha_\omega)=0,\ \mbox{ for $\P$-a.e. $\omega\in\Omega$.}
	\end{equation*}
By the monotone convergence theorem, we have
	\begin{equation*}
		\lim_{r\searrow 0}\mu(V(r,\alpha))=\lim_{r\searrow 0}\int_\Omega\mu_\omega(V_\omega(r,\alpha))d\P(\omega)=0.
	\end{equation*}
We can choose $\tau>0$ such that $\int_\Omega\mu_\omega(V_\omega(r,\alpha))d\P(\omega)<\epsilon$ for all $r\in(0,\tau]$.
	For each $n\geq 1$, we define
		\begin{equation*}
		A_n(\omega)=\{y\in \E_\omega:\ \frac{1}{k}\sum_{i=0}^{k-1}1_{V(\tau,\alpha)}(\Theta^i(\omega,y))<2\sqrt{\epsilon},\ \forall k\geq n\}.
	\end{equation*}
	 By Birkhoff's ergodic theorem, since $\mu$ is ergodic, for $\mu$-a.e. $(\omega,y)\in\E$,
	\begin{equation*}
		\lim_{k\to+\infty}\frac{1}{k}\sum_{i=0}^{k-1}1_{V(\tau,\alpha)}(\Theta^i(\omega,y))=\mu(V(\tau,\alpha))<\epsilon.
	\end{equation*}
Thus, for $\P$-a.e. $\omega\in\Omega$, there is a smallest integer $l_0(\omega)\geq 1$ such that
	\begin{equation}\label{eq measure of A_l0}
		\mbox{$\mu_\omega(A_{n}(\omega))>1-2\sqrt{\epsilon}$ for all $n\geq l_0(\omega)$.}
	\end{equation}  We claim that $\omega\mapsto l_0(\omega)$ is measurable, since for any $j\geq 2$ the following set is measurable
	\begin{equation*}
		\begin{split}
			&\{\omega\in\Omega:\ l_0(\omega)=j\}\\
			=&\bigcap_{n\geq j}\{\omega\in\Omega:\ \mu_\omega(A_n(\omega))>1-2\sqrt{\epsilon}\}\cap\{\omega\in\Omega:\ \mu_\omega(A_{j-1}(\omega))\leq 1-2\sqrt{\epsilon}\}.
		\end{split}
	\end{equation*}
	For each $\omega\in\Omega$ for which $l_0(\omega)$ is defined, we define
	\begin{equation*}
		Q_{l_0(\omega)}(\omega):=\{x\in\E_\omega:\ (\mu_\omega)_x(A_{l_0(\omega)}(\omega))\geq 1-2\epsilon^{1/4}\}.
	\end{equation*}
Note that
	\begin{equation*}
		\begin{split}
		\mu_\omega(Q_{l_0(\omega)}(\omega)^c)\cdot 2\epsilon^{\frac{1}{4}}\leq & \int_{Q_{l_0(\omega)}(\omega)^c}(\mu_\omega)_x(A_{l_0(\omega)}(\omega)^c)d\mu_\omega\leq \int_{\E_\omega}(\mu_\omega)_x(A_{l_0(\omega)}(\omega)^c)d\mu_\omega\\
		=&\mu_\omega(A_{l_0(\omega)}(\omega)^c)\leq 2\sqrt{\epsilon}.
		\end{split}
	\end{equation*}
Thus, we have $\mu_\omega(Q_{l_0(\omega)}(\omega))>1-\epsilon^{1/4}$. We denote $Q_{l_0}:=\{(\omega,x):\ x\in Q_{l_0(\omega)}(\omega)\}$. Then $Q_{l_0}\in(\mathcal{F}\otimes\mathcal{B})\cap \E$ by the measurability of $l_0(\omega)$ and the measurability of $(\omega,x)\mapsto (\mu_\omega)_x$.
	
	By Corollary \ref{corollary random smb}, there exists a Borel measurable set $\mathcal{Y}\subset \E$ with $\mu(\mathcal{Y})=1$ such that for $(\omega,x)\in \Y$, $y\in\xi_\omega(x)\cap \Y_\omega$, one has $(\mu_\omega)_x(\xi_\omega(x)\cap \Y_\omega)=1$ and
	  \begin{equation}\label{eq corollary in proof}
		\lim_{N\to+\infty}\frac{-\log(\mu_\omega)_x\left(\vee_{k=0}^{N-1}(\Theta^{-k}\alpha)_\omega(y)\right)}{N}=H_\mu(\alpha|\mathcal{H})=c.
	\end{equation}
	Let $\L=\Y\cap Q_{l_0}$. This set is measurable, and $\mu_\omega(\L_\omega)>1-\epsilon^{1/4}$ for $\P$-a.e. $\omega\in\Omega$.
	
	From now on, fix $(\omega,x)\in \L$ and proceed with the proof of Lemma \ref{lemma lower bound for local entropy}.
 We define
	\begin{equation*}
		M_n(\omega,x)=\{y\in\xi_\omega(x)\cap \Y_\omega:\frac{-\log (\mu_\omega)_x(\vee_{i=0}^{k-1}(\Theta^{-i}\alpha)_\omega(y))}{k}\geq c-\epsilon\mbox{ for any }k\geq n\}
	\end{equation*}
By \eqref{eq corollary in proof}, we can find a smallest integer $l_1=l_1(\omega,x)\geq l_0(\omega)$ such that $(\mu_\omega)_x(M_{l_1}(\omega,x))\geq 1-\epsilon^{1/4}.$ Since $(\omega,x)\in\L$ is fixed, we write $l_1=l_1(\omega,x)$ for short. We define
	\begin{equation*}
		E(\omega,x)=A_{l_1}(\omega)\cap M_{l_1}(\omega,x).
	\end{equation*}
Since $x\in Q_{l_0(\omega)}(\omega)$ and $l_1\geq l_0(\omega)$, we have $(\mu_\omega)_x(A_{l_1}(\omega))\geq 1-2\epsilon^{1/4}$. At the same time, $(\mu_\omega)_x(M_{l_1}(\omega,x))\geq 1-\epsilon^{1/4}$ implies that $(\mu_\omega)_x(E(\omega,x))\geq 1-3\epsilon^{1/4}.$
	
	For any $n\geq 1$ and $W\in \vee_{i=0}^{n-1}(\Theta^{-i}\alpha)_\omega$, we define $\alpha_{\theta^i\omega}(F_\omega^i W)$ to be the unique atom in $\alpha_{\theta^i\omega}$ that contains $F_\omega^i W$ for each $0\leq i\leq n-1$. For $W_1,W_2\in \vee_{i=0}^{n-1}(\Theta^{-i}\alpha)_\omega$, we define the following pseudo-metric
	\begin{equation*}
		\rho_{n,\omega}(W_1,W_2)=\frac{1}{n}\#\{0\leq i\leq n-1,\ \alpha_{\theta^i\omega}(F_\omega^i W_1)\not =\alpha_{\theta^i\omega}(F_\omega^i W_2)\}.
	\end{equation*}
For $W\in\vee_{i=0}^{n-1}(\Theta^{-i}\alpha)_\omega $, we define the $2\sqrt{\epsilon}$-neighborhood of $W$ with respect to this pseudo-metric
\begin{equation}\label{eq def N n w}
	\N_{n,\omega}(W):=\{V\in \vee_{i=0}^{n-1}(\Theta^{-i}\alpha)_\omega:\ \rho_{n,\omega}(V,W)<2\sqrt{\epsilon}\}.
\end{equation}

We claim that
\begin{equation}\label{eq Ball is contained in N}
	B_\omega(y,\tau,n)\subset \bigcup\{V:\ V\in\N_{n,\omega}(\vee_{i=0}^{n-1}(\Theta^{-i}\alpha)_\omega(y)) \},\ \forall y\in E(\omega,x),\ n\geq l_1.
\end{equation}
If $z\in B_\omega(y,\tau,n)$, then for any $0\leq i<n$, either $F_\omega^i(y)$ and $F_\omega^i(z)$ belong to the same element of $\alpha_{\theta^i\omega}$, or $F_\omega^i(y)\in V_{\theta^i\omega}(\tau,\alpha)$, and when $y\in E(\omega,x)\subset A_{l_1}(\omega),$ $\frac{1}{n}\sum_{i=0}^{n-1}1_{V(\tau,\alpha)}(\Theta^i(\omega,y))<2\sqrt{\epsilon}$. Therefore, if $z\in B_\omega(y,\tau,n)$, then $\rho_{n,\omega}(\vee_{i=0}^{n-1}(\Theta^{-i}\alpha)_\omega(y),\vee_{i=0}^{n-1}(\Theta^{-i}\alpha)_\omega(z))<2\sqrt{\epsilon}$.

From the definition of \eqref{eq def N n w}, for $W\in \vee_{i=0}^{n-1}(\Theta^{-i}\alpha)_\omega$, we have
\begin{equation*}
	\#\N_{n,\omega}(W)\leq \sum_{i=0}^{m}\begin{pmatrix}
		n\\ i
	\end{pmatrix}(\#\alpha-1)^i\mbox{ for $m=\lceil 2n\sqrt{\epsilon}\rceil$.}
\end{equation*}
Stirling's formula (see (I.2) and (I.3) in \cite{Katok1980}) implies that, after choosing $\epsilon_0$ sufficiently small, there exists a constant $l_2=l_2(\epsilon,\alpha)\geq 1$ such that
\begin{equation}\label{eq number of Nnw}
	\#\N_{n,\omega}(W)\leq e^{(\Delta+\epsilon)n},\ \forall n\geq l_2, \ \forall \epsilon\in (0,\epsilon_0)
\end{equation}
where $\Delta$ is defined by \eqref{eq delta}.

Next, we choose $D(\omega,x)$ in the statement of Lemma \ref{lemma lower bound for local entropy} by removing certain ``bad sets" from $E(\omega,x)$. Set
\begin{equation*}
  \mathcal{P}_{n,\omega}:=\{V\in \vee_{i=0}^{n-1}(\Theta^{-i}\alpha)_\omega:\ (\mu_\omega)_x(V)>e^{(-c+2(\Delta+\epsilon))n}\},
\end{equation*}
and
\begin{equation*}
	\mathcal{M}_{n,\omega}:=\bigcup_{V\in \mathcal{P}_{n,\omega}}\{y\in E(\omega,x):\ \rho_{n,\omega}(\vee_{i=0}^{n-1}(\Theta^{-i}\alpha)_\omega(y),V)<2\sqrt{\epsilon}\}.
\end{equation*}
By the definition of $\mathcal{P}_{n,\omega}$, we have
\begin{equation}\label{eq number of Pnw}
	\#\mathcal{P}_{n,\omega}=\#\mathcal{P}_{n,\omega}\cdot 1\leq \sum_{V\in \mathcal{P}_{n,\omega}}(\mu_\omega)_x(V)e^{(c-2(\Delta+\epsilon))n}\leq e^{(c-2(\Delta+\epsilon))n}.
\end{equation}
To estimate the size of $\mathcal{M}_{n,\omega}$, we let
\begin{equation}\label{eq def Onw}
	\begin{split}
	\O_{n,\omega}:=&\bigcup_{W\in \mathcal{P}_{n,\omega}}\{V\in \mathcal{N}_{n,\omega}(W):\ (\mu_\omega)_x(V)<e^{(-c+\epsilon)n}\}\\
	\subset & \bigcup_{W\in \mathcal{P}_{n,\omega}} \mathcal{N}_{n,\omega}(W).
	\end{split}
\end{equation}
It follows from \eqref{eq number of Nnw} and \eqref{eq number of Pnw} that
\begin{equation}\label{eq number of Onw}
	\# 	\O_{n,\omega}\leq e^{(c-(\Delta+\epsilon))n},\ \forall n\geq l_2.
\end{equation}
By the definition of $\mathcal{M}_{n,\omega}$, for any $y\in \mathcal{M}_{n,\omega}\subset E(\omega,x)$, there exists $V\in \mathcal{P}_{n,\omega}$ such that $\rho_{n,\omega}(\vee_{i=0}^{n-1}(\Theta^{-i}\alpha)_\omega(y),V)<2\sqrt{\epsilon}$. Moreover, by the definition of $E(\omega,x)$, for any $n\geq l_1(\omega)$, one has $(\mu_\omega)_x(\vee_{i=0}^{n-1}(\Theta^{-i}\alpha)_\omega(y))\leq e^{(-c+\epsilon)n}$. Therefore,
\begin{equation*}
  \mathcal{M}_{n,\omega}\subset \bigcup_{W\in\O_{n,\omega}}W,\ \forall n\geq \max\{l_1(\omega),l_2\},
\end{equation*}
and by \eqref{eq def Onw} and \eqref{eq number of Onw},
\begin{equation*}
	(\mu_\omega)_x(\mathcal{M}_{n,\omega})\leq e^{-\Delta n},\ \forall n\geq \max\{l_1(\omega),l_2\}.
\end{equation*}
We can choose $l_2$ large enough so that whenever $n\geq l_3(\omega)=\max\{l_1(\omega), l_2\}$,
\begin{equation*}
	\sum_{n=l_3(\omega)}^{+\infty}	(\mu_\omega)_x(\mathcal{M}_{n,\omega})<\epsilon^{1/4}.
\end{equation*}
We define $D(\omega,x)=E(\omega,x)\backslash \cup_{n=l_3(\omega)}^{+\infty}	\mathcal{M}_{n,\omega}$. Then $(\mu_\omega)_x(D(\omega,x))>1-4\epsilon^{1/4}.$ Given $y\in D(\omega,x)$ and $n\geq l_3(\omega)$, for each $W\in \vee_{i=0}^{n-1}(\Theta^{-i}\alpha)_\omega$ with $\rho_{n,\omega}(W,\vee_{i=0}^{n-1}(\Theta^{-i}\alpha)_\omega(y))<2\sqrt{\epsilon}$, one must have $W\not\in \mathcal{P}_{n,\omega}$ and then $(\mu_\omega)_x(W)\leq e^{(-c+2(\Delta+\epsilon))n}$.

By \eqref{eq Ball is contained in N} and \eqref{eq number of Nnw}, for $y\in D(\omega,x)$ and $n\geq l_3(\omega)$, we have
\begin{equation*}
	(\mu_\omega)_x(B_\omega(y,\tau,n))\leq e^{(-c+3(\Delta+\epsilon))n},
\end{equation*}
which implies \eqref{eq lower bound for local entropy}. This completes the proof of Lemma \ref{lemma lower bound for local entropy}.
\end{proof}

\section{Lower bound for the Hausdorff dimension of local unstable sets}\label{section proof of thm main}

We assume $\mu\in \mathcal{I}_\P^e(\E)$ and $\log^+\lambda_1^{\delta^*}\in L^1(\mu)$ for some $\delta^*>0$. Fix a sequence $\delta_m\in (0,\delta^*]$ decreasing to $0$. Let $\E_1$ be a $\mu$-full-measure set on which \eqref{eq def fiber Lyapunov exponent} holds. Since $\mu$ is ergodic, $\chi_\mu(F)=\chi(\omega,x)<\infty$ on $\E_1$. We now complete the proof of Theorem \ref{Thm main} following the strategy described in the introduction.

To embed a metric ball in the fiber into a fiber Bowen ball, we need the following lemma, which allows the order of the limits in \eqref{eq def fiber Lyapunov exponent} to be interchanged. Lemma \ref{corollary interchangeable limits} is a direct corollary of Lemma \ref{lemma interchangeable limits}.
\begin{lemma}\label{corollary interchangeable limits}
  Assume that $\log^+\lambda_1^{\delta^*}\in L^1(\mu)$ for some $\delta^*>0$. Let $\{\delta_m\}$ be any sequence with $\delta_m\in (0,\delta^*]$ decreasing to $0$. Then the following equality holds $\mu$-a.e.:
  \begin{equation}\label{eq def fiber Lyapunov exponents on fibers}
    \chi(\omega,x)=\lim_{m\to+\infty}\Lambda^{\delta_m}(\omega,x),
  \end{equation}
where $\chi$ is defined in \eqref{eq def fiber Lyapunov exponent} and $\Lambda^{\delta_m}$ is defined in \eqref{eq def lambda delta}.
\end{lemma}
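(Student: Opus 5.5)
We will prove that $\chi(\omega,x)=\lim_{m}\Lambda^{\delta_m}(\omega,x)$ $\mu$-a.e. by a monotonicity sandwich, using only the definitions and Kingman's theorem; no further dynamical input should be needed.

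First, the limit exists. For each $n$ and $(\omega,x)$, $\delta\mapsto\lambda_n^\delta(\omega,x)$ is nondecreasing. Hence, $\mu$-a.e., $\Lambda^{\delta_m}(\omega,x)$ is nonincreasing in $m$ and bounded below by $0$. Each $\Lambda^{\delta_m}$ is defined and finite $\mu$-a.e. by Kingman's theorem, because $\log^+\lambda_1^{\delta_m}\le\log^+\lambda_1^{\delta^*}\in L^1(\mu)$. Intersecting countably many full-measure sets, the limit $\Lambda^0:=\lim_m\Lambda^{\delta_m}$ exists $\mu$-a.e. and is $\Theta$-invariant.

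Second, the lower bound $\chi\le\Lambda^0$. Since $\lambda_n\le\lambda_n^{\delta_m}$ for every $m$, we get $\frac1n\log^+\lambda_n\le\frac1n\log^+\lambda_n^{\delta_m}$. Letting $n\to\infty$ gives $\chi\le\Lambda^{\delta_m}$ for every $m$, and hence $\chi\le\Lambda^0$ a.e.

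Third, the reverse inequality $\Lambda^0\le\chi$, which is the main point. We pass through the integrals. By Kingman's theorem, $\int\Lambda^{\delta_m}\,d\mu=\inf_n\frac1n\int\log^+\lambda_n^{\delta_m}\,d\mu$, and likewise $\int\chi\,d\mu=\inf_n\frac1n\int\log^+\lambda_n\,d\mu$.

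Fix $n$. As $m\to\infty$, $\log^+\lambda_n^{\delta_m}\searrow\log^+\lambda_n$ pointwise. This uses the continuity and monotonicity of $\log^+$ and the definition $\lambda_n=\lim_{\delta\searrow0}\lambda_n^\delta$. The sequence is dominated by $\log^+\lambda_n^{\delta^*}$, which is integrable: subadditivity gives $\log^+\lambda_n^{\delta^*}\le\sum_{i<n}\log^+\lambda_1^{\delta^*}\circ\Theta^i$. One must be careful that subadditivity at scale $\delta^*$ may require the smaller scale inside a Bowen ball. If needed, we would use Kifer's subadditivity statement as given in the introduction. Monotone (dominated) convergence then gives $\int\log^+\lambda_n^{\delta_m}\,d\mu\to\int\log^+\lambda_n\,d\mu$.

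It follows that, for every $n$,
\[
\int\Lambda^0\,d\mu\le\inf_m\int\Lambda^{\delta_m}\,d\mu\le\inf_m\frac1n\int\log^+\lambda_n^{\delta_m}\,d\mu=\frac1n\int\log^+\lambda_n\,d\mu.
\]
Taking the infimum over $n$ yields $\int\Lambda^0\,d\mu\le\int\chi\,d\mu$.

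Combining with the pointwise inequality $\chi\le\Lambda^0$ and the fact that both functions are integrable, we get $\Lambda^0=\chi$ $\mu$-a.e. This is exactly the interchange of limits in \eqref{eq def fiber Lyapunov exponents on fibers}.

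The step most likely to be delicate is the third one, namely exchanging $\inf_m$ with $\inf_n$ through the integrals. The integral inequality above shows this goes through once the domination of $\log^+\lambda_n^{\delta_m}$ by an integrable function is secured. No ergodicity is needed: the argument works for any invariant $\mu$, because equality of integrals together with a pointwise inequality already forces a.e. equality.
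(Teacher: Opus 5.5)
Your argument is correct and follows essentially the paper's route. The paper obtains the lemma by applying the abstract interchange lemma (Lemma~\ref{lemma interchangeable limits}) to $\phi_n^m=\log^+\lambda_n^{\delta_m}$, which is subadditive in $n$, decreasing in $m$, and has $\phi_1^1\in L^1(\mu)$; your sandwich $\chi\le\Lambda^0$ together with $\int\Lambda^0\,d\mu\le\inf_n\frac1n\int\log^+\lambda_n\,d\mu=\int\chi\,d\mu$ (Kingman plus monotone convergence in $m$ for fixed $n$) in effect reproves that lemma for this family.

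Your hedge about subadditivity is unnecessary. For $y\in B_\omega(x,\delta,n+k)\setminus\{x\}$ one has $y\in B_\omega(x,\delta,n)$ and $F_\omega^n y\in B_{\theta^n\omega}(F_\omega^n x,\delta,k)\setminus\{F_\omega^n x\}$, so $\lambda^\delta_{n+k}(\omega,x)\le\lambda^\delta_n(\omega,x)\,\lambda^\delta_k(\Theta^n(\omega,x))$ holds at the same scale $\delta$.
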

\begin{lemma}\label{lemma embed ball into bowen ball}
  Given $\gamma>\chi_\mu(F)$ and any $(\omega,y)\in\E_1$, there exists $N_1(\omega,y,\gamma)\in\mathbb{N}$ such that, for any $m\geq N_1(\omega,y,\gamma)$, there exist $N=N(\omega,y,\gamma,m)\in\mathbb{N}$ and $N_0=N_0(\omega,y,\gamma,m,N)\in\mathbb{N}$ such that
  \begin{equation*}
    B_\omega(y,e^{-\gamma(n+N_0)})\subset B_\omega(y,\delta_m,n),\ \forall n\geq N(\omega,y,\gamma,m).
  \end{equation*}
\end{lemma}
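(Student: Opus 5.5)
The plan is to use Lemma \ref{corollary interchangeable limits} to choose the scale, then unfold the definition of $\lambda_k^{\delta_m}$ to control how fast a small ball can spread, and finally fix a constant $N_0$ that absorbs the finitely many early times.

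\textbf{Step 1: choose $m$ and $N$.} Fix $(\omega,y)\in\E_1$ and $\gamma>\chi_\mu(F)=\chi(\omega,y)$. We may shrink $\E_1$ by a null set so that \eqref{eq def fiber Lyapunov exponents on fibers} holds there. Then there is $N_1$ such that $\Lambda^{\delta_m}(\omega,y)<\gamma$ for all $m\geq N_1$. For such $m$, the definition \eqref{eq def lambda delta} gives $N=N(\omega,y,\gamma,m)$ with
\[
\log^+\lambda_k^{\delta_m}(\omega,y)<\gamma k \quad \text{for all } k\geq N .
\]
Hence $\lambda_k^{\delta_m}(\omega,y)\leq e^{\gamma k}$ for $k\geq N$. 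For the finitely many $k<N$, set $C:=\max_{1\leq k\leq N}\max\{1,\lambda_k^{\delta_m}(\omega,y)\}$; this is finite by the standing integrability and a.e.\ finiteness.

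\textbf{Step 2: choose $N_0$ and induct on $n$.} Choose $N_0$ with $e^{-\gamma N_0}C<\delta_m$ and $e^{-\gamma N_0}<\delta_m$. Let $z\in B_\omega(y,e^{-\gamma(n+N_0)})$ with $n\geq N$. We show by induction on $k=1,\dots,n$ that $z\in B_\omega(y,\delta_m,k)$.

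For $k=1$ this holds because $d(y,z)<e^{-\gamma N_0}<\delta_m$.

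Suppose $z\in B_\omega(y,\delta_m,k)$ with $k<n$. By definition of $\lambda_k^{\delta_m}$,
\[
d(F_\omega^k y,F_\omega^k z)\leq \lambda_k^{\delta_m}(\omega,y)\,d(y,z).
\]
If $k\geq N$, the right side is at most $e^{\gamma k}e^{-\gamma(n+N_0)}\leq e^{-\gamma N_0}<\delta_m$. If $k<N$, it is at most $C e^{-\gamma N_0}<\delta_m$. Either way $z\in B_\omega(y,\delta_m,k+1)$, and the case $k=n$ gives the claimed inclusion.

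\textbf{Main obstacle.} The delicate point is that $\lambda_k^{\delta_m}$ only controls points already inside the $k$-step Bowen ball. This is why the argument must be inductive rather than a single Lipschitz estimate. The real content therefore lies in Lemma \ref{corollary interchangeable limits}, which lets us pass from $\chi$, a limit with $\delta\to0$ taken first, to a fixed positive scale $\delta_m$ where the subadditive limit $\Lambda^{\delta_m}$ is already below $\gamma$.

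A technical check is that $\lambda_k^{\delta_m}(\omega,y)<\infty$ for $k<N$. I would handle this by intersecting $\E_1$ with the full-measure set where all $\log^+\lambda_k^{\delta^*}$ are finite; this set is full measure since each of these functions is integrable by subadditivity.
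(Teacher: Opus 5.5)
Your proof is correct and follows essentially the same route as the paper: interchange the limits via Lemma \ref{corollary interchangeable limits}, fix $N$ beyond which $\lambda_k^{\delta_m}(\omega,y)$ grows at a rate below $\gamma$, and then propagate membership in the Bowen ball inductively. The only difference is how you treat the first $N$ steps: you use the constants $\lambda_k^{\delta_m}(\omega,y)$ for $k<N$, which are finite a.e.\ by subadditivity and $\log^+\lambda_1^{\delta^*}\in L^1(\mu)$, at the cost of discarding one more null set; the paper instead picks $N_0$ from the continuity of $F_\omega^j$ for $j<N$, and works with the intermediate rates $(2\gamma+\chi)/3$ and $(\gamma+\chi)/2$ where you use the margin $e^{-\gamma N_0}$.
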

\begin{proof}[Proof of Lemma \ref{lemma embed ball into bowen ball}]
  By Lemma \ref{corollary interchangeable limits}, for any fixed $(\omega,y)\in\E_1$,
  \begin{equation*}
    \chi_\mu(F)=\chi(\omega,y)=\lim_{m\to+\infty}\lim_{n\to+\infty}\frac{1}{n}\log^+\lambda_n^{\delta_m}(\omega,y)<\infty.
  \end{equation*}
Thus, given $\gamma> \chi_\mu(F)$, there exists $N_1=N_1(\omega,y,\gamma)>0$ such that
  \begin{equation*}
    \lim_{n\to+\infty}\frac{1}{n}\log^+\lambda_n^{\delta_m}(\omega,y)<\frac{2\gamma+\chi(\omega,y)}{3},\ \forall m\geq N_1(\omega,y,\gamma).
  \end{equation*}
For $m\geq N_1(\omega,y,\gamma)$, we pick $N=N(\omega,y,\gamma,m)>N_1$ sufficiently large such that
  \begin{equation*}
    e^{-\frac{1}{2}(\gamma-\chi(\omega,y))n}<\delta_m,\ \forall n\geq N(\omega,y,\gamma,m),
  \end{equation*}
and
  \begin{equation*}
    \frac{1}{n}\log^+\lambda_n^{\delta_m}(\omega,y)<\frac{\gamma+\chi(\omega,y)}{2},\ \forall n\geq N(\omega,y,\gamma,m).
  \end{equation*}
Since $F_\omega^n$ is continuous in $x$ for any fixed $\omega$ and $n$, we can pick $N_0=N_0(\omega,y,\gamma,m,N)$ such that $B_\omega(y,e^{-\gamma(N+N_0)})\subset B_\omega(y,\delta_m,N)$. We prove by induction that $B_\omega(y,e^{-\gamma(n+N_0)})\subset B_\omega(y,\delta_m,n)$ for any $n\geq N(\omega,y,\gamma,m)$. 
 Suppose inductively that $B_{\omega}(y,e^{-\gamma(k+N_0)})\subset B_{\omega}(y,\delta_m,k)$ for some $k\geq N(\omega,y,\gamma,m)$. If $z\in B_{\omega}(y, e^{-\gamma(k+1+N_0)})$, then $\gamma>0$ gives
  $$ z\in B_{\omega}(y, e^{-\gamma(k+N_0)})\subset B_{\omega}(y,\delta_m,k).
  $$
  For $z\not= y$, the definition of $\lambda_k^{\delta_m}(\omega,y)$ gives
   \begin{equation*}
    \begin{aligned}
      d(F_\omega^k(z),F_\omega^k(y))  &\leq \lambda_k^{\delta_m}(\omega,y)d(z,y)\\
      &\leq e^{\frac{1}{2}(\gamma+\chi(\omega,y))k}\cdot e^{-\gamma(k+1+N_0)}\\
        & \leq e^{-\frac{1}{2}(\gamma-\chi(\omega,y))k}<\delta_m.
    \end{aligned}
  \end{equation*}
  The case $z=y$ is immediate. Therefore $z\in B_{\omega}(y,\delta_m, k+1)$. Induction proves that
  $$B_\omega(y,e^{-\gamma(n+N_0)})\subset B_\omega(y,\delta_m,n)$$ for all $n\geq N(\omega,y,\gamma,m)$.
\end{proof}
We continue the proof of Theorem \ref{Thm main}.
Recall that in Theorem \ref{thm local entropy} we constructed a countably generated measurable partition $\xi$ of $\E$. Let $\{(\mu_\omega)_x\}_{(\omega,x)\in\E_2}$ be the disintegration of $\mu$ over $\sigma(\xi\vee\F_\E)$, where $\E_2$ is a $\mu$-full-measure set on which this disintegration is defined. Given $\gamma>\chi_\mu(F)$, any $(\omega,y)\in\E_1$, and $m\geq N_1(\omega,y,\gamma)$, let $N=N(\omega,y,\gamma,m)\in\mathbb{N}$ and $N_0=N_0(\omega,y,\gamma,m,N)\in\mathbb{N}$ be as in Lemma \ref{lemma embed ball into bowen ball}. For $\P$-a.e. $\omega\in\Omega$, the set $(\E_1)_\omega$ has full $\mu_\omega$-measure and hence full $(\mu_\omega)_x$-measure for $\mu_\omega$-a.e. $x$.
For $(\mu_\omega)_x$-a.e. $y\in (\E_1)_\omega$, we have
\begin{equation*}
  \begin{split}
     \underline{d}_{(\mu_\omega)_x}(y)=& \liminf_{r\searrow 0}\frac{\log(\mu_\omega)_x(B_\omega(y,r))}{\log r}\geq \liminf_{n\to+\infty}\frac{-\log(\mu_\omega)_x(B_\omega(y,e^{-\gamma(n+N_0)}))}{\gamma(n+1+N_0)}\\
      \geq  &
     \liminf_{n\to+\infty}\frac{-\log(\mu_\omega)_x(B_\omega(y,\delta_m,n))}{\gamma(n+1+N_0)} = \liminf_{n\to+\infty}\frac{-\log(\mu_\omega)_x(B_\omega(y,\delta_m,n))}{\gamma \cdot n}.
  \end{split}
\end{equation*}
The first inequality follows from the following fact: For fixed $m$, put
$$
r_n:=e^{-\gamma(n+N_0)}.
$$
If $r_{n+1}\le r<r_n$, then
$$
B_\omega(y,r)\subset B_\omega(y,r_n)
$$
and
$$
-\log r\le-\log r_{n+1}
=\gamma(n+1+N_0).
$$
Consequently,
$$
\frac{-\log(\mu_\omega)_x(B_{\omega}(y,r))}{-\log r}
\ge
\frac{-\log(\mu_\omega)_x(B_{\omega}(y,r_n))}
{\gamma(n+1+N_0)}.
$$

Letting $m\to+\infty$, we obtain
\begin{equation*}
   \underline{d}_{(\mu_\omega)_x}(y) \geq \lim_{m\to+\infty} \liminf_{n\to+\infty}\frac{-\log(\mu_\omega)_x(B_\omega(y,\delta_m,n))}{\gamma \cdot n}=\frac{1}{\gamma}\cdot \underline{h}_{(\mu_\omega)_x}(F,\omega,y)=\frac{h_\mu(F)}{\gamma},
\end{equation*}
where the first equality follows from Theorem \ref{thm local entropy} (2). Since $(\mu_\omega)_x$ is concentrated on $\xi_\omega(x)\subset W_\omega^u(x,\delta)$, by the mass distribution principle, we have
\begin{equation*}
  \dim_H(W_\omega^u(x,\delta))\geq \frac{h_\mu(F)}{\gamma},\ \mbox{for } \P\mbox{-a.e. }\omega\in\Omega,\ \mu_\omega\mbox{-a.e. $x$}.
\end{equation*}
Consider the sequence $\gamma_q=\chi_{\mu}(F)+\frac1q$, where $q\in\mathbb{N}$. After intersecting the corresponding countable collection of full-measure sets, we have
\begin{equation*}
  \dim_H(W_\omega^u(x,\delta))\geq \frac{h_\mu(F)}{\chi_\mu(F)+\frac1q},\ \mbox{for every $q\in \mathbb{N}$}.
\end{equation*}
Letting $q\to+\infty$, we obtain
\begin{equation*}
  \dim_H(W_\omega^u(x,\delta))\geq \frac{h_\mu(F)}{\chi_\mu(F)},\ \mbox{for } \P\mbox{-a.e. }\omega\in\Omega,\ \mu_\omega\mbox{-a.e. $x$}.
\end{equation*}
This completes the proof of Theorem \ref{Thm main}. 

\section{Proof of Theorem \ref{theorem 4}}\label{sec thm 4}
In this section, we prove Theorem \ref{theorem 4}. Let $(\E,F)$ be a continuous bundle RDS and let $\mu\in\mathcal{I}_\P^e(\E)$. Suppose that $\log^+\lambda_1^{\delta^*}\in L^1(\mu)$ for some $\delta^*>0$. Then Lemma \ref{corollary interchangeable limits} applies, and
\begin{equation}\label{eq LE=0}
  \mbox{$\chi_\mu(F)=\chi(\omega,x)$ for $\mu$-a.e. $(\omega,x)\in\E$}
\end{equation}
holds. We assume that the conditional measures of $\mu$ have finite upper pointwise dimension, i.e.,
   \begin{equation}\label{eq upperpointdimfinite}
     \overline{d}_{\mu_\omega}(x):=\limsup_{r\searrow 0}\frac{\log \mu_\omega(B_\omega(x,r))}{\log r}<+\infty,\mbox{ for $\mu$-a.e. $(\omega,x)\in\E$}.
   \end{equation}
We now fix a typical point $(\omega,x)$ in a $\mu$-full-measure set on which \eqref{eq LE=0}, \eqref{eq upperpointdimfinite}, \eqref{eq def fiber Lyapunov exponents on fibers}, and \eqref{eq brin-katok} all hold.
For any $D>\overline{d}_{\mu_\omega}(x)$, there exists $r_D=r_D(\omega,x)\in (0,1)$ such that
\begin{equation}\label{eq rD}
  \mbox{when $\rho\in(0,r_D)$, one has $\mu_\omega(B_\omega(x,\rho))\geq \rho^D$.}
\end{equation} For any $\eta>\chi_\mu(F)$, by Lemma \ref{corollary interchangeable limits}, there exists $m_\eta=m_\eta(\omega,x)$ such that
\begin{equation}\label{}
  \mbox{ for any $m>m_\eta$, one has $\Lambda^{\delta_m}(\omega,x)\leq \eta.$ }
\end{equation}
For any $m>m_\eta$ and $\eta^\prime>\eta$, we then choose $N_{\eta^\prime,m}=N_{\eta^\prime,m}(\omega,x)$ such that
\begin{equation}\label{eq leq etaprimek}
  \mbox{ for any $k\geq N$, one has $\log^+\lambda_k^{\delta_m}(\omega,x)< \eta^\prime k$}.
\end{equation}
For any $m>m_\eta$, $\eta^\prime>\eta$, since $F_\omega^j$ is continuous on $\E_\omega$ for $j=0,...,N_{\eta^\prime,m}-1$, there exists $r_{\eta^\prime,m}=r_{\eta^\prime,m}(\omega,x)>0$ such that
\begin{equation}\label{eq force steps}
  \mbox{ $d(x,y)<r_{\eta^\prime,m}$ for $x,y\in\E_\omega$ implies $d(F_\omega^jx,F_\omega^jy)<\delta_m$ for all $0\leq j<N_{\eta^\prime,m}$.}
\end{equation}

Now for any $n> N_{\eta^\prime,m}$, we define
\begin{equation*}
 A_n^{\delta_m}= A_n^{\delta_m}(\omega,x):=\max_{N_{\eta^\prime,m}\leq k\leq n}\log^+\lambda_k^{\delta_m}(\omega,x).
\end{equation*}
By \eqref{eq leq etaprimek}, we have $ A_n^{\delta_m}(\omega,x)\leq \eta^\prime n$ for any $n\geq N_{\eta^\prime,m}$. Set
\begin{equation*}
  \rho_*=\rho_*(D,\eta^\prime,m,n):=\frac{1}{2}\min\{r_D,r_{\eta^\prime,m},\delta_me^{-A_n^{\delta_m}}\}.
\end{equation*}
We claim that $B_\omega(x,\rho_*)\subset B_\omega(x,\delta_m,n)$. Suppose, to the contrary, that there exists $y\in\E_\omega$ such that $d(x,y)<\rho_*$ but $y\not\in B_\omega(x,\delta_m,n)$. Let $1\leq k<n$ be the smallest integer such that $d(F_\omega^k x,F_\omega^ky)\geq \delta_m$. Then \eqref{eq force steps} forces $k\geq N_{\eta^\prime,m}$. Moreover, by the minimality of $k$, we have $y\in B_\omega(x,\delta_m,k)$. Therefore, $d(F_\omega^kx,F_\omega^k y)\leq \lambda_k^{\delta_m}(\omega,x)d(x,y)< e^{A_n^{\delta_m}}\cdot \delta_me^{-A_n^{\delta_m}}=\delta_m$, a contradiction.

Now, for any $n>N_{\eta^\prime,m}$, we have
\begin{equation*}
\begin{split}
  -\frac{1}{n}\log\mu_\omega(B_\omega(x,\delta_m,n))\leq  &  -\frac{1}{n}\log\mu_\omega( B_\omega(x,\rho_*))\overset{\eqref{eq rD}}{\leq} -\frac{D}{n}\log \rho_* \\
    \leq & -\frac{D}{n}\left(-\log 2+\max\{\log r_D,\log r_{\eta^\prime,m},-A_n^{\delta_m}+\log\delta_m\}\right).
\end{split}
\end{equation*}
Note that $A_n^{\delta_m}(\omega,x)\leq \eta^\prime n$ for any $n\geq N_{\eta^\prime,m}$. Therefore,
\begin{equation*}
  \limsup_{n\to+\infty}-\frac{1}{n}\log\mu_\omega(B_\omega(x,\delta_m,n))\leq D\eta^\prime.
\end{equation*}
Letting first $\eta'\searrow\eta$ and then $m\to+\infty$, we obtain
\begin{equation*}
  \lim_{\epsilon\to 0}\limsup_{n\to+\infty}-\frac{1}{n}\log\mu_\omega(B_\omega(x,\epsilon,n))=\lim_{m\to+\infty}\limsup_{n\to+\infty}-\frac{1}{n}\log\mu_\omega(B_\omega(x,\delta_m,n))\leq D\cdot\eta.
\end{equation*}
Since $\eta>\chi_\mu(F)$ is arbitrary, \eqref{eq brin-katok} gives $h_\mu(F)\leq D\cdot \chi_\mu(F)$. Since $D>\overline{d}_{\mu_\omega}(x)$ is also arbitrary, we obtain $h_\mu(F)\leq \overline{d}_{\mu_\omega}(x)\cdot \chi_\mu(F)$.

\section{Comparison with the classical maximal Lyapunov exponents}\label{section Lyapunov exponents}

In this section, we consider the case in which $X=M$ is a compact Riemannian manifold. Let $F$ be a $C^1$ RDS such that $F_\omega\in \mathrm{Diff}^1(M)$ for $\P$-a.e. $\omega$, and let $\mu\in \mathcal{I}_\P^e(\Omega\times M)$.
Assume the integrability condition $\log^+\|D_xF_\omega\|,\ \log^+\|D_xF_\omega^{-1}\|\in L^1(\Omega\times M,\mu)$.
By Oseledets' multiplicative ergodic theorem for RDSs (see \cite[Theorem 2.1]{Liu2001}), there is a $\Theta$-invariant measurable set $\triangle\subset \Omega\times M$ such that $\mu(\triangle)=1$ and, for $(\omega,x)\in\triangle$, there exist numbers depending on $(\omega,x)$,
\begin{equation*}
  -\infty<\gamma^1(\omega,x)<\cdots <\gamma^r(\omega,x)<+\infty
\end{equation*}
and an associated measurable splitting
\begin{equation*}
  T_xM=E^{1}(\omega,x)\oplus \cdots \oplus E^r(\omega,x)
\end{equation*} such that
\begin{equation*}
  \lim_{n\to+\infty}\frac{1}{n}\log \|D_xF_\omega^{\pm n}v\|=\pm\gamma^{i}(\omega,x),\mbox{ for $0\not=v\in E^i(\omega,x)$},\ 1\leq i\leq r.
\end{equation*}
\begin{theorem}\label{thm 3}
  Let $F$ be a $C^1$ RDS such that $F_\omega\in \mathrm{Diff}^1(M)$ for $\P$-a.e. $\omega$, and let $\mu\in \mathcal{I}_\P^e(\Omega\times M)$. Assume that the integrability condition $\log^+\|D_xF_\omega\|,\ \log^+\|D_xF_\omega^{-1}\|\in L^1(\mu)$ holds and that $\gamma^r>0$. Suppose that $\log^+\lambda_1^{\delta^*}\in L^1(\mu)$ for some $\delta^*>0$.  
  Let $\chi_\mu(F)=\chi(\omega,x)<\infty$ be defined on a $\mu$-full-measure set $\E_1$ as in \eqref{eq def fiber Lyapunov exponent}. Then
  \begin{equation}
    \chi(\omega,x)=\gamma^r(\omega,x),\ \mbox{for $\mu$-a.e. }(\omega,x)\in\Omega\times M.
  \end{equation}
\end{theorem}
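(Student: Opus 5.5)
We prove the two inequalities $\chi\le\gamma^r$ and $\chi\ge\gamma^r$ separately at $\mu$-typical points. Throughout we work in exponential charts on $M$: for $\delta$ below the injectivity radius, $\exp_{F^k_\omega x}^{-1}$ identifies a $\delta$-neighborhood of $F_\omega^kx$ with a ball in $T_{F^k_\omega x}M$, and distances are comparable up to a factor $1+o(1)$ as $\delta\to0$. We use that $\|D_xF_\omega^n\|$ gives the top exponent: $\lim_n\frac1n\log\|D_xF^n_\omega\|=\gamma^r(\omega,x)$ $\mu$-a.e., by Oseledets' theorem (or by Kingman applied to $\log\|D_xF_\omega^n\|$, together with the ergodicity of $\mu$).

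\emph{Lower bound} $\chi\ge\gamma^r$. For each $n$ and $\delta>0$, choose $v\in T_xM$ with $\|D_xF^n_\omega v\|=\|D_xF^n_\omega\|\,\|v\|$ and set $y_t=\exp_x(tv)$. For $t$ small enough (depending on $n,\delta$), $y_t\in B_\omega(x,\delta,n)$, and by differentiability of $F^n_\omega$ at $x$ we have $d(F^n_\omega x,F^n_\omega y_t)/d(x,y_t)\to\|D_xF^n_\omega\|$ as $t\to0$. Hence $\lambda^\delta_n(\omega,x)\ge\|D_xF^n_\omega\|$ for every $\delta$, so $\lambda_n(\omega,x)\ge\|D_xF^n_\omega\|$. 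Taking $\frac1n\log^+$ and using $\gamma^r>0$ gives $\chi\ge\gamma^r$. This direction is routine.

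\emph{Upper bound} $\chi\le\gamma^r$. This is the main obstacle, since $\lambda_n$ involves a limit over $\delta\to0$ taken for fixed $n$, while the Lipschitz constant of $F^n_\omega$ on a Bowen ball involves derivatives at all points $z$ near the orbit, not only at $x$. First note that, for fixed $n$, $\lambda_n(\omega,x)=\lim_{\delta\to0}\lambda_n^\delta(\omega,x)\le\limsup_{z\to x}\|D_zF^n_\omega\|$, by the mean value inequality on geodesic segments (small balls are geodesically convex), and this equals $\|D_xF^n_\omega\|$ by continuity of $z\mapsto D_zF^n_\omega$, since $F_\omega$ is $C^1$ for a.e. $\omega$. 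Thus $\lambda_n(\omega,x)\le\|D_xF_\omega^n\|$, and with the lower bound $\lambda_n=\|D_xF^n_\omega\|$ exactly. Then
\[
\chi(\omega,x)=\lim_n\tfrac1n\log^+\|D_xF^n_\omega\|=\max\{\gamma^r,0\}=\gamma^r .
\]
The delicate points to check are the following. The order of limits is harmless here because $\lambda_n$ is defined with $\delta\to0$ first at fixed $n$, so only the continuity of $D F^n_\omega$ at the single point $x$ is needed, with no uniformity in $n$. The mean value argument requires that $F^i_\omega$ map the segment into charts, which holds since $B_\omega(x,\delta,n)$ keeps all iterates within $\delta$. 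Finally, we intersect the Oseledets full-measure set $\triangle$ with $\E_1$ and with the set of $\omega$ where all $F_{\theta^i\omega}$ are $C^1$ (countably many conditions).

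If one prefers to avoid the exact identity, the fallback is the alternative route of Lemma \ref{corollary interchangeable limits}: bound $\lambda^\delta_n$ by products $\prod_i\sup_{z\in B(F^i_\omega x,\delta)}\|D_zF_{\theta^i\omega}\|$ and use the integrability of $\log^+\|DF\|$ with dominated convergence as $\delta\to0$. This yields, however, only the bound $\int\log\|D_xF_\omega\|\,d\mu$ rather than $\gamma^r$. The direct fixed-$n$ argument above is therefore the one to pursue.
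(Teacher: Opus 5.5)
Your proposal is correct and follows the paper's proof: both show $\lambda_n(\omega,x)=\|D_xF_\omega^n\|$ for each fixed $n$. The upper bound comes from the mean value inequality along the short geodesic from $x$ to $y$ together with continuity of $z\mapsto D_zF_\omega^n$ at $x$. The lower bound comes from the curve $y_t=\exp_x(tv)$. One then passes to $\gamma^r$ via $\log^+$ and $\gamma^r>0$. Your remarks about keeping intermediate iterates in charts, and the fallback route, are unnecessary; only $F_\omega^n$ near the single point $x$ is used.
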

\begin{proof}
  We prove the conclusion by showing that $\lambda_n(\omega,x)=\|D_xF_\omega^n\|$ for those $(\omega,x)\in\Omega\times M$ and $n\in \mathbb{N}$ for which $\chi(\omega,x)$ and $\gamma^r(\omega,x)$ are well-defined.

  We first show $\lambda_n(\omega,x)\leq \|D_xF_\omega^n\|$ for any fixed $(\omega,x)$ and $n\in\mathbb{N}$. For any fixed $\epsilon>0$, since $F_\omega^n\in \rm{Diff}^1(M)$, there exists a neighborhood $B_\omega(x,\delta^*(n))\subset M$ of $x$ such that
  \begin{equation*}
    \|D_zF_\omega^n\|\leq \|D_xF_\omega^n\|+\epsilon,\ \forall z\in B_\omega(x,\delta^*(n)).
  \end{equation*}
Now, for $y\in B_\omega(x,\delta,n)$ such that $\delta<\delta^*(n)$ and the $\delta$-neighborhood of $x$ in $M$ lies in a normal neighborhood of $x$, let $\Gamma:[0,1]\to M$ be the unique geodesic from $x$ to $y$, which stays inside $B_\omega(x,\delta^*(n))$. Then
  \begin{equation*}
    d(F_\omega^n(x),F_\omega^n(y))\leq \int_0^1\|D_{\Gamma(t)}F_\omega^n\|\cdot\|\Gamma^\prime(t)\|dt\leq (\|D_xF_\omega^n\|+\epsilon)\int_{0}^{1}\|\Gamma^\prime(t)\|dt=(\|D_xF_\omega^n\|+\epsilon)d(x,y).
  \end{equation*}
Taking the supremum over such $y$ and then letting $\delta\downarrow 0$ gives $\lambda_n(\omega,x)\leq \|D_xF_\omega^n\|+\epsilon$. Then we obtain $\lambda_n(\omega,x)\leq \|D_xF_\omega^n\|$ by letting $\epsilon\to 0.$

  We now show $\lambda_n(\omega,x)\geq \|D_xF_\omega^n\|$ for any fixed $(\omega,x)$ and $n\in\mathbb{N}$. Choose a unit vector $v\in T_xM$ such that $\|D_xF_\omega^n(v)\|\geq \|D_xF_\omega^n\|-\epsilon$. Define $y_t=\exp_x(tv)$. By the differentiability of $F_\omega^n$ in exponential coordinates at $x$, we have
  \begin{equation*}
    \lim_{t\to 0}\frac{d(F_\omega^n(x),F_\omega^n(y_t))}{d(x,y_t)}=\|D_xF_\omega^n(v)\|.
  \end{equation*}
For any $\delta>0$, we can choose $t$ small enough such that $y_t\in B_\omega(x,\delta,n)$ and
  \begin{equation*}
   \lambda_n^\delta(\omega,x)\geq  \frac{d(F_\omega^n(x),F_\omega^n(y_t))}{d(x,y_t)}\geq \|D_xF_\omega^n(v)\|-\epsilon\geq \|D_xF_\omega^n\|-2\epsilon.
  \end{equation*}
Taking $\delta\downarrow 0$ yields $\lambda_n(\omega,x)\geq \|D_xF_\omega^n\|-2\epsilon$. We obtain $ \lambda_n(\omega,x)=\|D_xF_\omega^n\|$ by letting $\epsilon \to 0.$
The conclusion follows.
\end{proof}
\begin{appendix}
\section{}\label{appendix}
The following lemma is Lemma 4.1 in \cite{Fengscichina2022}.
\begin{lemma}\label{lemma interchangeable limits}
Let $T$ be a measure-preserving transformation on a probability space $(X,\mathcal{X},\mu)$. Let $\{\phi_{n}^{m}\}$ be a family of nonnegative measurable functions on $X$ with $\phi_1^1\in L^1(\mu)$. Suppose that, for each $m$, $\phi^m_n$ is subadditive in $n$ with respect to $T$ and that, for each $n$, $\phi_n^m$ is decreasing in $m$. Then the following two iterated limits both converge $\mu$-a.e. and in $L^1(\mu)$, and the order of the limits can be exchanged:
\begin{equation*}
  \lim_{m\to+\infty}\lim_{n\to+\infty}\frac{1}{n}\phi_n^m=\lim_{n\to+\infty}\lim_{m\to+\infty}\frac{1}{n}\phi_n^m.
\end{equation*}
\end{lemma}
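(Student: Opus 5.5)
This interchange lemma is the random/abstract version of Lemma 4.1 in \cite{Fengscichina2022}. The plan is to sandwich both iterated limits between the same two quantities, using Kingman's theorem for the upper-left limit and monotone convergence for the lower-right one.

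First, existence. For each fixed $m$, the sequence $\{\phi_n^m\}_n$ is nonnegative and subadditive, and $\phi_1^m\le\phi_1^1\in L^1(\mu)$. Kingman's theorem therefore gives $\Lambda_m:=\lim_n\frac1n\phi_n^m$ $\mu$-a.e. and in $L^1(\mu)$, with
\[
\int\Lambda_m\,d\mu=\inf_n\frac1n\int\phi_n^m\,d\mu .
\]
Since $\phi_n^m$ is decreasing in $m$, $\Lambda_m$ is decreasing in $m$ and nonnegative. Hence $\Lambda:=\lim_m\Lambda_m$ exists a.e. and in $L^1(\mu)$ by monotone convergence.

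For the other order, set $\phi_n:=\lim_m\phi_n^m$, which exists by monotonicity. Subadditivity $\phi_{n+k}^m\le\phi_n^m+\phi_k^m\circ T^n$ passes to the limit in $m$. So $\{\phi_n\}$ is nonnegative subadditive with $\phi_1\in L^1(\mu)$, and Kingman gives $\psi:=\lim_n\frac1n\phi_n$ a.e. and in $L^1(\mu)$.

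Next, the easy inequality. Since $\phi_n\le\phi_n^m$ for every $m$, we get $\psi\le\Lambda_m$ for every $m$, and hence $\psi\le\Lambda$ a.e.

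The reverse inequality is the main obstacle: pointwise $\limsup$/$\liminf$ interchanges are not available. I will obtain it in integrated form. For fixed $n$,
\[
\int\Lambda\,d\mu\le\int\Lambda_m\,d\mu\le\frac1n\int\phi_n^m\,d\mu\quad\text{for every } m.
\]
Letting $m\to\infty$ and using dominated convergence (dominated by $\phi_n^1\in L^1(\mu)$, which is integrable by subadditivity, being at most $\sum_{i<n}\phi_1^1\circ T^i$) yields
\[
\int\Lambda\,d\mu\le\frac1n\int\phi_n\,d\mu .
\]
Taking the infimum over $n$ gives $\int\Lambda\,d\mu\le\int\psi\,d\mu$ by Kingman's formula for $\psi$. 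Combined with $\psi\le\Lambda$ a.e., this forces $\psi=\Lambda$ a.e.

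The convergence statements for the iterated limits then follow from the $L^1$ and a.e. convergences established above. In the application (Lemma \ref{corollary interchangeable limits}) one takes $\phi_n^m=\log^+\lambda_n^{\delta_m}$. This is decreasing in $m$ because $\delta_m$ decreases, it is subadditive by the cited result of Kifer, and $\phi_n=\log^+\lambda_n$, so $\psi=\chi$ and $\Lambda=\lim_m\Lambda^{\delta_m}$.
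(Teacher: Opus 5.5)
The paper does not prove this lemma; it quotes it as Lemma 4.1 of \cite{Fengscichina2022}. That lemma is already in this abstract form; the random specialization is Lemma \ref{corollary interchangeable limits}. So there is no in-paper argument to compare with, and your proposal gives a correct, self-contained proof.

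The structure is sound:
\begin{itemize}
\item the pointwise bound $\psi\le\Lambda$ a.e.;
\item the integrated bound $\int\Lambda\,d\mu\le\inf_n\frac1n\int\phi_n\,d\mu=\int\psi\,d\mu$, which you get from Kingman's infimum formula and dominated convergence in $m$ with the integrable majorant $\sum_{i<n}\phi_1^1\circ T^i$.
\end{itemize}
Together these force $\Lambda=\psi$ a.e. In effect you commute $\inf_m$ and $\inf_n$ at the level of integrals, which is exactly what cannot be done pointwise, and no ergodicity of $T$ is needed.

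One wording fix: the $L^1$ convergence of the decreasing sequence $\Lambda_m\to\Lambda$ comes from dominated convergence with majorant $\Lambda_1$, not from monotone convergence in its usual increasing form.
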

\end{appendix}
\subsection*{Acknowledgements}
This work was partially supported by the National Key R\&D Program of China (Nos. 2024YFA1013602, 2024YFA1013600),  the National Natural Science Foundation of China (Nos. 12401230,  12622113, 12471188, 12371197, 12426201), and the Jiangsu Provincial Scientific Research Center of Applied Mathematics (BK20233002).

\end{document}